\DeclareMathOperator{\Diff}{Diff}
\DeclareMathOperator{\interior}{Int}
\DeclareMathOperator{\spt}{spt}
\DeclareMathOperator{\Vol}{Vol}
\DeclareMathOperator{\Area}{Area}
\DeclareMathOperator{\divergence}{div}
\newtheorem{theo}{Theorem}[section]
\newtheorem{prop}[theo]{Proposition}
\newtheorem{lemme}[theo]{Lemma}
\newtheorem{definition}[theo]{Definition}
\newtheorem{coro}[theo]{Corollary}
\newtheorem{claim}[theo]{Claim}
\theoremstyle{definition}
\newtheorem{remarque}[theo]{Remark}
\newtheorem{exemple}[theo]{Example}
\begin{document}
\title[Minimal hypersurfaces in manifolds thick at infinity]
{A dichotomy for minimal hypersurfaces in manifolds thick at infinity}

\author[Antoine Song]{Antoine Song}
\address{Department of Mathematics, University of California, Berkeley, Berkeley, CA 94720, USA}
\email{aysong@berkeley.edu}
\thanks{The author was partially supported by NSF-DMS-1509027.}

\maketitle

\selectlanguage{english} 
\begin{abstract}
 Let $(M,g)$ be a complete $(n+1)$-dimensional Riemannian manifold with $2\leq n\leq 6$. Our main theorem generalizes the solution of S.-T. Yau's conjecture on the abundance of minimal surfaces and builds on a result of M. Gromov. Suppose that $(M,g)$ has bounded geometry, or more generally is thick at infinity.
Then the following dichotomy holds for the space of closed embedded hypersurfaces in $(M,g)$: 
either there are infinitely many saddle points of the $n$-volume functional, or there is none.

Additionally, we give a new short proof of the existence of a finite volume minimal hypersurface in finite volume manifolds, we check Yau's conjecture for finite volume hyperbolic 3-manifolds and we extend the density result due to Irie-Marques-Neves when $(M,g)$ is shrinking to zero at infinity.
\end{abstract}

\section{Introduction}

The search for minimal hypersurfaces in compact manifolds has enjoyed significant progress recently, thanks to the development of various min-max methods, such as the systematic extension of Almgren-Pitts' min-max theory \cite{P} led by Marques and Neves \cite{MaNeWillmore, MaNeinfinity,MaNeindexbound,MaNemultiplicityone}, the Allen-Cahn approach \cite{Guaraco,GasGua,ChodMant}, or others \cite{SacksUhlenbeck, Jost, C&Mextinction, Zhou2, Riviere, PigatiRiviere}. One central motivation was the following conjecture of S.-T. Yau: \newline

\textbf{Yau's conjecture } \cite{Yauproblemsection}: In any closed three-dimensional manifold, there are infinitely many minimal surfaces.
\newline

Strong results implying the conjecture were obtained for generic metrics by Irie-Marques-Neves \cite{IrieMaNe} (see \cite{MaNeSong} for a quantified version), Chodosh-Mantoulidis \cite{ChodMant}, X. Zhou \cite{Zhoumultiplicityone}, Y. Li \cite{yyLigeneric}. Concurrently to these results, the conjecture for non-generic metrics was treated with a different line of arguments. When the manifold satisfies the ``Frankel property'', it was solved by Marques-Neves in \cite{MaNeinfinity}. We recently settled the general case in \cite{AntoineYau}, where we localized min-max constructions appearing in \cite{MaNeinfinity} to some compact manifold with stable minimal boundary by introducing a non-compact manifold with cylindrical ends.

On the other hand, results about minimal hypersurfaces in complete non-compact manifolds are comparatively few and far between, and most of them are existence results. We give here a non-exhaustive list. In \cite{ColHauMazRos,ColHauMazRoserratum}, Collin-Hauswirth-Mazet-Rosenberg constructed a closed embedded minimal surface in any finite volume hyperbolic $3$-manifold; there is also the work of Z. Huang and B. Wang \cite{HuangWanghyperbolic}, and of Coskunuzer \cite{Coskunuzer}. In \cite{Montezuma1}, Montezuma showed that a strictly mean concave compact domain in a complete manifold intersects a finite volume embedded minimal hypersurface. In \cite{GromovPlateauStein}, Gromov proved the following existence theorem, which we interpret as the analogue of Almgren-Pitts existence result \cite{P} for non-compact manifolds:\newline

\textbf{Gromov's result }\cite{GromovPlateauStein}: In a complete non-compact manifold $M$, either there is an embedded finite volume complete minimal hypersurface, or there is a possibly singular strictly mean convex foliation of any compact domain of $M$.
\newline

In \cite{ChamLiok}, Chambers and Liokumovich showed the existence of a finite volume embedded minimal hypersurface in finite volume complete manifolds; in fact they proved the existence of such a minimal hypersurface if there is a region whose boundary is, say, ten times smaller than its width. In asymptotically flat $3$-manifolds, Chodosh and Ketover constructed minimal planes in \cite{ChodKet}, using a degree argument (see Mazet-Rosenberg \cite{MazetRosenbergplanes} for generalizations).

The goal of this paper is to propose a relevant generalization of the solution of Yau's conjecture to non-compact manifolds, by building on Gromov's result. Motivations came from our solution of the conjecture when the Frankel property is not satisfied \cite{AntoineYau}, where we perform min-max in a non-compact manifold with cylindrical ends. Moreover some classes of manifolds naturally contain non-compact manifolds, for instance finite volume hyperbolic $3$-manifolds. The non-compact situation substantially differs from the compact case: there are many non-compact manifolds without any closed (or finite volume) minimal hypersurfaces. For any integer $m>0$, it is easy to construct a metric on $\mathbb{S}^2\times\mathbb{R}$ for instance, with exactly $m$ closed minimal surfaces. That metric can look like a long tube which gets thinner around $ \mathbb{S}^2\times\{0\}$, and the minimal surfaces are $\mathbb{S}^2\times\{0\}$ and some other slices $\mathbb{S}^2\times\{t\}$ which are degenerate stable. At first sight, it seems hard to come up with essentially different examples of manifolds that would contain only finitely many closed minimal hypersurfaces. In our main result, we confirm this intuition for manifolds called ``thick at infinity'', which we define below.

\subsection*{The class $\mathcal{T}_\infty$ of manifolds thick at infinity} 
Before stating our main theorem, we recall the notion of ``thickness at infinity'' introduced by Gromov \cite{GromovPlateauStein}. 
Minimal hypersurfaces in this paper are all \emph{embedded} and unless mentioned, we consider hypersurfaces without boundary.
\begin{definition}
Let $(X^{n+1},g)$ be a complete $(n+1)$-dimensional Riemannian manifold. $(X,g)$ is said to be \textit{thick at infinity} (in the weak sense) if any connected finite volume complete minimal hypersurface in $(X,g)$ is closed.
We denote by $\mathcal{T}_\infty$ the class of manifolds that are thick at infinity.
\end{definition}
In \cite{GromovPlateauStein}, Gromov actually uses a slightly stronger notion of thickness at infinity, since he asks that any connected finite volume minimal hypersurface with maybe non-empty compact boundary is compact.

The property of ``thickness at infinity'' is checkable. Complete manifold with \textit{bounded geometry} are important examples of manifolds thick at infinity. Some other examples are given in \cite[Section 1.3]{GromovPlateauStein}, and the condition $\star_k$ in \cite{Montezuma1} also implies thickness at infinity. Easy special cases of the previous conditions include coverings of closed manifolds and asymptotically flat manifolds. 

Note that $M$ can be thick at infinity and at the same time ``thin'' in a certain sense. Indeed, using the monotonicity formula, it is easy to construct a warped product metric $g_t \oplus dt^2$ on a cylinder $N^n\times \mathbb{R}$ (where $N$ is any closed $n$-dimensional manifold) such that $(N^n\times \mathbb{R},g_t \oplus dt^2)$ is thick at infinity, has finite volume, and the $n$-volume of the cross section $N\times \{t\}$ decreases to zero as $t\to \pm\infty$.

\subsection*{A zero-infinity dichotomy for manifolds thick at infinity} 

Almost by definition, closed minimal hypersurfaces are critical points of the $n$-volume functional. By the properties of the Jacobi operator, which encodes the second variation of the $n$-volume at a minimal hypersurface, the space of deformations that do not increase the area at second order is finite dimensional. It is natural to define \textit{saddle points of the $n$-volume functional} (or simply \textit{saddle point minimal hypersurfaces}) as follows. Consider a connected closed embedded minimal hypersurface $\Gamma$. If it is $2$-sided then we call it a saddle point minimal hypersurface if there is a smooth family of hypersurfaces $\{\Gamma_t\}_{t\in(-\varepsilon,\varepsilon)}$ ($\varepsilon>0$) which are small graphical perturbations of $\Gamma=\Gamma_0$ so that $\{\Gamma_t\}_{t\in(-\varepsilon,0)}$ and $\{\Gamma_t\}_{t\in(0,\varepsilon)}$ are on different sides of $\Gamma$ and distinct from $\Gamma$, and 
$$\Vol_n(\Gamma) = \max_{t\in(-\varepsilon,\varepsilon)} \Vol_n(\Gamma_t).$$
If $\Gamma$ is $1$-sided, we call it a saddle point minimal hypersurface if its connected double cover is a saddle point minimal hypersurface in a double cover of the ambient manifold.
Note that if the metric is bumpy (i.e. no closed minimal hypersurface has a non-trivial Jacobi field), then saddle point minimal hypersurfaces are exactly unstable 2-sided closed minimal hypersurfaces and 1-sided closed minimal hypersurfaces with unstable double cover. By ``compact domain'', we mean a compact $(n+1)$-dimensional submanifold of $M$ with smooth boundary.

Our main theorem is a dichotomy for the space of closed hypersurfaces embedded in a manifold thick at infinity. It says that either this space has infinite complexity from a Morse theoretic point of view, or its structure is locally simple.

\begin{theo}  \label{mainthm}
Let $(M,g)$ be an $(n+1)$-dimensional complete manifold with $2\leq n\leq 6$, thick at infinity. Then the following dichotomy holds true:
\begin{enumerate}
\item either $(M,g)$ contains infinitely many saddle point minimal hypersurfaces,
\item or there is none; in that case for any compact domain $B$, there is an embedded closed area minimizing hypersurface $\Sigma_B$ such that $B\backslash \Sigma_B$ has a singular weakly mean convex foliation.

\end{enumerate}
\end{theo}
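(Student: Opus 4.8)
\emph{Overall strategy.} I would prove the dichotomy in its contrapositive form: assuming $(M,g)$ contains only finitely many saddle point minimal hypersurfaces, I would show that it contains none and that every compact domain has the structure described in~(2). The mechanism is a volume-spectrum/Lusternik--Schnirelmann argument in the spirit of Marques--Neves \cite{MaNeinfinity} and of \cite{AntoineYau}, with thickness at infinity playing the role that the ``stable minimal boundary'' plays in the localized min-max of \cite{AntoineYau}: together with the monotonicity formula it confines every finite volume complete minimal hypersurface produced by min-max to a fixed compact region.

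\emph{Step 1: reduction via Gromov's result.} If $M$ is non-compact, apply Gromov's dichotomy. If it yields a singular \emph{strictly} mean convex foliation of every compact domain, then the maximum principle forbids any closed (indeed any finite volume complete) minimal hypersurface, hence any saddle point, and conclusion~(2) holds with $\Sigma_B=\emptyset$. In the opposite case Gromov produces a finite volume complete embedded minimal hypersurface which, since $(M,g)\in\mathcal T_\infty$, is closed. So from now on, in both the compact and non-compact cases, $M$ carries a closed embedded minimal hypersurface and every later min-max minimal hypersurface will be closed and confined.

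\emph{Step 2: the volume spectrum.} Fix a compact domain $B$. Following \cite{AntoineYau}, attach product cylindrical ends to $B$ to form a complete manifold $\widehat M\in\mathcal T_\infty$ that agrees with $B$ on a large compact set (working directly in $M$ when that suffices), and consider its volume spectrum $0<\omega_1\le\omega_2\le\cdots\to\infty$. Since thickness at infinity and monotonicity prevent any loss of area into the ends, the Almgren--Pitts theory applies in the range $2\le n\le 6$: each $\omega_k$ is realized by a closed embedded minimal hypersurface lying in a fixed compact set, and after approximating $g$ by bumpy metrics $g_j\to g$ one obtains, for $g_j$, realization with multiplicity one \cite{MaNemultiplicityone, Zhoumultiplicityone} by a hypersurface $\Sigma_k^{(j)}$ of Morse index at most $k$ \cite{MaNeindexbound}.

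\emph{Step 3: the dichotomy, and the main obstacle.} Work with a bumpy $g_j$. If $\omega_k=\omega_{k+1}$ for some $k$, a Lusternik--Schnirelmann argument (cf.\ \cite{MaNeinfinity}) produces a continuum of minimal hypersurfaces realizing that width; a minimal hypersurface lying in such a continuum is non-isolated and, choosing the normal family of graphs along a path in the local moduli space so that area is non-increasing to one side, is a saddle point, so there are infinitely many and we are in case~(1). Hence the widths of $g_j$ are simple; then each $\omega_k$ is realized by a single multiplicity-one hypersurface $\Sigma_k^{(j)}$, and a pushing-off argument on the optimal sweepout shows that $\Sigma_k^{(j)}$ cannot be strictly stable, so it is a saddle point; since $\omega_k\to\infty$ these hypersurfaces cannot all have the same area, so $g_j$ has infinitely many distinct saddle points. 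Passing to the limit $g_j\to g$ (controlling the count via the confinement and the index bounds, and handling the possible collapse of degenerate critical points) shows that $g$ itself cannot have a positive finite number of saddle points: it has none. Finally, with no saddle points, the obstruction to a strictly mean convex foliation of a compact domain $B$ is a \emph{stable} minimal hypersurface; minimizing area in the appropriate relative class inside $B$ yields an embedded closed area-minimizing $\Sigma_B$, and, no minimal hypersurface now obstructing the construction, Gromov's foliation argument applied to each component of $B\setminus\Sigma_B$ produces the desired singular weakly mean convex foliation. I expect the hard part to be Step~3: converting the qualitative hypothesis ``finitely many saddle points'' into the quantitative statement that the volume spectrum behaves as if there were none, which requires combining the Lusternik--Schnirelmann dichotomy, the multiplicity-one and index estimates, the confinement coming from $\mathcal T_\infty$, and a careful passage through bumpy approximations; by contrast the Gromov-type foliation steps and the surrounding maximum-principle arguments are comparatively soft.
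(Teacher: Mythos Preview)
Your proposal has genuine gaps in Step~3, and the overall logical structure misses the key mechanism the paper uses.

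\textbf{The Lusternik--Schnirelmann/multiplicity-one argument does not go through as stated.} The argument from \cite{MaNeinfinity} that $\omega_k=\omega_{k+1}$ forces infinitely many minimal hypersurfaces relies on the Frankel property (any two closed minimal hypersurfaces intersect); without it, equality of consecutive widths yields nothing of the sort. In your setting $\widehat M$ has no reason to be Frankel. Likewise, the multiplicity-one theorem of \cite{Zhoumultiplicityone} is established for closed manifolds with bumpy metrics; you are invoking it on a manifold with cylindrical ends, where it is not available (and \cite{AntoineYau} deliberately avoids it, using an arithmetic lemma instead). Even granting multiplicity one, the claim ``simple widths $\Rightarrow$ each realizer is a saddle point'' is unjustified: a min-max varifold can have several components, and a single connected unstable component is not guaranteed without further structure. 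Finally, your limit passage $g_j\to g$ is the crux but is only gestured at: a sequence of saddle points for $g_j$ may well converge to a \emph{stable} hypersurface for $g$, so ``infinitely many for $g_j$'' does not transparently give ``not finitely many for $g$''.

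\textbf{What the paper does instead.} The paper proves ``one saddle point $\Rightarrow$ infinitely many'' (Theorem~\ref{oneinfinity}) rather than your contrapositive. Starting from a given saddle point $\Gamma$, it runs level set flow outward from a mean-concave neighborhood of $\Gamma$ to generate a manifold $X$ with locally area-minimizing boundary; the cylindrical ends are attached along $\partial X$, not along an arbitrary $\partial B$. If $\interior(X)$ contains a non-saddle minimal hypersurface intersecting $\Gamma$, one cuts along it and (via Theorem~\ref{b}) finds a new saddle point, iterating. Only after this reduction does one reach the situation where \emph{every} closed minimal hypersurface in $\interior(X)$ intersecting $\Gamma$ is a saddle point, which forces a Frankel-type property for such hypersurfaces. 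At that stage the modified widths $\tilde\omega_p(X,g)$ satisfy $\tilde\omega_{p+1}-\tilde\omega_p\ge\mathcal A$ and $\tilde\omega_p/p\to\mathcal A$, and the arithmetic lemma of \cite{AntoineYau} (no multiplicity-one needed) produces infinitely many distinct hypersurfaces. The ``zero'' alternative and the area-minimizing $\Sigma_B$ then follow from Theorem~\ref{b}, the refined local Gromov theorem whose proof already builds in the absence of saddle points. Your outline is missing both the cut-and-iterate step that manufactures the Frankel property and the correct width asymptotics that replace multiplicity one.
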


We make the following comments, which will be developed in Sections \ref{saddl}, \ref{sectiondichotomy} and \ref{hyperbolicyau}:

\begin{itemize}
\item In the second case, the minimal hypersurface $\Sigma_B \subset (M,g)$ is globally area minimizing in its $\mathbb{Z}_2$-homology class and may be empty or disconnected. The foliation comes from the mean curvature flow so it has the corresponding regularity \cite{WhiteregMCF}, and it is shrinking towards $\Sigma_B$. 
\item This theorem still holds if $M$ has minimal boundary and if each component of $\partial M$ is compact. Closed manifolds are trivially in $\mathcal{T}_\infty$. We will see in Section \ref{saddl} that $1$-parameter min-max produces a saddle point minimal hypersurface, thus Theorem \ref{mainthm} implies the existence of infinitely many saddle point minimal hypersurfaces in closed manifolds. This fact does not follow from the solution of Yau's conjecture \cite{MaNeinfinity}\cite{AntoineYau}. 
\item  Finite volume hyperbolic $3$-manifolds do not belong to $\mathcal{T}_\infty$ in general. Nevertheless we will show in Section \ref{hyperbolicyau} that they satisfy Yau's conjecture since they contain infinitely many saddle point minimal hypersurfaces, extending the existence result of Collin-Hauswirth-Mazet-Rosenberg \cite{ColHauMazRos,ColHauMazRoserratum}. 
\item The situation for geodesics in surfaces is different: some $2$-spheres contain only three simple closed geodesics, and immersed closed geodesics in hyperbolic surfaces are all strictly stable.
\end{itemize}

In the process of proving the Theorem \ref{mainthm}, we will explain the following local version of Gromov's result for complete manifolds that are not necessarily in $\mathcal{T}_\infty$. 

\begin{theo} \label{existence}
Let $(M,g)$ be an $(n+1)$-dimensional complete manifold with $2\leq n\leq 6$, and let $B$ be a compact domain. Then 
\begin{enumerate}
\item either $M$ contains a complete embedded minimal hypersurface intersecting $B$ and with finite $n$-volume,
\item or $B$ has a singular strictly mean-convex foliation.
\end{enumerate}
\end{theo}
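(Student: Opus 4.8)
The plan is to localize Gromov's argument by running a weak (Brakke / level-set) mean curvature flow ``through'' the domain $B$ and reading off the dichotomy from its long-time behaviour. Fix a compact domain $\Omega$ that is a sufficiently thin neighbourhood of $B$, generic enough that the level-set mean curvature flow $\{\Sigma_t\}_{t\geq 0}$ of $\partial\Omega$ does not fatten; at each time $t$ the flow then bounds a well-defined open region, and I track the increasing family $\{U_t\}$ of points of $\Omega$ already passed over by the inward-moving part of the flow. Where it is regular, $\{\partial U_t\}$ foliates $\bigcup_t \partial U_t$ by hypersurfaces whose mean curvature vector points into the still-unswept region; this is, after an arbitrarily small perturbation removing the zero set of the mean curvature, a strictly mean convex foliation, and its singular set is controlled by White's regularity theory for mean-convex flows \cite{WhiteregMCF}. (On the mean-concave part of $\partial\Omega$ the flow moves outward; this is harmless, as only the leaves reaching $B$ are used, and one may instead start from a barrier mean convex near $\bar B$.)

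Exactly one of two alternatives then occurs. If the flow eventually passes over all of $\bar B$, i.e. $\bar B \subset \bigcup_t U_t$, the restriction of $\{\partial U_t\}$ to $B$ is a singular strictly mean convex foliation of $B$, which is conclusion (2). Otherwise some point of $\interior B$ is never reached. Since the area of a mean curvature flow is non-increasing, $\Area(\Sigma_t) \leq \Area(\partial\Omega)$ for all $t$, so the total curvature integral over $[0,\infty)$ is finite and along a subsequence $t_j\to\infty$ the $\Sigma_{t_j}$ converge, with multiplicity, to a stationary integral varifold $V$ of mass at most $\Area(\partial\Omega)$. Because $n \leq 6$, $\spt V$ is a smooth embedded minimal hypersurface by the monotonicity formula and the Schoen--Simon regularity theory; being a smooth submanifold that is a closed subset of $M$, it is properly embedded, complete, and of finite $n$-volume. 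Since it separates an unreached point of $\interior B$ from the swept region, it meets $\interior B$; passing to a connected component --- and, if that component is $1$-sided, to the appropriate double cover --- gives conclusion (1).

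The main difficulty is carrying out the second alternative rigorously in the non-compact ambient manifold: one must rule out the flow near $\bar B$ spawning ever longer thin tendrils that would let mass escape to infinity, so that $\spt V$ stays bounded --- here $n \le 6$, the area bound, and the monotonicity formula together cap the number of sheets --- and one must ensure that the stuck hypersurface genuinely crosses $\interior B$ rather than merely surrounding $\bar B$; choosing $\Omega$ to be a thin neighbourhood of a \emph{generic} $B$ (so that no minimal hypersurface lurks in the collar $\Omega\setminus B$, forcing the flow to enter $\bar B$ before it can stall) takes care of this, and the general case follows by perturbing $B$ and passing to a limit. The remaining points --- the absence of fattening (arranged by perturbing $\Omega$), the regularity of the foliation in the first alternative via \cite{WhiteregMCF}, and the upgrade from weak to strict mean convexity of the leaves --- are by now standard. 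A $1$-parameter min-max variant, sweeping $\Omega$ by hypersurfaces that carry $\bar B$ from one side to the other and asking whether the min-max detects a nontrivial minimal hypersurface, would give the same dichotomy, with the same non-compactness phenomenon as the crux.
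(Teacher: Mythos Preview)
Your approach has a genuine gap at the very first step: $\partial\Omega$ has no reason to be mean convex. If $B$ is an arbitrary compact domain and $\Omega$ a thin neighbourhood of it, $\partial\Omega$ will in general have regions of both signs of mean curvature. Without mean convexity there is no monotone ``swept region'' $U_t$, White's regularity theory \cite{WhiteregMCF} does not apply, and the long-time limit of the flow need not be stable, hence need not be smooth via Schoen--Simon. You acknowledge that the mean-concave part moves outward and propose to track only the ``inward-moving part'', but this is not a well-defined operation on a weak flow; and ``one may instead start from a barrier mean convex near $\bar B$'' begs the question, since producing such a barrier is precisely the difficult step. The paper produces it by a constrained Plateau problem: inside a large domain $D$ (with the metric modified near $\partial D$ to make it mean convex), minimize $\Vol_n(\partial B')$ over domains $B'\supset B$ reachable from $B$ by an $\mathbf{F}$-continuous, area-controlled deformation. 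The minimizer $B''$ then has $C^{1,1}$ weakly mean convex boundary, and only from a strictly mean convex component of $\partial B''$ is the level-set flow run.

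Even granting a mean convex starting surface, there is a second gap. The flow may stall at a stable minimal hypersurface lying entirely in $B''\setminus B$, and nothing forces it to meet $B$. Your ``generic $B$'' fix is not rigorous: there is no a priori reason a small perturbation of $B$ clears the collar of minimal hypersurfaces --- you are, after all, trying to produce them. In the paper, when $\partial B''$ is already smooth minimal and disjoint from $B$, mean curvature flow is useless (it stays put), and the argument switches to the $1$-parameter min-max you mention only in your last sentence: Proposition~\ref{nonbumpy} is applied inside a carefully constructed ``core'' $\mathbf{B}\subset B''$ obtained by removing a maximal collection of $1$-sided and non-separating $2$-sided minimal hypersurfaces, and a volume-minimality argument then forces the resulting index-$\le 1$ hypersurface to intersect $B$. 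Finally the paper lets $D$ exhaust $M$ and extracts a limit via \cite{Sharp} to obtain a complete finite-volume hypersurface for the original metric; this exhaustion is what handles the non-compactness you flag as ``the main difficulty''.
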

This theorem is essentially already contained in \cite{GromovPlateauStein}, but we find it useful for the reader to present a detailed proof with some new arguments, for instance the use of Marques-Neves lower index bound \cite{MaNeindexbound} and the mean curvature flow. We think that modulo some improvements, our proof should also cover higher dimensions (the minimal hypersurface then may have a codimension at least $7$ singularity set). 

Let us list a few corollaries (see Corollary \ref{explanation} for more details). If $M$ has finite volume, then a complete finite volume embedded minimal hypersurface exists; this result  was first proved by Chambers-Liokumovich \cite{ChamLiok}. Moreover, if there is a compact subset $X\subset M$ whose boundary is mean concave in the sense that the mean curvature vector is pointing outside of $X$, then such a hypersurface also exists and intersects $X$. In particular this recovers a result of Montezuma \cite{Montezuma1}.

\subsection*{A density result \`{a} la Irie-Marques-Neves}

A complete $(n+1)$-dimensional manifold $(M,g)$ is said to have a \textit{thin foliation at infinity} if there is a proper Morse function $f:M\to [0,\infty)$ so that the $n$-volume of the level sets $f^{-1}(t)$ converges to zero as $t$ goes to infinity. The relevant topology on the space of complete metrics on $M$ is the strong (Whitney) $C^\infty$-topology. Let $\mathcal{F}_{\mathrm{thin}}$ be the family of complete metrics on $M$  with a thin foliation at infinity; it is an open subset for the strong topology. Similarly, the intersection $\mathcal{F}_{\mathrm{thin}} \cap \interior(\mathcal{T}_\infty)$ is a non-empty open subset for that topology (here $\interior(\mathcal{T}_\infty)$ denotes the interior of $\mathcal{T}_\infty$ in the space of complete metrics). The following theorem generalizes the density theorem of Irie, Marques and Neves to these metrics:

\begin{theo}
Let $M$ be an $(n+1)$-dimensional manifold with $2\leq n\leq 6$. 
\begin{enumerate}
\item For any metric $g$ in a $C^\infty$-dense subset of $\mathcal{F}_{\mathrm{thin}}$, the union of complete finite volume embedded minimal hypersurfaces in $(M,g)$ is dense.
\item For any metric $g'$ in a $C^\infty$-generic subset of $\mathcal{F}_{\mathrm{thin}} \cap \interior(\mathcal{T}_\infty)$, the union of closed embedded minimal hypersurfaces in $(M,g')$ is dense.
\end{enumerate}
\end{theo}

The proof borrows an idea of Irie, Marques and Neves in \cite{IrieMaNe}, where they use an elegant argument based on the Weyl law for the volume spectrum proved by Liokumovich, Marques and Neves \cite{LioMaNe}.  Many non-compact manifolds of finite volume do not obey the Weyl law, even if all the min-max widths are finite (see Remark \ref{weird} for an informal justification).
Thus, we have to find a more robust property of the min-max widths which in fact gives an alternative argument even in the compact case, not based on the Weyl law. On the other hand, the Weyl law seems essential in the quantified result we obtained with Marques and Neves \cite{MaNeSong} about the generic equidistribution of a sequence of minimal hypersurfaces. Another remark is that we cannot prove the result for a $C^\infty$-generic subset of  $\mathcal{F}_{\mathrm{thin}}$, but only for a $C^\infty$-dense subset, because non-compact minimal hypersurfaces may appear and structural results like White's bumpy metric theorems \cite{Whitebumpy,Whitebumpy2} become false.

\subsection*{Organisation}
In Section \ref{locex}, we reprove a local version of Gromov's result \cite{GromovPlateauStein} and derive a few corollaries. After explaining how to construct saddle point minimal hypersurfaces with $1$-parameter min-max in Section \ref{saddl}, we show the zero-infinity dichotomy for manifolds thick at infinity in Section \ref{sectiondichotomy}. We also check that finite volume hyperbolic $3$-manifolds satisfy Yau's conjecture in Section \ref{hyperbolicyau}, and extend in the last section the density result of Irie-Marques-Neves.

\subsection*{Acknowledgement} 
I am grateful to my advisor Fernando Cod\'{a} Marques for his crucial guidance. I thank Yevgeny Liokumovich for explaining \cite{ChamLiok} to me and mentioning \cite{GromovPlateauStein}, \cite{PapaSwen}. I am thankful to Misha Gromov for exchanges about \cite{GromovPlateauStein}. I also want to thank Franco Vargas Pallete for discussing with me Yau's conjecture for finite volume hyperbolic $3$-manifolds, a result of which he was also aware. Moreover, a very careful reading by the referees improved the writing of this article.

\section{Local existence of finite volume minimal hypersurfaces} \label{locex}



Let $(M^{n+1},g)$ be a complete possibly non-compact $(n+1)$-dimensional Riemannian manifold. We will use in this section a local version of Almgren-Pitts theory; definitions and some relevant results are stated in Appendix B. We will also need the mean curvature flow, in its level set flow formulation, whose basic properties are recalled in Appendix C. Strict and weak mean convexity in the sense of level set flow are also defined there. By ``compact domain'', we mean a compact $(n+1)$-dimensional submanifold of $M$ with smooth boundary. 

We say that $B$ has a \textit{singular strictly mean convex foliation} if there is a compact Riemannian manifold $(B_0,g')$ with $C^{1,1}$ boundary containing isometrically $(B,g)$ so that there is a family of closed subsets of $B_0$, $\{K_t\}_{t\in[0,1]}$, satisfying:
\begin{itemize}
\item $K_0=B_0$, $K_1\cap B=\varnothing$, $K_{t'}\subset \interior(K_{t})$ if $t'>t$,
\item  each $K_t$ is an integral $n+1$-current and $\{\partial K_t\}$ yields a family of cycles in $\mathcal{Z}_n(B_0;\mathbb{Z}_2)$ continuous in the flat topology (see Appendix B-C), 
\item for all $t$ $\partial K_t$ is strictly mean convex for $g'$ in the sense of mean curvature flow (see Appendix C),
\item for any $t\in(0,1)$, if $\partial K_t$ is smooth then $\{\partial K_t\}_{t\in[t-\varepsilon,t+\varepsilon]}$ is a smooth foliation for some small $\varepsilon>0$, and if $\partial K_t$ is not smooth, then $\{K_t\}_{t\in[t-\varepsilon,t+\varepsilon]}$ is a level set flow for some small $\varepsilon$ (see Appendix C).
\end{itemize}
The last bullet will be automatically satisfied in our constructions and will somewhat clarify the proofs.

The main theorem of this section, which will be useful in subsequent sections, is essentially proved in \cite{GromovPlateauStein}. We give here a detailed proof with new arguments, especially for the min-max part. Another more formal difference is the alternative use of level set flow instead of constructing foliations ``by hand'' (already suggested by B. Kleiner, see \cite{GromovPlateauStein}). Here $M$ can be compact or non-compact.

\begin{theo}[Local version of Gromov's theorem]\label{a}
Let $(M,g)$ be an $(n+1)$-dimensional complete manifold with $2\leq n\leq 6$, and let $B$ be a compact domain. Then 
\begin{enumerate}
\item either $M$ contains a complete embedded minimal hypersurface intersecting $B$ and with finite $n$-volume,
\item or $B$ has a singular strictly mean-convex foliation.
\end{enumerate}
\end{theo}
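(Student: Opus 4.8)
The plan is to run a one-parameter min-max inside the compact domain and then decide, according to whether the min-max width is positive or zero, which alternative of the dichotomy holds. First I would enlarge $B$ slightly to a compact domain $B_0$ with smooth boundary, and consider the set of competitors given by the one-sweepout Almgren-Pitts min-max problem relative to $\partial B_0$ (as recalled in Appendix B), sweeping $B_0$ out by hypersurfaces from $\partial B_0$ to $\varnothing$. Let $W = W(B_0)$ be the associated width. There are two cases. If $W > 0$, the local min-max theory (Appendix B, combined with the regularity theory of Almgren-Pitts-Schoen-Simon, valid in the dimension range $2 \le n \le 6$) produces a nonzero stationary integral varifold whose support is a smooth embedded minimal hypersurface $\Gamma$ meeting the interior of $B_0$; a key point is that the min-max minimal hypersurface has multiplicity-weighted area equal to $W$ and, by the Marques-Neves lower index bound, index at most one, so it is genuinely there and not pushed off to the boundary. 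Since $M$ is complete, each connected component of $\Gamma$ that meets $B$ is a complete embedded minimal hypersurface; I would then argue (monotonicity, or the fact that the limiting varifold has finite mass) that the component meeting $B$ has finite $n$-volume. That gives alternative (1).

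If instead $W = 0$, I would extract from the (almost) optimal sweepouts a way to foliate $B_0$ by weakly mean convex slices, and then perturb to strict mean convexity. Concretely: $W = 0$ means there are sweepouts of $B_0$ by hypersurfaces of arbitrarily small $n$-volume. I would feed the outermost such slices into the level set flow (mean curvature flow in the sense of Appendix C) starting from $\partial B_0$: because there is no stationary varifold of positive mass "in the way" (as $W=0$), the level set flow starting at $\partial B_0$ sweeps out all of $B_0$ and in particular clears $B$ in finite time, $K_1 \cap B = \varnothing$, while each $\partial K_t$ has the regularity of the level set flow and is weakly mean convex by the avoidance/comparison principle. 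To upgrade to a \emph{strictly} mean convex foliation as required by the definition, I would conformally perturb the metric on $B_0$ (keeping it equal to $g$ on $B$, which is allowed since the definition permits changing the metric on $B_0 \setminus B$), or equivalently run the flow for a slightly perturbed metric; the mean curvature of the moving slices becomes strictly positive. This yields alternative (2).

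The main obstacle is the interface between the two alternatives and the regularity/compactness at the "$W=0$" boundary: one must show that $W=0$ genuinely forces the mean curvature flow from $\partial B_0$ to exhaust $B_0$ (equivalently, that it does not get stuck on a minimal hypersurface inside), and conversely that $W>0$ genuinely yields a minimal hypersurface touching $B$ rather than escaping to infinity or collapsing. Handling non-compactness of $M$ is exactly why we localize: the competitors all live in the fixed compact $B_0$, so compactness of the min-max and of the level set flow is not an issue, but I must be careful that the minimal hypersurface produced, while of finite volume inside any compact set, is a priori only \emph{complete} in $M$ — its global finiteness of $n$-volume uses that the limiting varifold from min-max has finite total mass and that stationarity plus monotonicity propagates area bounds, which is the delicate point to pin down. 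A secondary technical point is ensuring the index bound and the fact that the min-max hypersurface is not entirely contained in $\partial B_0$; this is where the argument genuinely needs $n \le 6$ and the Marques-Neves index estimate, as advertised in the paragraph following the statement.
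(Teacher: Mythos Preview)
Your dichotomy $W>0$ versus $W=0$ is vacuous: the width of any compact domain of positive $(n+1)$-volume is always strictly positive (any sweepout has a slice bisecting the volume, which has area bounded below by an isoperimetric constant). So your argument never reaches alternative~(2). Conversely, in your $W>0$ branch you invoke local min-max in $B_0$, but the local min-max theorem you cite (Appendix~B, Proposition~\ref{nonbumpy} and Theorem~\ref{discreteminmax}) requires $\partial B_0$ to be a locally area-minimizing minimal hypersurface; for an arbitrary compact domain $B_0$ there is nothing to prevent the min-max varifold from sitting on $\partial B_0$, and then ``each connected component of $\Gamma$ that meets $B$ is a complete embedded minimal hypersurface'' fails --- you only get a free-boundary object in $B_0$, not a complete hypersurface in $M$.

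The paper resolves both issues simultaneously by a different mechanism. It first embeds $B$ in a large domain $D$ (with metric perturbed near $\partial D$ to be mean convex), then solves a \emph{constrained Plateau problem} enclosing $B$: minimize the area of $\partial B'$ over sets $B'\supset B$ reachable by an $\mathbf{F}$-continuous area-controlled deformation of $\partial B$. The minimizer $B''$ has $C^{1,1}$ weakly mean convex boundary. If some component of $\partial B''$ is strictly mean convex it must touch $B$, and the level set flow from it either foliates $B$ (alternative~(2)) or converges to a stable minimal hypersurface intersecting $B$. If instead $\partial B''$ is entirely minimal, one passes to a minimal-volume such $B''$, cuts along finitely many one-sided or non-separating minimal hypersurfaces to obtain a ``core'' $\mathbf{B}$ with locally area-minimizing boundary, and \emph{only then} runs min-max inside $\mathbf{B}$ via Proposition~\ref{nonbumpy}; the volume-minimality of $\mathbf{B}$ forces the resulting index-$\le 1$ surface to intersect $B$. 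Finally --- and this is another step absent from your proposal --- one lets $D$ exhaust $M$ and passes to a limit (via Sharp's compactness) of these uniformly controlled surfaces to obtain the complete finite-volume minimal hypersurface in $(M,g)$; the exhaustion is what guarantees the limit is minimal for the original metric $g$ rather than the perturbed one near $\partial D$.
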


That theorem immediately implies the following corollaries which were proved in \cite{ChamLiok} and \cite{Montezuma1} respectively (it seems that the relation with \cite{GromovPlateauStein} was not in the literature).
\begin{coro}\label{explanation}
Let $M$ be as in the previous theorem. 
\begin{enumerate}
\item If $M$ has finite volume or more generally if there is an exhaustion $X_1\subset ...\subset X_i\subset...$ of $M$ be compact subsets with smooth boundaries such that 
$$\lim_{i\to \infty}\Vol_n(\partial X_i)=0,$$
then there is a complete finite volume embedded minimal hypersurface.
\item If $M$ contains a compact subset $X$ with mean concave smooth boundary (the mean curvature vector is non-zero pointing outwards), then there a complete finite volume embedded minimal hypersurface intersecting $X$.
\end{enumerate}
\end{coro}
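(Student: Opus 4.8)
The plan is to deduce both parts of Corollary~\ref{explanation} from Theorem~\ref{a} by ruling out its second alternative in each case, arguing by contradiction. If the asserted complete finite volume embedded minimal hypersurface did not exist, then for the relevant compact domain $B$ ($B=X$ in part~(2), a suitable $X_i$ in part~(1)) Theorem~\ref{a} would supply a larger domain $B_0\supseteq B$, a metric $g'$ equal to $g$ on $B$, and a family $\{K_t\}_{t\in[0,1]}$ of closed subsets of $B_0$ with $K_0=B_0$, $K_1\cap B=\varnothing$, each $\partial K_t$ of level-set-flow regularity and strictly mean convex (mean curvature vector pointing into $K_t$). The contradiction will come from the strong maximum principle / avoidance principle for mean-convex level set flow.

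Part~(2) is the cleaner one: take $B=X$, assume such a foliation exists, and set $t_0=\inf\{t:\ X\not\subseteq K_t\}$. Since $\varnothing\neq X\subseteq K_0$ and $X\cap K_1=\varnothing$, one has $t_0\in(0,1]$, $X\subseteq K_{t_0}$, and $\partial K_{t_0}$ must touch $\partial X$ at some point $p$ (otherwise $X$ would lie at positive distance inside $K_{t_0}$, hence inside $K_t$ for $t$ slightly larger, contradicting the choice of $t_0$). Near $p$, $X$ lies on the $K_{t_0}$-side of $\partial K_{t_0}$; comparing the two hypersurfaces as graphs over their common tangent hyperplane at $p$ and inspecting second derivatives, the mean curvature of $\partial K_{t_0}$ with respect to the normal pointing into $K_{t_0}$ is at most the mean curvature of $\partial X$ with respect to the same (inward-into-$X$) normal --- but the former is positive by strict mean convexity, while the latter is negative because by hypothesis the mean curvature vector of $\partial X$ points out of $X$, a contradiction. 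With only level-set-flow regularity this is cleanest to phrase as: the static mean-concave hypersurface $\partial X$ is a barrier that the shrinking flow $\{K_t\}$ cannot cross (using that $g'$ may be taken equal to $g$ near $\partial X$), contradicting $X\subseteq K_0$ together with $X\cap K_1=\varnothing$. Hence no such foliation exists, Theorem~\ref{a}(1) applies, and the resulting hypersurface intersects $B=X$ by construction.

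For part~(1), I would first reduce the finite volume case to the exhaustion hypothesis (the compact case being immediate from Almgren--Pitts): a proper exhaustion function and the coarea formula, with $\Vol(M,g)<\infty$, produce compact domains $X_i$ with smooth boundaries exhausting $M$ and $\Vol_n(\partial X_i)\to 0$, with $X_1\neq\varnothing$. Writing $\omega_1(\,\cdot\,,g)$ for the relative one-parameter min-max width, we have $\omega_1(X_1,g)>0$ (a sweepout of the compact $(n+1)$-dimensional domain $X_1$ contains a slice enclosing half of $\Vol(X_1,g)$, whose $n$-volume is at least the isoperimetric profile of $X_1$), and $\omega_1$ is monotone under inclusion, so $\omega_1(X_i,g)\geq\omega_1(X_1,g)$ for all $i$. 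Fix $i$ with $\Vol_n(\partial X_i)<\omega_1(X_1,g)$ and apply Theorem~\ref{a} to $B=X_i$; if alternative~(1) holds we are done. Alternative~(2) cannot hold: a strictly mean-convex foliation of $X_i$ shows that $X_i$ traps no minimal hypersurface, so the level set (mean curvature) flow of $\partial X_i$ clears $X_i$ while its $n$-volume stays $\leq\Vol_n(\partial X_i)$ by area monotonicity, and intersecting this flow with $X_i$ yields a sweepout of $X_i$ with all slices of $n$-volume $\leq\Vol_n(\partial X_i)<\omega_1(X_1,g)\leq\omega_1(X_i,g)$, contradicting the definition of $\omega_1(X_i,g)$. (Equivalently one can bypass Theorem~\ref{a} here and run the flow of $\partial X_i$ directly: it either clears $X_i$, impossible by this comparison, or subconverges as $t\to\infty$ to a nonempty embedded minimal hypersurface of finite $n$-volume.) I expect this last implication to be the main obstacle --- it is essentially the sweepout estimate of Chambers--Liokumovich --- the delicate points being to show the flow of $\partial X_i$ leaves no unswept pocket (which, if it did, would itself carry a finite volume minimal hypersurface and finish the proof anyway) and to control its flat-norm continuity so that it is a genuine sweepout. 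The remaining assertions in the statement are the special cases just treated, with $B$ a large $X_i$ in the finite volume case and $B=X$ for a mean-concave compact subset, the latter recovering Montezuma's theorem.
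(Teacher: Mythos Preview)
Your treatment of part~(2) is correct and coincides with the paper's: the strictly mean convex foliation $\{K_t\}$ given by Theorem~\ref{a}(2) cannot cross the strictly mean concave barrier $\partial X$, by the maximum/avoidance principle, contradicting $X\subset K_0$ and $X\cap K_1=\varnothing$.

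For part~(1) your overall strategy---compare $\Vol_n(\partial X_i)\to 0$ against a fixed positive lower bound coming from sweeping out a fixed compact piece of $M$---is exactly the paper's. But your execution has a genuine gap at the step you yourself flag. You write ``the level set (mean curvature) flow of $\partial X_i$ clears $X_i$ while its $n$-volume stays $\leq \Vol_n(\partial X_i)$ by area monotonicity.'' The hypersurface $\partial X_i$ carries no sign assumption on its mean curvature, so the level set flow starting from $\partial X_i$ need not be mean convex, area monotonicity is unavailable, the flow may fatten, and there is no reason it should sweep out $X_i$. Knowing (from the foliation of Theorem~\ref{a}(2)) that $X_i$ contains no closed minimal hypersurface does not repair this: that fact constrains possible long-time limits, not the short-time behaviour or area of the flow of an arbitrary initial hypersurface.

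The paper avoids this entirely by never flowing $\partial X_i$. It keeps the foliation $\{K_t\}$ of $B_0\supset X_i$ produced by Theorem~\ref{a}(2) and looks instead at the closed sets $K_t\cap X_i$. A first variation computation shows $t\mapsto \Vol_n\big(\partial(K_t\cap X_i)\big)$ is nonincreasing: on $\partial K_t\cap \interior(X_i)$ the boundary moves by mean curvature with strictly mean convex sign, contributing a nonpositive term, while on $\partial X_i\cap \interior(K_t)$ the normal speed is zero. Since $\partial(K_0\cap X_i)=\partial X_i$, one gets $\Vol_n\big(\partial(K_t\cap X_i)\big)\leq \Vol_n(\partial X_i)$ for all $t$. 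On the other hand $\{\partial(K_t\cap X_i)\}_t$ sweeps out a fixed small ball $b\subset X_1\subset X_i$, forcing $\sup_t \Vol_n\big(\partial(K_t\cap X_i)\big)$ to be bounded below by a positive constant independent of $i$ (your $\omega_1(X_1,g)>0$ is a perfectly good way to name this constant). Sending $i\to\infty$ gives the contradiction. If you replace your ``flow of $\partial X_i$'' paragraph by this intersection-and-first-variation argument, your proof goes through and matches the paper's.
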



We will say that a hypersurface $S$ embedded in $(N,g)$ is \textit{locally} (resp. \textit{globally}) \textit{area minimizing} if $S$ is a minimal hypersurface and if any hypersurface isotopic to $S$ in a neighborhood of $S$ in $N$ (resp. any hypersurface in the same $\mathbb{Z}_2$-homology class as $S\subset N$) has $n$-volume at least $\Vol_n(S)$. Before giving the proof of Theorem \ref{a} and Corollary \ref{explanation}, we need the following local min-max theorem for non-bumpy metrics. In our setting, the width $W$ of a compact manifold $(N,g)$ is defined in Appendix B, (\ref{wiidth}). Condition [M] and Type I, II, III stable minimal hypersurfaces are introduced in the discussion of Appendix A 

\begin{prop} \label{nonbumpy}
Let $(N^{n+1},g)$ be a compact manifold with minimal boundary, with $2\leq n\leq 6$, such that $\partial N$ is locally area minimizing inside $N$. Then there is a closed embedded minimal hypersurface $\Gamma$ inside the interior $\interior(N)$ whose index is at most one and whose $n$-volume is bounded by the width $W$ of $(N,g)$. 
\end{prop}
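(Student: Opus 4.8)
The plan is to obtain $\Gamma$ as a limit of one-parameter min-max minimal hypersurfaces for a sequence of nearby \emph{bumpy} metrics, for which the Almgren--Pitts--Marques--Neves machinery applies with index control, and then to pass to the limit via Sharp's compactness theorem, using the hypothesis on $\partial N$ to prevent degeneration onto the boundary. Concretely, I first choose metrics $g_i \to g$ in the $C^\infty$ topology with each $g_i$ bumpy and with $\partial N$ still minimal and locally area minimizing for $g_i$ (perturbing in the interior, away from $\partial N$; this is the version of White's bumpy metric theorem discussed in Appendix A, and Condition [M] is preserved). For each $i$, since the minimal hypersurface $\partial N$ is a one-sided barrier, one-parameter Almgren--Pitts min-max in $(N,g_i)$ produces a closed embedded minimal hypersurface $\Gamma_i \subset \interior(N)$ with $\Vol_{n,g_i}(\Gamma_i) = W_{g_i}$, and by the Marques--Neves index estimate for one-sweepouts (together with the multiplicity-one theorem in the bumpy case) the index of $\Gamma_i$ is at most one. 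By continuity of the width under $C^\infty$-perturbations of the metric, $W_{g_i} \to W$.

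Next I would pass to the limit. The hypersurfaces $\Gamma_i$ have $n$-volume bounded by $\sup_i W_{g_i} < \infty$ and index at most one, so since $2 \leq n \leq 6$, Sharp's compactness theorem applies: after passing to a subsequence, $\Gamma_i$ converges, smoothly with finite multiplicity away from at most finitely many points and as varifolds, to a closed embedded minimal hypersurface $\Gamma$ of $(N,g)$, with $\Vol_n(\Gamma) \leq W$ and, by lower semicontinuity of the Morse index under this type of convergence, with index at most one. That $\Gamma \neq \varnothing$ follows from $W > 0$, which is a Lusternik--Schnirelmann-type lower bound: any one-sweepout of $(N,g)$ has a slice whose $n$-volume is bounded below by a positive constant depending only on $(N,g)$.

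The remaining point — and the main obstacle — is to rule out that $\Gamma$ contains a component of $\partial N$, i.e.\ that the $\Gamma_i$ collapse onto the boundary; this is exactly where the assumption that $\partial N$ is locally area minimizing is used. I would organize the discussion according to how $\Gamma_i$ can accumulate near $\partial N$, along the Type I, II and III behaviour introduced in Appendix A, and treat each case. A locally area minimizing $\partial N$ is in particular (possibly degenerate) stable, so the catenoid estimate of Marques--Neves, in the form valid for degenerate stable minimal hypersurfaces, produces in a collar of $\partial N$ a sweepout all of whose slices have $n$-volume strictly less than $m\,\Vol_n(\partial N)$ for any prescribed multiplicity $m$; if the $\Gamma_i$ converged (as varifolds) to $m[\partial N]$, then $W = m\,\Vol_n(\partial N)$, and comparing with the definition of the width gives the strict inequality $W < m\,\Vol_n(\partial N) = W$, a contradiction, while the maximum principle for stationary varifolds (Solomon--White) rules out the configurations in which $\Gamma$ meets but does not coincide with $\partial N$. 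Hence $\Gamma \subset \interior(N)$ with the asserted index and volume bounds.

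I expect the third step to be the delicate one: keeping track of the multiplicity of convergence near $\partial N$, constructing the precise collar sweepouts, and verifying that the catenoid-modified sweepouts are admissible competitors for the width, is where the real work lies; the rest is assembling standard ingredients (bumpy approximation, the Almgren--Pitts--Marques--Neves one-parameter theory with index bounds, and Sharp compactness).
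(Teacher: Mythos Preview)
Your overall strategy --- approximate by bumpy metrics, run one-parameter min-max with index bounds, pass to the limit via Sharp's compactness, and rule out collapse onto $\partial N$ --- matches the paper's. The gap is in the first and last steps, and they are linked.

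You propose to perturb $g$ to bumpy metrics $g_i$ ``in the interior, away from $\partial N$''. But the bumpy local min-max theorem used here (Theorem~\ref{discreteminmax}) requires $\partial N$ to be \emph{strictly} stable, not merely locally area minimizing. If a component of $\partial N$ is degenerate stable (Type~II in the language of Appendix~A, which is compatible with being locally area minimizing), then perturbing only in the interior cannot make it strictly stable, while perturbing near $\partial N$ generically destroys its minimality. So as written you cannot invoke the bumpy min-max result. (Incidentally, Appendix~A does not contain a version of White's bumpy theorem; it treats the Type~I/II/III trichotomy for degenerate stable hypersurfaces.)

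The paper resolves this with an intermediate step you are missing: it first observes that since $\partial N$ is locally area minimizing and $W$ exceeds the volume of each component, one may assume each component satisfies Condition~[M] and is strictly stable or Type~II; it then uses Lemma~\ref{adeformation}(1) to produce metrics $h^{(q)} \to g$ for which $\partial N$ is \emph{strictly} stable and admits a collar $\mathcal{N}_q$, of thickness essentially independent of $q$, foliated by strictly mean convex leaves. Only after this is $h^{(q)}$ further perturbed to a bumpy $g_q$, chosen close enough to $h^{(q)}$ that $(\mathcal{N}_q,g_q)$ contains no closed minimal hypersurface except $\partial N$. This buys two things at once: Theorem~\ref{discreteminmax} now applies, and every $\Gamma_q$ must intersect the fixed region $N \setminus \mathcal{N}_q$. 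The limit then cannot lie in $\partial N$ by the monotonicity formula alone --- no catenoid estimate is needed.

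Your proposed endgame via a catenoid estimate for degenerate stable boundary components is not standard (the Ketover--Marques--Neves estimate is stated for strictly stable two-sided hypersurfaces in the interior), and even granting an analogue, you would still need to glue the collar sweepout into an admissible competitor for the width $W$ as defined in (\ref{wiidth}), and to exclude limits that are partly on $\partial N$ and partly in the interior. The paper's uniform mean-convex collar sidesteps all of this.
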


\begin{proof}
Since from \cite{MorganRos} the width $W$ of $(N,g)$ is larger than the $n$-volume of any connected component of $\partial N$, by Lemma \ref{degenerate} in Appendix A and the discussion following it, we can suppose that each component of $\partial N$ satisfies Condition [M] and is either strictly stable or degenerate stable of Type II. Then by Lemma \ref{adeformation} (1) in Appendix A, there is a sequence of metrics $\{h^{(q)}\}$ converging to $g$ such that for each $h^{(q)}$, $\partial N$ is strictly stable and has a neighborhood $\mathcal{N}_q$ foliated by hypersurfaces which are strictly mean convex (except $\partial N$ of course). The thickness of this neighborhood $\mathcal{N}_q$ essentially does not depend on $q$. For the next argument, it can be helpful to think of the compact $(n+1)$-manifold $(N,h^{(q)})$ as a domain isometrically embedded into a closed Riemannian $(n+1)$-manifold $\hat{N}$ endowed with a metric still denoted by $h^{(q)}$.  Consider a sequence of bumpy metrics $g_q$ converging to $g$, so that $g_q$ is very close to $h^{(q)}$. Since $\partial N$ is strictly stable with respect to $h^{(q)}$, the implicit function theorem \cite{Whitebumpy} implies that there is a unique minimal hypersurface for $g_q$ smoothly close to $\partial N$ and by pulling back the metric with a diffeomorphism close to the identity we can make sure that this minimal hypersurface for $g_q$ is in fact $\partial N$. Similarly, by a deformation argument involving the Jacobi operator, $\mathcal{N}_q$ is still foliated by hypersurfaces which are strictly mean convex except for $\partial N$. By the maximum principle, $(\mathcal{N}_q,g_q)$ contains no minimal hypersurface except $\partial N$.
Now we apply the local min-max theorem for bumpy metrics to $(N,g_q)$, see Theorem \ref{discreteminmax} in Appendix B. For each $q$ we have a closed embedded minimal hypersurface $\Gamma_q$ inside the interior $\interior(N)$ with index at most one and $n$-volume bounded by the width of $(N,g_q)$, intersecting $N\backslash \mathcal{N}_q$. Sending $q$ to infinity, $\Gamma_q$ converges (in the varifold sense) subsequently by \cite{Sharp} to a minimal hypersurface embedded in $N$, with index at most one and $n$-volume at most $W$. It cannot be contained in the boundary $\partial N$ by the monotonicity formula and the fact that $\Gamma_q\cap N\backslash \mathcal{N}_q\neq \varnothing$ for all $q$.

\end{proof}

\begin{proof} [Proof of Theorem \ref{a}]

Fix $p\in M$ and a large radius $r>0$. Let $B\subset M$ be a compact domain which we can assume to be connected, and let $D$ be a compact domain strictly containing a geodesic open ball $B_{r}(p)$ which itself contains $B$. We modify the metric in an arbitrarily thin neighborhood of $\partial D$ to make it mean convex (the mean curvature vector points inwards). The new metric is called $g_D$ and coincide with $g$ on $B_{r}(p)$. 

It is enough to show the following claim:
\vspace{1em}

\begin{claim}
 Suppose that $(B,g)$ admits no singular strictly mean-convex foliations. Then there is a closed embedded minimal hypersurface $S$ inside $(D,g_D)$ of Morse index at most one, with $n$-volume bounded by a constant depending only on $(B,g)$, which intersects $B$.
 \end{claim}

\vspace{1em}
Indeed, once this claim is proved, then by sending $r$ to infinity, we get a sequence of minimal hypersurfaces (with respect to a sequence of new metrics) intersecting $B$ with bounded index and area so by \cite{Sharp} they subsequentially converge to a complete embedded finite volume minimal hypersurface (with respect to $g$) which intersects $B$ and the theorem is proved.

\vspace{1em}

 Let us show the claim. We will need to set up a constrained minimization problem depending on some data. Before defining it we introduce some notations.
Let $D'\subset D$ be any domain containing $B$ with $C^{1,1}$ weakly mean convex boundary with respect to the new metric $g_D$ (hence $D$ itself would be an example of such a $D'$), let $\Sigma$ be any closed minimal hypersurface (possibly empty, $1$-sided or $2$-sided) embedded inside $\interior(D')\backslash B$ and let $\mathbf{D}$ be the metric completion of $D'\backslash \Sigma$. The domain $B$ is isometrically embedded inside $\mathbf{D}$.

Next, we consider the following constrained minimization problem which depends on the data $\mathbf{D}$ defined in the previous paragraph: minimize the $n$-volume of $\partial B'$ over open sets $B'$ with integer rectifiable boundary, containing $B$, such that there is a family of integral currents $\{b_t\}_{t\in[0,1]} \subset \mathbf{I}_{n+1}(\mathbf{D};\mathbb{Z}_2)$ verifying:
 \begin{itemize}
  \item $\{\partial b_t\}_{t\in[0,1]} \subset\mathcal{Z}_{n}(\mathbf{D};\mathbb{Z}_2)$ is continuous in the $\mathbf{F}$-topology,
 \item $\spt(\partial b_0) = \partial B$, $\spt(\partial b_1) = \partial B'$,
 \item for all $t\in[0,1]$, $B \subset \spt(b_t) \subset \mathbf{D}$
 \item for all $t\in[0,1]$, $\mathbf{M}(\partial b_t) \leq \Vol_n(\partial B)+1$
 \end{itemize}
 (see Appendix B for notations).
This is a constrained Plateau problem since there is an $n$-volume constraint (which does not affect regularity) and a geometric constraint given by $B$, $\mathbf{D}$ (which does). A solution of this minimization problem exists by weak mean convexity of $\partial \mathbf{D}$, by compactness of cycles for the flat topology and by interpolation results \cite[Proposition A.2]{MaNeindexbound}: it gives a $C^{1,1}$-hypersurface $\Gamma$ smooth except maybe at points touching $\partial  B$ and with $n$-volume at most $\Vol_n(\partial B)$. Let $B''$ be the closure of the component of $\mathbf{D}\backslash \Gamma$ containing the interior of $B$. 

Given $B\subset B''$ as above, 
by smoothing out the constrained Plateau problem using $\mu$-bubbles as in \cite[Subsection 1.4]{GromovPlateauStein}, $B''$ can be approximated from outside by domains of $\mathbf{D}$ with smooth boundary having nonnegative mean curvature. By the avoidance principle for the level set flow, $B''$ itself is thus a weakly mean convex set and so we get  the following dichotomy (see Appendix C):
\begin{enumerate}[label=(\roman*)] 
\item either one component (called $A$) of $\partial B''$ is strictly mean convex,
\item or the boundary $\partial B''$ is a smooth minimal hypersurface.
\end{enumerate}

For a choice of data $\mathbf{D}$ and completion $B''$ as above, in case (i), the strictly mean convex component $A$ has to touch $\partial B$. We can run the mean curvature flow starting from $A$ (see Appendix C) and get $\{A_t\}_{t\geq0}$. The level sets $A_t$ cannot sweep out the whole domain $B$ by assumption in the statement of the claim.
So $A_t$ converges as $t\to\infty$ to a non empty stable closed embedded minimal hypersurface $S$ intersecting $B$ and having $n$-volume less than $\Vol_n(\partial B)$. 
This proves the claim in case (i).

In case (ii) we can assume that $\partial B''$ does not touch $B$ (otherwise the claim is true). It means we can suppose that $\partial B''$ is a smooth minimal hypersurface locally area minimizing inside $B''$. Recall that the radius $r$ and $D$ are fixed. If for any possible choice of $\mathbf{D}$, $B''$, situation (ii) occurs then we can consider such a $B''$ of minimal volume, where the minimum is taken over all data $\mathbf{D}$ and  $B''$ as in the constrained minimization problem defined previously. Such a minimizer exists by compactness because of the $n$-volume bound on $\Gamma$ and the stability of $\partial B''$ (see \cite{Sharp}). This manifold $B''$ is compact.
Given such a minimizer $B''$, we remove a maximal number of disjoint 1-sided minimal hypersurfaces $S_1,...,S_q$ and $2$-sided non-separating minimal hypersurfaces $T_1,...,T_r$ contained in the interior of $B''\backslash B$ so as to have 
$$\Vol_n(\partial B'') + \sum_{i=1}^q 2\Vol_n(S_i)+ \sum_{i=1}^r \Vol_n(T_i) \leq \Vol_n(\partial B) +1$$
and consider the metric completion $\mathbf{B}$ of $B''\backslash \big(\bigcup_{i=1}^q S_i\cup \bigcup_{i=1}^r T_i\big)$. This manifold $\mathbf{B}$ is compact, 
and by the previous inequality we have
$$\Vol_n(\partial \mathbf{B}) \leq \Vol_n(\partial B) +1.$$
Besides, the original domain $B$ is isometrically embedded in $\mathbf{B}$. We used similar ideas of considering a ``core'' in \cite{AntoineYau}.

We can apply a local min-max argument to $\mathbf{B}$. Let $W(\mathbf{B})$ be the width of $\mathbf{B}$ as defined in Appendix B, let $\Phi:[0,1] \to \mathcal{Z}_{n}(B;\mathbb{Z}_2)$ be any fixed sweepout of $B$ so that $\Phi(0)=0$, $\Phi(1)$ is $\partial B$ with multiplicity one ($\Phi$ can be for instance constructed with the level sets of a well chosen Morse function). By Proposition \ref{nonbumpy}, we get a closed connected embedded minimal hypersurface $S$ inside the interior $\interior(\mathbf{B})$, which has Morse index at most one. Moreover the width of $\mathbf{B}$ and thus the $n$-volume of $S$ are bounded in terms of $(B,g)$ only. To see this, it suffices 
to understand that one can deform $\partial B$ inside $\mathbf{B}$ continuously in the $\mathbf{F}$-topology to $\partial \mathbf{B}$, such that along the deformation the $n$-volume of the hypersurfaces is say less than $\Vol_n(\partial B) +1$. Gluing this deformation to $\Phi$ would then yield a sweepout of $\mathbf{B}$ and show
$$W(\mathbf{B})\leq \max_{t\in [0,1]} \mathbf{M}(\Phi(t)) +1.$$ 
The existence of such a deformation from $\partial B$ to $\partial\mathbf{B}$ inside $\mathbf{B}$ follows from the volume minimality property of $\mathbf{B}$ and \cite[Proposition A.2]{MaNeindexbound}. 

To finish the proof of the claim, we now wish to show that this min-max hypersurface $S$ intersects $B$. To argue towards a contradiction, assume that $S\subset \interior(\mathbf{B})$ does not intersect $B$. Suppose first that $S$ is 2-sided: then consider the metric completion $\mathbf{B}'$ of $\mathbf{B}\backslash S$. If $S$ is 1-sided or $2$-sided non-separating, then we could run again the constrained minimization procedure described earlier to $\partial B$ inside $\mathbf{B}'$. Either we find a minimizer not entirely contained in the boundary $\partial \mathbf{B}'$ or a minimizer is exactly $\partial \mathbf{B}'$ (then we have $\Vol_n(\partial \mathbf{B}') \leq \Vol_n(\partial B) +1$). In any case, this contradicts either the minimality of the volume of $\mathbf{B}$ or the maximality of the number of boundary components of $\partial \mathbf{B}$, since $\partial \mathbf{B}'$ has more boundary components than $\partial \mathbf{B}$. Similarly, if $S$ is $2$-sided and separates $\mathbf{B}$ then $\mathbf{B}'$ has two components and by another constrained minimizing procedure applied to $\partial B$ in the component of $\mathbf{B}'$ containing it, it is clear that one can find a competitor $\hat{\mathbf{B}}$ to $\mathbf{B}$ contradicting the minimality of its volume.

\end{proof}

\begin{proof} [Proof of Corollary \ref{explanation}]
The first item $(1)$ follows from the following argument. Note that the $\bigcap_i X_i$ contains a small fixed ball $b$. By Theorem \ref{a},  if the conclusion we want is not true then for each $i$ there is a family of closed sets $\{K_t\}_{0\leq t\leq 1}$ such that $X_i \subset K_0$, $X_i \cap K_1 =\varnothing$ and $\{\partial K_t\}$ is a mean convex foliation of $K_0$ for a metric that coincides with $g$ near $X_i$. Moreover this foliation is locally either smooth, or given by a level set flow. Let us check that the $n$-volume of $\partial (K_t\cap X_i)$ decreases in $t$, for all $i$. Fix $i$, let $t'<t\in(0,1)$, first have $K_t\subset K_{t'}$. Consider 
$$S:=\partial (K_{t'}\cap X_i )\backslash \interior(K_t),$$ which is a hypersurface (rigourously speaking a cycle) contained in $K_{t'}\backslash \interior(K_t)$ and with boundary in $\partial K_{t}$. Minimize the $n$-volume of $S$ under the constraints that 
\begin{itemize}
\item the hypersurface remains in the $\mathbb{Z}_2$-homology class of $S$ inside $K_{t'}\backslash \interior(K_t)$,
\item the boundary of the hypersurface is $\partial S$,
 \item the hypersurface is contained in $K_{t'}\backslash \interior(K_t)$.
 \end{itemize}
 By mean convexity of $K_{t'}$ (see \cite{WhiteregMCF}), we get a minimizer $S' \subset K_{t'}\backslash \interior(K_t)$ which is locally area minimizing.  By the maximum principle \cite[Theorem 3.5]{WhiteregMCF} (see also \cite[Theorem 7.1]{Whitetopology}), $S'$ is actually contained in $\partial K_t$. By the constancy theorem, $S'=\partial (K_{t}\cap X_i )\backslash \interior(K_t)$. So we just showed that
 \begin{align*}
 \Vol_n(\partial (K_{t'}\cap X_i )) & = \Vol_n(\partial (K_{t'}\cap X_i )\backslash \interior(K_t)) +\Vol_n(\partial X_i \cap \interior(K_t))\\
 & > \Vol_n(\partial (K_{t}\cap X_i )\backslash \interior(K_t))+\Vol_n(\partial  X_i \cap \interior(K_t)) \\
 & = \Vol_n(\partial (K_{t}\cap X_i )).
 \end{align*}

On the other hand, since for each $i$, $\partial (K_t\cap X_i)$ gives a family of cycles in $\mathcal{Z}_n(M;\mathbb{Z}_2)$ that sweeps out the ball $b$, the relative isoperimetric inequality for cycles implies
$$\max_{t\in[0,1]} \Vol_n(\partial (K_t\cap X_i))>c>0$$  
for a positive constant $c$ independent of $i$. This contradicts the facts that $\lim_{i\to \infty} \Vol_n(\partial X_i) = 0$ and that $\Vol_n(\partial (K_t\cap X_i))$ decreases in $t$.

Similarly, for the second item, if the desired conclusion does not hold then by Theorem \ref{a} (2), there is a family of closed sets  $\{K_t\}_{0\leq t\leq 1}$ such that $X \subset K_0$, $X \cap K_1 =\varnothing$ and $\{\partial K_t\}$ is a mean convex foliation of $K_0$ for a metric that coincides with $g$ near $X$. But this cannot happen if $X$ is strictly mean concave by the maximum principle.

\end{proof}

\section{Local min-max and saddle point minimal hypersurfaces} \label{saddl}

Consider a complete manifold $(M^{n+1},g)$. We define \textit{saddle points of the $n$-volume functional} (or simply \textit{saddle point minimal hypersurfaces}) as follows. Let $\Gamma$ be a connected closed embedded minimal hypersurface. If it is $2$-sided then we call it a saddle point if there is a smooth family of hypersurfaces $\{\Gamma_t\}_{t\in(-\varepsilon,\varepsilon)}$ ($\varepsilon>0$) which are small graphical perturbations of $\Gamma=\Gamma_0$ so that $\{\Gamma_t\}_{t\in(-\varepsilon,0)}$ and $\{\Gamma_t\}_{t\in(0,\varepsilon)}$ are on different sides of $\Gamma$ and distinct from $\Gamma$, and 
$$\Vol_n(\Gamma) = \max_{t\in(-\varepsilon,\varepsilon)} \Vol_n(\Gamma_t).$$
If $\Gamma$ is $1$-sided, we call it a saddle point if its connected double cover is a saddle point in a double cover of the ambient manifold.
Note that if the metric is bumpy (i.e. no closed minimal hypersurface has a non-trivial Jacobi field), then saddle point minimal hypersurfaces are exactly $2$-sided unstable closed embedded minimal hypersurfaces and $1$-sided closed embedded minimal hypersurfaces with unstable double cover.

The goal of this section, which constitutes one of the technical cores of this paper, is to construct saddle points for general metrics from a localized $1$-parameter min-max procedure. When the metric is bumpy, this was achieved by the index bounds of Marques-Neves in \cite{MaNeindexbound}. Here we need to prove similar results for possibly non-generic metrics, but we cannot just rely on an approximation argument by generic metrics. Most notations are defined in Appendix B.

\subsection{Deformation theorems}

In this subsection, we consider a compact manifold $(N,g)$. To simplify the presentation, let us assume that: 

\vspace{1em}
\textit{each component of $\partial N$ is minimal and either strictly stable or degenerate stable of Type II} 

\vspace{1em}
(see Appendix A for the definition of Type I, II, III degenerate stable minimal hypersurfaces).

Let $\{\Phi_i\}_{i\in \mathbb{N}}$ be a sequence of continuous maps from $[0,1]$ into $\mathcal{Z}_n(N;\mathbf{F};\mathbb{Z}_2)$. Set
\begin{equation} \label{wwwidth}
L=\mathbf{L}(\{\Phi_i\}_{i\in \mathbb{N}}) := \limsup_{i\to \infty} \sup_{x\in [0,1]} \mathbf{M}(\Phi_i(x))
\end{equation}
and suppose that $\liminf_{i\to \infty} (L-\max_{j=0,1} \mathbf{M}(\Phi_i(j)))>0$.

We first prove a $1$-parameter version of Deformation Theorem A of \cite{MaNeindexbound} for non-bumpy metrics. Let $\mathcal{V}_n(N)$ be the closure, in the weak topology, of the set of $n$-dimensional rectifiable varifolds in $N$. Let $\mathcal{S}(L)$ be the family of stationary integral varifolds in $\mathcal{V}_n(N)$ of total mass $L$ with support a smooth closed embedded minimal hypersurface in $N$, which are $2$-unstable (see \cite[Definitions 4.1, 4.2]{MaNeindexbound}). The unstable components of $\spt(V)$, where $V\in \mathcal{S}(L)$, are inside the interior of $N$ by assumption on the boundary $\partial N$.

\begin{theo} [Deformation Theorem A, \cite{MaNeindexbound}]
Given $\{\Phi_i\}_{i\in \mathbb{N}}$ and a compact set $K\subset \mathcal{V}_n(N)$ which is at positive $\mathbf{F}$-distance of $\mathcal{S}(L)\cup |\Phi_i|([0,1])$ for all $i$ large, there exists another sequence $\{\Psi_i\}_{i\in \mathbb{N}}$ such that 
\begin{enumerate}[label=(\roman*)]
\item $\Psi_i $ is homotopic to $\Phi_i$ with fixed endpoints in the $\mathbf{F}$-topology for all $i\in \mathbb{N}$,
\item $\mathbf{L}(\{\Psi_i\}_{i\in \mathbb{N}}) \leq L$,
\item for any $\Sigma\in \mathcal{S}(L)$, there exists $\bar{\varepsilon}>0$, $j_0\in \mathbb{N}$, so that for all $i\geq j_0$, $|\Psi_i|([0,1]) \cap (\bar{\mathbf{B}}^{\mathbf{F}}_{\bar{\varepsilon}}(\Sigma) \cup K)= \varnothing$.
\end{enumerate}

\end{theo}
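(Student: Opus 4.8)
The plan is to follow the strategy of Marques–Neves' Deformation Theorem A in \cite{MaNeindexbound}, but adapting it to the setting where the ambient metric is only assumed to be non-bumpy (so that the critical varifolds of mass $L$ need not be isolated or nondegenerate) and where one deforms 1-parameter sweepouts rather than general $k$-parameter families. First I would recall the structure of the set $\mathcal{S}(L)$: each $V\in\mathcal{S}(L)$ is a stationary integral varifold with smooth embedded minimal support that is $2$-unstable, meaning there is a $2$-dimensional family of ambient deformations decreasing the area to second order. By assumption on $\partial N$ (each boundary component being strictly stable or degenerate stable of Type II), the unstable sheets of $\spt(V)$ lie in $\interior(N)$, so all the deformations can be taken supported away from $\partial N$; this is what keeps the boundary inert under the whole construction.

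The key mechanism is the ``pull-tight plus local deformation'' argument. First I would run a pull-tight procedure on $\{\Phi_i\}$ to arrange that any varifold arising as a limit of slices $\Phi_i(x_j)$ with mass converging to $L$ is stationary; this changes $\Phi_i$ by a homotopy with fixed endpoints and does not increase $\mathbf{L}$. Next, for each $\Sigma\in\mathcal{S}(L)$, I would invoke the $2$-unstable condition to build, on a small $\mathbf{F}$-ball $\bar{\mathbf{B}}^{\mathbf{F}}_{2\bar\epsilon}(\Sigma)$, an ambient isotopy (or one-parameter semigroup of ambient diffeomorphisms) that strictly decreases mass on that ball while fixing things outside a slightly larger ball — this is exactly Lemma-style content from \cite[Section 4]{MaNeindexbound}, and the point is that $2$-unstability gives enough ``room'' in a $1$-parameter family: a path can be pushed off a codimension-$\geq 2$ obstacle. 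One then composes these local deformations along the (compact, by the Weyl-law-free compactness in \cite{Sharp}) collection of $\Sigma$'s meeting the relevant mass level, using a partition-of-unity/finite-cover argument to globalize while controlling the total mass increase outside the balls to be negligible. Simultaneously one keeps the family away from the fixed compact set $K$, which is possible since $K$ is at positive $\mathbf{F}$-distance from $\mathcal{S}(L)$ and from $|\Phi_i|([0,1])$ for large $i$: the deformation only moves mass that is near $\mathcal{S}(L)$, hence stays away from $K$.

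The hard part will be two intertwined issues. First, the lack of bumpiness: the critical varifolds in $\mathcal{S}(L)$ may form a continuum and may be degenerate, so one cannot treat them one at a time as isolated points — instead one must cover a neighborhood of the mass-$L$ critical set by finitely many $\mathbf{F}$-balls on each of which a uniform local deformation with a uniform mass decrement exists, and verify that these patch together. Second, and more delicate, is the interpolation/continuity bookkeeping in the $\mathbf{F}$-topology: after applying the discrete ambient deformations one must reconnect the slices into a continuous (in $\mathbf{F}$) path with endpoints fixed, and show $\mathbf{L}(\{\Psi_i\})\leq L$, which relies on the Almgren–Pitts interpolation machinery as packaged in \cite[Proposition A.2]{MaNeindexbound} together with careful estimates that the mass overshoot introduced by interpolation vanishes as the discretization is refined. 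I expect essentially no new geometric input beyond \cite{MaNeindexbound} and \cite{Sharp} is needed; the work is in checking that the one-parameter, non-bumpy version of each lemma goes through, with the boundary behavior handled once and for all by the Type II degenerate-stable hypothesis.
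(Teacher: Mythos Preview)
Your overall strategy (use $2$-unstability to locally push the $1$-parameter family off $\mathcal{S}(L)$, then patch) is the right one, but there is a genuine gap in how you propose to globalize.

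You assert that the collection of relevant $\Sigma$'s is \emph{compact} by Sharp's compactness, and then propose a \emph{finite} cover with uniform mass decrement. This is the step that fails. The $2$-unstable condition defining $\mathcal{S}(L)$ is an open condition, not a closed one: a sequence $\Sigma_j\in\mathcal{S}(L)$ can converge (in the varifold sense, with mass exactly $L$) to a stationary integral varifold whose support is still smooth but which is only $1$-unstable, or even stable---for instance when the unstable directions ``collapse'' onto a degenerate Jacobi field in the non-bumpy setting, or when multiplicities rearrange in the limit. So $\mathcal{S}(L)$ is only $\sigma$-compact, and there is no finite $\mathbf{F}$-cover of $\mathcal{S}(L)$ on which the $2$-unstable deformations $\{F^k_v\}$ have a uniform lower bound on the mass drop and a uniform radius of effectiveness. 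A finite-cover/partition-of-unity argument therefore cannot close.

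The paper's proof confronts exactly this: it writes $\mathcal{S}(L)$ as a countable union of compact sets and chooses a \emph{countable} cover $\{\mathbf{B}^{\mathbf{F}}_{\epsilon_k}(\Sigma_k)\}$ with $\epsilon_k\to 0$, arranged so that each ball meets only finitely many others. The substantial work is then the construction of a function $\eta:\mathcal{S}(L)\to(0,\infty)$ defining a neighborhood $\mathcal{N}_\eta$ with the property that from any $V\in\mathcal{N}_\eta\cap\mathbf{B}^{\mathbf{F}}_{\epsilon_k}(\Sigma_k)$ one can flow by some $F^k_v$ to exit $\mathcal{N}_\eta$ while staying inside $\mathbf{B}^{\mathbf{F}}_{2\epsilon_{k'}}(\Sigma_{k'})$ for every $k'$ with $V\in\mathbf{B}^{\mathbf{F}}_{\epsilon_{k'}}(\Sigma_{k'})$. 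This compatibility across overlapping balls (with non-uniform radii) is what replaces your uniform decrement, and it requires the layered definition of $\eta$ via the auxiliary constants $c'(\bar k)$ indexed by the \emph{first} ball containing a given $\Sigma$. Once $\eta$ is built, the path $\Phi_i$ is chopped at the entry/exit times of $\mathcal{N}_\eta$, intermediate nodes $b_l$ are pushed out by explicit concatenations $A_l$, and then each resulting sub-path (now living in a single $\mathbf{B}^{\mathbf{F}}_{2\epsilon_k}(\Sigma_k)$ with endpoints outside $\mathcal{N}_{\eta/2}$) is deformed exactly as in \cite{MaNeindexbound}, using the maps $\hat{H}_i$ and the gradient-like flow $\phi$ from their Lemma~4.5. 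Two side remarks: the pull-tight step you propose is not used here (it is reserved for Deformation Theorem C), and the reconnection is by explicit concatenation of $\mathbf{F}$-continuous paths rather than by the discretization/interpolation machinery of \cite[Proposition~A.2]{MaNeindexbound}.
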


\begin{proof} Let $K_1:= K\cup\{V ;\|V\|(N)\leq \frac{1}{2}\big(L+\liminf_{i\to \infty} \max_{j=0,1} \mathbf{M}(\Phi_i(j))\big)\}$ and 
let $d:=\min\{\mathbf{F}(\mathcal{S}(L),Z ) ; Z\in K_1\}>0$. Note that $\mathcal{S}(L)$ can be written as a countable union of compact subsets $\mathcal{S}_k$ ($k\geq 0$) of $\mathcal{V}_n(N)$:
$$\mathcal{S}(L) = \bigcup_{k\geq 0} \mathcal{S}_k.$$
For each $k\geq 0$, we can find positive numbers $\varepsilon_k$, $c_{0,k}$ and a finite number of varifolds $\Sigma_{k,1},...,\Sigma_{k,q_k} \in \mathcal{S}(L)$ such that
\begin{itemize}
\item for any $k\geq 0$,  $\mathcal{S}_k \subset \bigcup_{r=1}^{q_k} \mathbf{B}^{\mathbf{F}}_{\varepsilon_{k}} (\Sigma_{k,r})$,
\item each $\Sigma_{k,r}$ is $2$-unstable in an $\varepsilon_k$-neighborhood for some family $\{F^{k,r}_v\}_{v\in \bar{B}^2} \subset \Diff(N)$ and $c_{0,k}>0$ (see \cite[Definitions 4.1]{MaNeindexbound}).  
\end{itemize}

Without loss of generality (by changing $\varepsilon_k$, $\{F^{k,r}_v\}$, $c_{0,k}$) we can also assume that 
\begin{equation}\label{oloi}
\min\{\mathbf{F}((F^{k,r}_v)_\sharp V,Z ) ; Z\in K_1, v\in \bar{B}^2\}>d/2 \text{ for all } \Sigma_{k,r} \text{ and } V\in \bar{\mathbf{B}}^{\mathbf{F}}_{2\varepsilon_k} (\Sigma_{k,r})
\end{equation}
and if the sequence $\{\varepsilon_k\}$ is infinite,
\begin{equation}\label{alai}
 \sup_{v\in \bar{B}^2}\|F^{k,r}_v - Id\|_{C^1(N)} \leq  \theta_k  \\
\end{equation}
where the $\theta_k$ satisfy $\lim_{k\to \infty} \theta_k = 0$ and will be chosen later.

Given a positive number $u>0$ and some $k\geq 0$, let $\mathcal{N}^{(k)}_u$ be the $u$-neighborhood of $\mathcal{S}_k $:
$$\mathcal{N}^{(k)}_u:= \bigcup_{\Sigma \in \mathcal{S}_k}{\mathbf{B}}^{\mathbf{F}}_{u} (\Sigma).$$ 
Consider for each $k\geq 0$ a positive number $\eta_k$ so that the following holds: 
$$\mathcal{N}^{(k)}_{3\eta_k}\subset \bigcup_{r=1}^{q_k}\mathbf{B}^{\mathbf{F}}_{\varepsilon_k} (\Sigma_{k,r}).$$
This $\eta_k$ will be fixed later in the proof and will depend on $\varepsilon_k$.


\vspace{1em} 
We are given $\{\Phi_i\}$. In what follows, for each $k\geq 0$ we define a procedure which ``moves the sweepout away from $\mathcal{S}_k$'' and call this procedure \emph{$k^{th}$ move}. 
 
To define the $k^{th}$ move, fix $k$ and a sweepout $\Phi : [0,1] \to \mathcal{Z}_n(N;\mathbf{F},\mathbb{Z}_2)$. The definition of the $k^{th}$ move only depends on $\Phi$, and the choice of $\varepsilon_k, \{F^{k,r}_v\}_{r=1}^{q_k}, c_{0,k}$. 
Let $U_{\eta_k}= [a_1,a_2]\cup...\cup[a_{2p-1},a_{2p}]$ be a union of closed disjoint intervals in $[0,1]$ (the choice is not unique) so that 
$$\text{ for all } x\in U_{\eta_k}=\bigcup_{l=1}^{p}{[a_{2l-1},a_{2l}]}, \quad |\Phi(x)| \in \mathcal{N}^{(k)}_{2\eta_k}$$ 
$$\text{ and for all } x\in [0,1]\backslash \bigcup_{l=1}^{p}{(a_{2l-1},a_{2l})}, \quad |\Phi(x)| \in \mathcal{V}_n(N) \backslash \mathcal{N}^{(k)}_{\eta_k}.$$
This union of intervals $U_{\eta_k}$ exists by $\mathbf{F}$-continuity of $\Phi$.

We want to modify $\Phi$ on each of the intervals $[a_{2l-1},a_{2l}]$ into a map $\Psi$ ($\Phi$ is left unmodified outside of $U_{\eta_k}$). Let us  describe the changes on $[a_1,a_2]$, the modifications on the other intervals being similar. First we can decompose this interval into 
$$[a_1,a_2]=[b_1,b_2]\cup[b_2,b_3]\cup...[b_{m-1},b_m] \quad (\text{where } a_1=b_1, a_2=b_m)$$
so that for every $x=1,...,m-1$, there exists $r_x$
such that 
$$|\Phi|([b_x,b_{x+1}])\subset \mathbf{B}^{\mathbf{F}}_{\epsilon_k}(\Sigma_{k,{r_x}}).$$
 By construction $|\Phi|(a_1)$ and $|\Phi|(a_2)$ are not in $\mathcal{N}^{(k)}_{\eta_k}$. Next we move each $|\Phi|(b_l)$ also outside of $\mathcal{N}^{(k)}_{\eta_k}$, but keeping it inside $\mathbf{B}^{\mathbf{F}}_{\epsilon_{k}}(\Sigma_{k,r_x})$ as follows. 
For each $x=2,...,m-1$ and $r_x$ as above, one can find a vector $v_x\in \bar{B}^2$ so that the mass $||(F^{k,r_x}_{sv_x})_\sharp |\Phi|(b_x)||(N)$ is decreasing in $s$, and for all $s\in [0,1]$ 
$$(F^{k,r_x}_{sv_x})_\sharp |\Phi|(b_x) \in \mathbf{B}^{\mathbf{F}}_{\epsilon_{k}}(\Sigma_{k,r_x}) \cap \mathbf{B}^{\mathbf{F}}_{\epsilon_{k}}(\Sigma_{k,r_{x-1}}),$$
$$(F_{v_x})_\sharp |\Phi|(b_x) \notin \mathcal{N}^{(k)}_{\eta_k}.$$
For the above to be true, it may be necessary to reduce $\eta_k$.

For $x=2,...,m-1$, let $A_x : [0,1] \to \mathcal{Z}_n(N;\mathbf{F};\mathbb{Z}_2)$ be the path 
$\forall s\in [0,1], A_x(s):= (F_{sv_x})_\sharp \Phi(b_x)$
and let $A_x^-$ be the same path but with reverse parametrization. Up to reparametrization, $\Phi\big|_{[a_1,a_2]}$ is clearly homotopic in the $\mathbf{F}$-topology to the following concatenation (where $+$ stands for concatenation):

\begin{equation} \label{concat}
(\Phi\big|_{[b_1,b_2]} + A_2 )+ (A_2^- +\Phi\big|_{[b_2,b_3]} +A_3 )+( A_3^-+...) +... +(A_{m-1}^-+\Phi\big|_{[b_{m-1},b_m]}).
\end{equation}
Each subsum in parentheses is a path $P:[0,1] \to \mathcal{Z}_n(N;\mathbf{F},\mathbb{Z}_2)$ so that the image (in the space of varifolds) of $|P|$ is included in a ball $\mathbf{B}^{\mathbf{F}}_{\epsilon_{k}}(\Sigma_{k,r})$, and whose endpoints satisfy $|P(0)|, |P(1)| \notin \mathcal{N}^{(k)}_{\eta_k}$.

Let us explain how to deform any such path $P$ into $Q$, fixing the endpoints, so that the varifold image $|Q|([0,1])$ stays uniformly far from $\mathcal{S}_k$. 
A similar more general deformation is the object of the proof of \cite[Deformation Theorem A]{MaNeindexbound}, we will use the same notations and explain the few modifications needed here. Given $\Sigma_{k,r}$ as in the previous paragraph, recall the definition of the map 
$$V\mapsto m(V)$$ defined on $\bar{\mathbf{B}}^{\mathbf{F}}_{\varepsilon_{k}}(\Sigma_{k,r})$, given in \cite[Definition 4.1]{MaNeindexbound}:  by choice of $\varepsilon_{k}$, $\Sigma_{k,r}$ is $2$-unstable in an $\varepsilon_{k}$-neighborhood for some family $\{F^{k,r}_v\}_{v\in \bar{B}^2}\subset \Diff(N)$ and $c_{0,k}>0$, and for any $V\in\bar{\mathbf{B}}^{\mathbf{F}}_{\varepsilon_{k}}(\Sigma_{k})$ the smooth function 
$$v\mapsto \|(F^{k,r}_v)_\sharp V\|(N)$$
has a unique maximum at $m(V)\in B^2_{c_{0,k}/\sqrt{10}}(0).$
By abuse of notations, let $m:[0,1]\to \bar{B}^2$ be defined by $m(s):= m(|P|(s))$. Following the arguments of \cite[Deformation Theorem A]{MaNeindexbound}, there is a continuous homotopy 
$$\hat{H}: [0,1] \times [0,1]\to \bar{B}^2_{1/2}(0) \text{ so that } \hat{H}(s,0)=0 \quad \forall s\in[0,1],$$
$$ \inf_{s\in[0,1]} |m(s) - \hat{H}(s,1)| \geq \mu>0.$$
$$ \text{ and for $j=0,1$ } \quad \|(F^{k,r}_{\hat{H}(j,t)})_\sharp |P|(j)\|(N) \text{ decreases in $t\in[0,1]$}.$$
Let $T=T(\mu, \Sigma_{k,r},\varepsilon_{k},\{F^{k,r}_v\}, c_{0,k}) >0$ be given by \cite[Lemma 4.5]{MaNeindexbound}. The new path $Q$ is then obtained as the (reparametrized) concatenation of the following paths (see \cite{MaNeindexbound} for notations):
\begin{itemize}
\item $Q_1(t) := (F^{k,r}_{\hat{H}(0,t)})_\sharp P(0)$ for $t\in[0,1]$,
\item $Q_2(t) :=  (F^{k,r}_{\phi^{P(0)}(\hat{H}(0,1),tT_i)})_\sharp P(0)$ for $t\in[0,1]$,
\item $Q_3(t) :=  (F^{k,r}_{\phi^{P(t)}(\hat{H}(t,1),T_i)})_\sharp P(t)$ for $t\in[0,1]$,
\item $Q_4(t) :=  (F^{k,r}_{\phi^{P(1)}(\hat{H}(1,1),(1-t)T_i)})_\sharp P(1)$ for $t\in[0,1]$,
\item $Q_5(t) :=  (F^{k,r}_{\hat{H}(1,(1-t))})_\sharp P(1)$ for $t\in[0,1]$.
\end{itemize}
This replaces each path $P:[0,1] \to \mathcal{Z}_n(N;\mathbf{F},\mathbb{Z}_2)$ in parentheses in (\ref{concat}) with a new path $Q:[0,1] \to \mathcal{Z}_n(N;\mathbf{F},\mathbb{Z}_2)$, and so it changes $\Phi$ on the interval $[a_1,a_2]$. We do  similar replacements for each interval $[a_{2l-1},a_{2l}]$, and we get a new sweepout $\Psi:[0,1]\to \mathcal{Z}_n(N;\mathbf{F};\mathbb{Z}_2)$ homotopic in the $\mathbf{F}$-topology to $\Phi$. This finishes the definition of the $k^{th}$ move for $\Phi$. The main point of this construction is that now, there is $\hat{\mu}_k>0$ depending only on $k$ such that
\begin{equation} \label{covi}
|\Psi|([0,1]) \cap \mathcal{N}^{(k)}_{\hat{\mu}_k} = \varnothing.
\end{equation}
Another useful property is that a $k^{th}$ move does not move images of the original sweepout $\Phi$ too far away: there is a constant $\gamma_k$ converging to $0$ as $k\to \infty$ such that for any $[b_{x},b_{x+1}]$, $r_x$ as above, 
\begin{equation} \label{unajout}
|\Psi|([b_{x},b_{x+1}]) \subset \mathbf{B}^{\mathbf{F}}_{\gamma_k} (\Sigma_{k,r_x}).
\end{equation}
This property follows from (\ref{alai}); in fact $\gamma_k$ can be forced to converge arbitrarily fast to $0$ if $\varepsilon_k$ and  $\theta_k$ as in (\ref{alai}) are chosen to converge sufficiently fast to $0$.

\vspace{1em}

Now remember that we are given $\{\Phi_i\}$ as in the statement of the theorem. For each $i\geq 0$, define $\tilde{\Psi}_i:[0,1]\to \mathcal{Z}_n(N;\mathbf{F};\mathbb{Z}_2)$ to be the sweepout obtained by applying successively $k^{th}$ moves starting with $\Phi_i$, for $k=0,1,...,i-1,i$ (in that order). Hence $\tilde{\Psi}_i$ is the result of modifying $\Phi_i$ a total of  $i+1$ times. It is clear by construction that 
$$\mathbf{L}(\{\tilde{\Psi}_i\}_{i\in \mathbb{N}}) \leq L.$$ 
By (\ref{covi}) we begin with $\tilde{\Psi}_0$ which satisfies
$$|\tilde{\Psi}_0|([0,1]) \cap \mathcal{N}^0_{\hat{\mu}_0} = \varnothing.$$
Clearly if $\varepsilon_1,\varepsilon_2...$ and $\gamma_1,\gamma_2,...$ are chosen to be decreasing fast enough (depending on $\hat{\mu}_0$), then by (\ref{unajout}) for any $i\geq 0$:
$$|\tilde{\Psi}_i|([0,1]) \cap \mathcal{N}^0_{\hat{\mu}_0/2} = \varnothing.$$
We can continue similarly by induction on $i$ and choose $\{\varepsilon_j\}_{j\geq i}$ and $\{\gamma_j\}_{j\geq i}$ small enough (depending on $\hat{\mu}_{i-1}$) so that eventually for all $i_0$ and $i\geq i_0$,
$$|\tilde{\Psi}_i|([0,1]) \cap \mathcal{N}^{i_0}_{\hat{\mu}_{i_0}/2} = \varnothing.$$
This is enough to conclude the proof.

\end{proof}

We will not need Deformation Theorem B of \cite{MaNeindexbound}. However, Deformation Theorem C of \cite{MaNeindexbound} will be useful. Before stating it, consider a minimal hypersurface $S$ which is degenerate stable of Type II. Then we associate to $S$ some squeezing maps like the ones in \cite[Proposition 5.7]{MaNeindexbound}. Suppose that $S$ is 2-sided embedded in the interior of $N$, the other cases (1-sided or boundary component) are similar. A neighborhood of $S$ is foliated by hypersurfaces with mean curvature vector pointing towards $S$ when non-zero (Appendix A, Lemma \ref{degenerate}). Let $f$ be a real function defined on such a neighborhood with $\nabla f\neq 0$ such that $S(s) = f^{-1}(s)$ ($s\in[-1,1]$) are the smooth embedded hypersurfaces of the foliation and $S(0)=S$ (in particular, $s\langle \nabla f , \overrightarrow{H}(S(s)) \rangle \leq 0$). Write $X=\nabla f /|\nabla f|^2$, and let $\phi:S\times [-1,1]\to N$ such that $\frac{\partial \phi}{\partial s} (x,s) = X(\phi(x,s))$ and $\phi(S,s) = S(s)$ for all $s\in[-1,1]$. Set $\Omega_r = \phi(S\times (-r,r))$ and define the maps 
$$P_t:\Omega_1\to\Omega_1 \text{ such that } P_t(\phi(x,s)) = \phi(x,s(1-t)) \text{ for } t\in[0,1].$$ 

\begin{lemme}\label{squeeze}
 Let $S$, $P_t$ be as above. There exists $r_0>0$ such that 
\begin{enumerate}
\item $P_t$ satisfies items (i), (ii), (iii) of \cite[Proposition 5.7]{MaNeindexbound},
\item for all $V\in \mathcal{V}_n(\Omega_{r_0})$ and every connected component $\Omega$ of $\Omega_{r_0}$, the function $t \mapsto \|(P_t)_\sharp V\|(\Omega)$ is a strictly decreasing function of $t$, unless $\spt(V) \cap \Omega \subset S\cap\Omega$, in which case it is constant.
\end{enumerate}
\end{lemme}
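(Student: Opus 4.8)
The plan is to reduce everything to a first-variation computation for the foliation $\{S(s)\}$ and a uniform-in-$t$ control of the resulting geometry on a small enough slab $\Omega_{r_0}$. First I would set up coordinates: use the flow $\phi:S\times[-1,1]\to N$ of $X=\nabla f/|\nabla f|^2$, which satisfies $\phi(\cdot,s)(S)=S(s)$ and $f(\phi(x,s))=s$, so that in the coordinates $(x,s)$ the metric splits as $g=\lambda(x,s)^2\,ds^2+h_s$ where $h_s$ is the induced metric on $S(s)$ and $\lambda=1/|\nabla f|$. The map $P_t$ is $(x,s)\mapsto(x,(1-t)s)$ in these coordinates, so it is a diffeomorphism of $\Omega_1$ fixing $S(0)=S$ and its family $\{P_t\}_{t\in[0,1]}$ is a smooth isotopy from $\mathrm{Id}$ to the squeeze onto $S$. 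Item (1) — that $P_t$ satisfies (i),(ii),(iii) of \cite[Proposition 5.7]{MaNeindexbound} — is then essentially formal: those properties ask that $P_0=\mathrm{Id}$, that each $P_t$ is a diffeomorphism onto its image contracting toward $S$ (with $P_1(\Omega_1)=S$ up to the convention there), and that the isotopy is continuous in the appropriate topology; all of these are immediate from the explicit formula. So the content of the lemma is item (2).

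For item (2), fix a connected component $\Omega$ of $\Omega_{r_0}$ and a varifold $V\in\mathcal{V}_n(\Omega_{r_0})$. Since mass is subadditive under pushforward by Lipschitz maps and $P_t$ acts slice-wise, it suffices to prove the strict monotonicity for the building block $V=$ an $n$-plane (or rather its disintegration over $\Omega$): I would write, using the area formula and the slicing of $V$ by the function $f$,
\[
\|(P_t)_\sharp V\|(\Omega)=\int_{G_n(\Omega)} J_{P_t}(x,s,\pi)\, dV(x,s,\pi),
\]
where $J_{P_t}$ is the $n$-dimensional Jacobian of $P_t$ restricted to the plane $\pi$, and then differentiate in $t$. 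The key geometric input is that each slice $S(s)$ has mean curvature vector pointing toward $S$ when nonzero, i.e. $s\langle\nabla f,\vec H(S(s))\rangle\le0$; infinitesimally, moving a slice toward $S$ does not increase its $n$-area, and moving any piece of an $n$-plane toward $S$ along the normal direction $X$ strictly decreases area unless that piece is tangent to the foliation, i.e. unless $\pi=T_{(x,s)}S(s)$. The degenerate-stable-of-Type-II hypothesis (via Lemma \ref{degenerate}) is exactly what guarantees the foliation exists with this one-sided mean-convexity, so that $\frac{\partial}{\partial t}J_{P_t}(x,s,\pi)<0$ for $s\ne0$ and all $\pi$ with strict inequality off the tangent planes, and $=0$ when $(x,s)\in S$. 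Integrating, $t\mapsto\|(P_t)_\sharp V\|(\Omega)$ is non-increasing, and strictly decreasing unless $V$ is supported, inside $\Omega$, on slices $S(s)$ tangentially — which forces $\spt(V)\cap\Omega\subset S\cap\Omega$ since only the slice $s=0$ is fixed and any other tangential piece still gets strictly contracted by the $ds$-factor unless $s=0$.

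The main obstacle I anticipate is making the strictness uniform: one needs a single $r_0>0$, independent of $V$, such that on $\Omega_{r_0}$ the Jacobian derivative $\partial_t J_{P_t}$ is strictly negative (not merely $\le0$) for every $n$-plane $\pi$ and every point with $s\ne0$, including planes nearly tangent to the foliation. Near such planes the first-order term in the mean-curvature direction can be small, so one has to check that the contribution of the pure $ds$-contraction (the factor $(1-t)$ in the $s$-direction, which shrinks any plane not contained in a single slice) dominates, while for planes contained in a slice $S(s)$ with $s\ne0$ it is the one-sided mean convexity, quantified on the compact slab $\overline{\Omega_{r_0}}$ away from $s=0$... but that last region degenerates as $s\to0$, so the careful bookkeeping is to split $\Omega_{r_0}$ into $|s|\ge\delta$ (use quantified mean convexity) and $|s|<\delta$ (use that $P_t$ there is a near-identity contraction whose leading behavior is still strictly area-decreasing off the tangent planes, by a Taylor expansion in $s$ of the second fundamental form of the foliation). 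Shrinking $r_0$ controls the $C^2$-geometry of the foliation enough to close this estimate, exactly as in the proof of \cite[Proposition 5.7]{MaNeindexbound}, to which I would refer for the analogous computation.
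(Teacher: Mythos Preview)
Your computation for item (1) and the general first-variation setup for item (2) are fine, and the decomposition into tangential versus transversal $n$-planes is the right picture. The gap is in the strictness argument. You aim to show that $\partial_t\|(P_t)_\sharp V\|(\Omega)<0$ at every $t$ whenever $\spt(V)\cap\Omega\not\subset S$, and this is precisely what fails in the Type II case. For a degenerate stable $S$ of Type II, Lemma \ref{degenerate} only guarantees that each leaf $S(s)$ has mean curvature either identically zero or pointing toward $S$; strict monotonicity of the area function $A$ on $(0,\delta_2)$ and $(-\delta_2,0)$ allows some slices $S(s_0)$ with $s_0\neq 0$ to be minimal (only the set of such $s_0$ has empty interior). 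If $V$ is supported tangentially on such a minimal slice, the first variation at $t=0$ vanishes. Your claim that ``any other tangential piece still gets strictly contracted by the $ds$-factor'' is not correct: a plane $\pi=T_{(x,s_0)}S(s_0)$ has no $\partial_s$-component, so the $(1-t)$-scaling in the $s$-direction contributes nothing to the Jacobian on $\pi$; the only mechanism left is the mean curvature, which is zero at $s_0$.

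The paper's proof sidesteps this by \emph{not} proving strict negativity of the derivative. It observes that the derivative is always $\le 0$ (the computation you outline, identical to \cite[Proposition 5.7]{MaNeindexbound}), and then uses the Type II hypothesis to say: on any subinterval $(a,b)\subset[0,1]$, there is some $t'\in(a,b)$ where the derivative is strictly negative. Indeed, $(P_{t'})_\sharp V$ lives on slices $S((1-t')s)$, and since the minimal slices form a set with empty interior in the $s$-parameter, most values of $t'$ push the support onto non-minimal slices, where strict negativity holds. Nonpositive derivative everywhere plus strict negativity on a dense set of times gives strict monotonicity of $t\mapsto\|(P_t)_\sharp V\|(\Omega)$. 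This is a short observation once you have the nonpositivity, but it is the point that distinguishes Type II from the strictly stable case, and your write-up misses it.
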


\begin{proof}
The only minor change compared to \cite[Proposition 5.7]{MaNeindexbound} is that $t \mapsto \|(P_t)_\sharp V\|(\Omega)$ does not have strictly negative derivative. However, by inspecting the computations in its proof, we see that $t \mapsto \|(P_t)_\sharp V\|(\Omega)$ always has nonpositive derivative and for any $a<b\in[0,1]$, $t \mapsto \|(P_t)_\sharp V\|(\Omega)$ must have negative derivative at some time $t'\in(a,b)$ because $S$ is of Type II, except when $\spt(V) \cap \Omega \subset S\cap\Omega$.
\end{proof}

We observe that if $S$ is 1-sided, or a boundary component of $N$, the previous discussion still applies to a neighborhood of $S$, on which one can define squeezing maps $P_t$. See Appendix B for the definition of $\mathbf{\Lambda}(\{\Phi_i\})$.

\begin{theo}[Deformation Theorem C, \cite{MaNeindexbound}]
Suppose the sequence $\{\Phi_i\}$ is pulled-tight (every varifold in $\mathbf{\Lambda}(\{\Phi_i\})$ with $\|V\|(N)=L$ is stationary). Let $\{\Sigma^{(1)},...,\Sigma^{(Q)}\} \subset \mathcal{V}_n(N)$ be a collection of stationary integral varifolds such that for every $1\leq q\leq Q$:
\begin{itemize}
\item the support of $(\Sigma^{(q)})$ for $1\leq q\leq Q$ is a closed embedded minimal hypersurface $S^{(q)}$ whose components are either in the interior of $N$ or a component of $\partial N$,
\item each component of $S^{(q)}$ (its double cover if not 2-sided) is strictly stable or degenerate stable of Type II,
\item $L=\|\Sigma^{(q)}\|(N)$.
\end{itemize} 
Then there exist $\xi>0$, $j_0\in \mathbb{N}$ so that for all $i\geq j_0$ one can find $\Psi_i:[0,1]\to \mathcal{Z}_n(N;\mathbf{F};\mathbb{Z}_2)$ such that
\begin{enumerate}[label=(\roman*)]
\item $\Psi_i$ is homotopic to $\Phi_i$ with fixed endpoints in the flat topology,
\item $\mathbf{L}(\{\Psi_i\}_{i\in \mathbb{N}}) \leq L$,
\item $$\mathbf{\Lambda}(\{\Psi_i\}) \subset \big(\mathbf{\Lambda}(\{\Phi_i\})\backslash \cup_{q=1}^Q \mathbf{B}^\mathbf{F}_\xi(\Sigma^{(q)})\big) \cup \big(\mathcal{V}_n(N) \backslash  \mathbf{B}^\mathbf{F}_\xi(\Gamma)\big),$$
where $\Gamma$ is the collection of all stationary integral varifolds $V\in \mathcal{V}_n(N)$ with $L=\|V\|(N)$.
\end{enumerate}
\end{theo}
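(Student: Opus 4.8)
The plan is to follow the proof of \cite[Deformation Theorem C]{MaNeindexbound} essentially verbatim, making only the modifications forced by the fact that some of the $S^{(q)}$ may be degenerate stable of Type II (rather than strictly stable) and that some components may lie on $\partial N$. First I would recall the structure of the original argument: around each $\Sigma^{(q)}$ one uses squeezing maps $P^{(q)}_t$ defined on a tubular neighborhood $\Omega^{(q)}_{r_0}$, which retract varifolds supported near $S^{(q)}$ onto $S^{(q)}$ while not increasing mass, together with the observation that since every $V\in\mathbf\Lambda(\{\Phi_i\})$ with $\|V\|(N)=L$ is stationary (pulled-tight), any varifold in a small $\mathbf F$-ball of $\Sigma^{(q)}$ that is a min-max limit must actually coincide with $\Sigma^{(q)}$ on the neighborhood, or else its mass drops below $L$ under the squeeze. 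The quantitative version of this dichotomy, localized in a thin slab of controlled width, is what lets one push $\Phi_i$ off a small ball around each $\Sigma^{(q)}$ without creating new min-max varifolds, at the cost of possibly creating limits in $\mathcal V_n(N)\setminus\mathbf B^{\mathbf F}_\xi(\Gamma)$ (i.e. non-stationary varifolds strictly below the width), which is exactly the content of item (iii).

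The key steps, in order, would be: (1) For each $q$, invoke Lemma \ref{squeeze} (and the remark following it for the $1$-sided / boundary cases) to produce squeezing maps $P^{(q)}_t:\Omega^{(q)}_{r_0}\to\Omega^{(q)}_{r_0}$ satisfying items (i)–(iii) of \cite[Proposition 5.7]{MaNeindexbound}, where now the mass $t\mapsto\|(P^{(q)}_t)_\sharp V\|(\Omega)$ is merely non-increasing but strictly decreasing somewhere on every subinterval unless $\spt(V)\cap\Omega\subset S^{(q)}\cap\Omega$. Choose the neighborhoods $\Omega^{(q)}_{r_0}$ pairwise disjoint (possible since the $S^{(q)}$ are distinct closed embedded hypersurfaces with the same mass $L$, hence disjoint). (2) Run the argument of \cite[Deformation Theorem C]{MaNeindexbound} on each $\Omega^{(q)}_{r_0}$ separately: choose $\xi>0$ small, and for $\Phi_i$ with $i$ large, deform $\Phi_i$ on the part of $[0,1]$ where $|\Phi_i|$ enters $\mathbf B^{\mathbf F}_\xi(\Sigma^{(q)})$ by post-composing with the squeeze $P^{(q)}_t$ interpolated to the identity near the boundary of the slab, exactly as in \cite{MaNeindexbound}. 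The homotopy with fixed endpoints (item (i)) and the mass bound $\mathbf L(\{\Psi_i\})\le L$ (item (ii)) are formal consequences of the squeeze not increasing mass, just as before. (3) Verify item (iii): suppose $V\in\mathbf\Lambda(\{\Psi_i\})$. If $\|V\|(N)<L$ then $V\in\mathcal V_n(N)\setminus\mathbf B^{\mathbf F}_\xi(\Gamma)$ once $\xi$ is smaller than the (positive, by pulled-tightness and compactness) distance from $\Gamma$ to the set of non-stationary accumulation varifolds below level $L$ — here one uses that $\Gamma$ is at definite $\mathbf F$-distance from any varifold of mass $<L$ that arises as such a limit, via an argument identical to the strictly-stable case but invoking the \emph{eventual} strict mass decrease from Lemma \ref{squeeze}(2) in place of a uniformly negative derivative. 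If $\|V\|(N)=L$, then $V$ is stationary by pulled-tightness, and the quantitative squeeze forces $V\notin\mathbf B^{\mathbf F}_\xi(\Sigma^{(q)})$ for each $q$ (otherwise its mass would have been strictly lowered by the deformation, contradicting that it is a limit of $|\Psi_i|$); since the deformation is supported in $\cup_q\Omega^{(q)}_{r_0}$ and does not increase mass, $V\in\mathbf\Lambda(\{\Phi_i\})$, giving $V\in\mathbf\Lambda(\{\Phi_i\})\setminus\cup_q\mathbf B^{\mathbf F}_\xi(\Sigma^{(q)})$.

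The main obstacle is step (3) in the degenerate case: in \cite{MaNeindexbound} one exploits a \emph{strictly negative} derivative of the mass under the squeeze to get a uniform, quantitative mass drop on a fixed-width slab, and this uniform gap is what produces the constants $\xi$ and $j_0$. When $S^{(q)}$ is only degenerate stable of Type II, Lemma \ref{squeeze}(2) gives strict decrease only \emph{somewhere} on each subinterval, with no a priori uniform rate. The fix is to argue by contradiction and compactness rather than by a quantitative estimate: if no uniform $\xi$ worked, one would extract a sequence of varifolds $V_i$ with $\mathbf F(V_i,\Sigma^{(q)})\to 0$, $\|V_i\|(N)\to L$, each surviving the squeeze with mass essentially $L$; by Lemma \ref{squeeze}(2) this forces $\spt(V_i)\cap\Omega\subset S^{(q)}\cap\Omega$ in the limit, i.e. $V_i\to\Sigma^{(q)}$ as varifolds \emph{with multiplicity}, and then one runs the Type II dichotomy of Appendix A on $\Sigma^{(q)}$ itself to rule this out — precisely as in the proof of Proposition \ref{nonbumpy} and in the handling of Type II components elsewhere in the paper. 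This is the only genuinely new input beyond transcribing \cite{MaNeindexbound}; everything else is bookkeeping that carries over unchanged because the squeezes still satisfy items (i)–(iii) of \cite[Proposition 5.7]{MaNeindexbound}.
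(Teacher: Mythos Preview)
Your proposal is correct and takes essentially the same approach as the paper, whose entire proof is two sentences stating that the argument is almost identical to \cite[Deformation Theorem C]{MaNeindexbound} with Lemma~\ref{squeeze} replacing \cite[Proposition~5.7]{MaNeindexbound} for the Type~II components. Your compactness argument to recover uniformity from the weaker conclusion of Lemma~\ref{squeeze}(2) (strictly decreasing mass but not strictly negative derivative) is a reasonable way to make explicit what the paper leaves implicit; one small inaccuracy is that the supports $S^{(q)}$ need not have $n$-volume $L$ (the varifold mass accounts for multiplicities) nor be pairwise distinct, but since connected closed embedded minimal hypersurfaces are either equal or disjoint, one can define compatible squeeze maps on a neighborhood of $\bigcup_q S^{(q)}$ and the argument goes through unchanged.
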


\begin{proof}
The proof is almost identical to the case of bumpy metrics \cite{MaNeindexbound}. The first difference is that we use Lemma \ref{squeeze} instead of \cite[Proposition 5.7]{MaNeindexbound} for degenerate stable minimal hypersurfaces of Type II. The second difference is that \cite[Corollary 5.8]{MaNeindexbound} is replaced with the following statement: for $q\in \{1,...,Q\}$ there exists $\varepsilon_0=\varepsilon_0(\Sigma^{(q)})$ so that every stationary integral varifold $V$ in $\mathbf{B}^{\mathbf{F}}_{\varepsilon_0}(\Sigma^{(q)})$ of total mass $L$ coincides with $\Sigma^{(q)}$. This statement is shown similarly to \cite[Corollary 5.8]{MaNeindexbound}, using Lemma \ref{squeeze} (2) and the constancy theorem for varifolds.
\end{proof}

Finally we need a last deformation theorem in order to deal with degenerate stable minimal hypersurfaces of Type I, which has no analogue in the generic case \cite{MaNeindexbound}. Let $S$ be such a connected hypersurface (it is necessarily 2-sided and contained in the interior of $N$). Note that one can again define squeezing maps here. A neighborhood inside $\interior(N)$ of $S$ is foliated by hypersurfaces with mean curvature vector pointing towards a fixed direction when non-zero. Let $f$ be a real function defined on such a neighborhood with $\nabla f\neq \varnothing$ such that $S(s) = f^{-1}(s)$ ($s\in[-1,1]$) are the smooth embedded hypersurfaces of the foliation, verifying $S(0) = S$ and $\langle \nabla f , \overrightarrow{H}(S(s)) \rangle \leq 0$). Write $X=\nabla f /|\nabla f|^2$, and let $\phi:S\times [-1,1]\to N$ such that $\frac{\partial \phi}{\partial s} (x,s) = X(\phi(x,s))$ and $\phi(S,s) = S(s)$ for all $s\in[-1,1]$. Set $\Omega'_r = \phi(S\times [-r,r))$ and define the maps 
$$P'_{r,t}:\Omega_r\to\Omega_r \text{ such that } P'_{r,t}(\phi(x,s)) = \phi(x,(s+r)(1-t)-r) \text{ for } t\in[0,1].$$ 
The following lemma is proved as Lemma \ref{squeeze} and \cite[Proposition 5.7]{MaNeindexbound}.

\begin{lemme}\label{squeeze2}
Let $S$ be as above. There exists $r_1>0$ such that $P'_{r_1,t}: \Omega'_{r_1} \to \Omega'_{r_1}$ satisfies:
\begin{enumerate}
\item $P'_{r_1,0}(x)=x$ for all $x\in \Omega'_{r_1}$, $P_{r_1,t}(y) = y $ for all $y\in S(-r_1)$, $t\in[0,1]$,
\item $P'_{r_1,t}(\phi(S\times [-r_1,r))\subset \phi(S\times [-r_1,r))$ for all $t\in[0,1]$, $r\leq r_1$, and $P'_{r_1,1}(\Omega'_{r_1})=S(-r_1)$,
\item the map $P'_{r_1,t} : \Omega'_{r_1}\to \Omega'_{r_1}$ is a diffeomorphism onto its image for $0\leq t<1$, 
\item for all $V\in \mathcal{V}_n(\Omega'_{r_1})$,
the function $t \mapsto \|(P'_{r_1,t})_\sharp V\|(\Omega'_{r_1})$ is a strictly decreasing function of $t$, unless $\spt(V) \subset S(-r_1)$, in which case it is constant,
\item if $V_0$ is a stationary integral varifold with support $S$, then for all $\varepsilon>0$, there are $\kappa=\kappa(V_0,\varepsilon)>0$, $\varepsilon'=\varepsilon'(V_0,\varepsilon)>0$ and $t'=t'(V_0, \varepsilon)\in(0,1)$ so that for all $V\in \mathcal{V}_n(\Omega'_{r_1})\cap \mathbf{B}^\mathbf{F}_{\varepsilon'}(V_0)$ and for all $s\in[0,t']$, 
$$(P'_{r_1,s})_\sharp V \in \mathbf{B}^\mathbf{F}_{\varepsilon}(V_0) \text{ and }$$
$$ \|(P'_{r_1,t'})_\sharp V\|(\Omega'_{r_1})\leq \|V_0\|(\Omega'_{r_1}) - \kappa.$$
\end{enumerate}
\end{lemme}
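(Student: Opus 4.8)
The plan is to reduce Lemma~\ref{squeeze2} to the same computations underlying Lemma~\ref{squeeze} and \cite[Proposition 5.7]{MaNeindexbound}, the only novelty being that here we are squeezing a \emph{one-sided} collar $\phi(S\times[-r,r))$ onto its ``closed end'' $S(-r_1)$ rather than symmetrically onto the middle leaf $S$. First I would fix the foliation function $f$ and the vector field $X=\nabla f/|\nabla f|^2$ as in the statement, so that the flow $\phi$ of $X$ is well defined on $S\times[-1,1]$ and carries leaves to leaves with $\langle \nabla f,\overrightarrow{H}(S(s))\rangle\le 0$ throughout; this sign condition (Type~I: the mean curvature vector points in one fixed transverse direction) is what will make every leaf of the collar mean convex \emph{with respect to the squeezing direction}. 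The maps $P'_{r_1,t}(\phi(x,s))=\phi(x,(s+r_1)(1-t)-r_1)$ are then, for each fixed $t<1$, diffeomorphisms of $\Omega'_{r_1}$ onto the sub-collar $\phi(S\times[-r_1,(s_{\max}+r_1)(1-t)-r_1))$, and at $t=1$ they collapse everything to the leaf $S(-r_1)$; items (1)--(3) are immediate from this explicit formula, exactly as in \cite[Proposition 5.7]{MaNeindexbound}.

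For item (4) I would run the standard first-variation computation: writing $(P'_{r_1,t})_\sharp V$ and differentiating $t\mapsto \|(P'_{r_1,t})_\sharp V\|(\Omega'_{r_1})$, the derivative is an integral over the varifold of (tangential divergence of the infinitesimal generator of $P'_{r_1,t}$), which by the leaf-by-leaf structure splits into a term along the leaves and a normal term governed by the mean curvature of the leaves. The generator of $P'_{r_1,t}$ is, up to the reparametrization $(s+r_1)(1-t)$, a negative multiple of $X$ pushing every point toward $S(-r_1)$; since $\langle \nabla f,\overrightarrow{H}\rangle\le 0$ on every leaf and the collar is one-sided (so we never cross a leaf on which the inequality could go the other way), the integrand is $\le 0$ pointwise, hence the function is non-increasing. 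Strict monotonicity for a Type~I hypersurface follows exactly as in Lemma~\ref{squeeze}: the derivative is nonpositive at all times and, by the Type~I hypothesis, it is strictly negative on a dense-in-time set unless the varifold is supported on a single leaf that does not move, i.e. $\spt(V)\subset S(-r_1)$ --- in which case $P'_{r_1,t}$ fixes $\spt(V)$ and the mass is constant. (Here one uses that strict decrease on a dense set of times plus non-increase everywhere forces strict decrease on every subinterval, hence global strict decrease.)

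For item (5), the quantitative statement near a stationary $V_0$ with $\spt(V_0)=S$, I would argue by an $\mathbf{F}$-continuity-plus-compactness argument as in \cite{MaNeindexbound}: first, since $P'_{r_1,s}\to \mathrm{Id}$ uniformly in $C^1$ as $s\to 0$, there are $\epsilon'>0$ and $t'\in(0,1)$ with $(P'_{r_1,s})_\sharp V\in \mathbf{B}^\mathbf{F}_\epsilon(V_0)$ for all $V\in\mathbf{B}^\mathbf{F}_{\epsilon'}(V_0)$ and $s\in[0,t']$; second, by item~(4) applied to $V_0$ itself, $\|(P'_{r_1,t'})_\sharp V_0\|(\Omega'_{r_1})<\|V_0\|(\Omega'_{r_1})$ strictly (because $\spt(V_0)=S\not\subset S(-r_1)$), so there is $2\kappa>0$ with $\|(P'_{r_1,t'})_\sharp V_0\|(\Omega'_{r_1})\le \|V_0\|(\Omega'_{r_1})-2\kappa$; and third, $V\mapsto \|(P'_{r_1,t'})_\sharp V\|(\Omega'_{r_1})$ is continuous in the $\mathbf{F}$-topology (upper semicontinuity of mass under a fixed smooth pushforward suffices for the bound in one direction), so shrinking $\epsilon'$ gives $\|(P'_{r_1,t'})_\sharp V\|(\Omega'_{r_1})\le \|V_0\|(\Omega'_{r_1})-\kappa$ for all $V\in\mathbf{B}^\mathbf{F}_{\epsilon'}(V_0)$.

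The main obstacle I expect is the sign bookkeeping in item (4): one must be careful that the one-sided collar $\phi(S\times[-r_1,r_1))$ is chosen narrow enough ($r_1$ small) that the Type~I foliation really does have $\langle\nabla f,\overrightarrow{H}(S(s))\rangle\le 0$ for \emph{all} $s\in[-r_1,r_1]$ with equality only on $S(0)=S$ and possibly nowhere else, so that the squeezing toward $S(-r_1)$ never picks up a wrong-signed contribution from the mean curvature term; this is exactly the role of the ``there exists $r_1>0$'' in the statement, and once $r_1$ is fixed this way the rest is the verbatim computation of \cite[Proposition 5.7]{MaNeindexbound} with $r_1$ in place of the symmetric radius. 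Everything else is routine, so the proof reduces to the remark ``proved as Lemma~\ref{squeeze} and \cite[Proposition 5.7]{MaNeindexbound}'' together with the observation that for a Type~I hypersurface the relevant monotonicity is strict away from the collapsed leaf.
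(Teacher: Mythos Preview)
Your proposal is correct and follows essentially the same route as the paper: items (1)--(4) reduce to the computations of Lemma~\ref{squeeze} and \cite[Proposition~5.7]{MaNeindexbound} with the one-sided collar replacing the symmetric one, and item (5) is derived from the $\mathbf{F}$-continuity of $(P'_{r_1,s})_\sharp$ exactly as the paper indicates. Your expansion of item~(5) into the three steps (choose $t'$ so the pushforward stays in $\mathbf{B}^\mathbf{F}_\epsilon(V_0)$, apply item~(4) to $V_0$ to get a definite gap $2\kappa$, then shrink $\epsilon'$ by continuity) is a faithful unpacking of the paper's one-line justification.
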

\begin{proof}
Compared to the proof of Lemma \ref{squeeze}, the only new point is bullet (5), which follows from the continuity of $(P'_{r_1,s})_\sharp$ in the $\mathbf{F}$-topology.
\end{proof}

For the last deformation theorem, we will assume that
\begin{align*}
[\star]_L:  &\text{ any degenerate stable minimal hypersurface in $N$ }\\
&\text{ of $n$-volume at most $L$ is of Type I or II.} 
 \end{align*}
Let $\mathcal{T}(L)$ be the family of stationary integral varifolds in $\mathcal{V}_n(N)$ of total mass $L$ whose support is a stable smooth closed embedded minimal hypersurface and for which at least one of the components is degenerate stable of Type I.

\begin{remarque} \label{oulll}

(1) By \cite{SchoenSimon} for any sequence $V_i\in \mathcal{T}(L)$, the supports $\spt(V_i)$ converge subsequently smoothly to a stable minimal hypersurface $S_0$. Assuming $[\star]_L$, the components of $S_0$ are either strictly stable, or degenerate stable of Type I, II. At least one of the components of $S_0$ is degenerate stable of Type I. Indeed, we can write $V_i = a_{1,i}|S_i^1|+...+a_{{k_i},i}|S_i^{k_i}|$ where $S_i^1,...,S_i^{k_i}$ are disjoint minimal hypersurfaces, $k_i$, $a_{1,i}$,...,$a_{{k_i},i}$ are bounded independently of $i$ and we can suppose that these sequence of integers all stabilize to respectively $k$, $a_1$,...,$a_k$. Then $V_i = a_{1,i}|S_i^1|+...+a_{{k_i},i}|S_i^{k_i}|$ converges to $V=a_{1}|S^1|+...+a_{{k}}|S^{k}|$ where $S^l$ are the components of $S_0$ and are the smooth limit of $S^{l}_i$. But if $S^l$ are all strictly stable or degenerate stable of Type II, \cite[Proposition 5.7]{MaNeindexbound} and Lemma \ref{squeeze} would imply that $S^l_i=S^l$ for $l$ large since the mass $\|V_i\|(N)$ is constant equal to $L$. That contradicts the fact that $V_i\in \mathcal{T}(L)$. As a consequence of this discussion, assuming $[\star]_L$, $\mathcal{T}(L)$ is a compact subset of $\mathcal{V}_n(N)$ and for a given $\varepsilon>0$, the quantities $\kappa$, $\varepsilon'$ and $t'$ of Lemma \ref{squeeze2} can be chosen independently of $V \in \mathcal{T}(L)$. 

\vspace{1em}
(2) Similarly, under condition $[\star]_L$, the number of stationary integral varifolds $\Sigma$ of mass $L$ satisfying the assumptions of Deformation Theorem C is finite: indeed otherwise, we get a converging sequence of such varifolds $\Sigma^{(i)}$ whose support is a stable minimal hypersurface with components (the double cover if not $2$-sided) either strictly stable or degenerate stable of Type II. By compactness and a Jacobi field argument, the limit varifold has support a degenerate stable minimal hypersurface. By $[\star]_L$, its components are of Type I or II. By arguments like (1) of this remark, this is impossible.

\end{remarque}

Here is the last deformation theorem, which is new compared to \cite{MaNeindexbound}.

\begin{theo}[Deformation Theorem D]
Assuming $[\star]_L$, given $\{\Phi_i\}_{i\in \mathbb{N}}$, there is another sequence $\{\Psi_i\}_{i\in \mathbb{N}}$ such that
\begin{enumerate}[label=(\roman*)]
\item $\{\Psi_i\}_{i\in \mathbb{N}}$ is homotopic to $\{\Phi_i\}_{i\in \mathbb{N}}$ with fixed endpoints in the flat topology for all $i\in \mathbb{N}$,
\item $\mathbf{L}(\{\Psi_i\}_{i\in \mathbb{N}})\leq L,$
\item 
there exists $\hat{\varepsilon}>0$, $j_1\in\mathbb{N}$ so that for all $i\geq j_1$, 
$$\inf\{\mathbf{F}(\Sigma, |\Psi_i|([0,1])) ; \Sigma\in \mathcal{T}(L)\} \geq \hat{\varepsilon}.$$

\end{enumerate}
\end{theo}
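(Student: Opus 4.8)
The plan is to mimic the proof of Deformation Theorem C above, replacing the squeezing maps of \cite[Proposition 5.7]{MaNeindexbound} with the new squeezing maps $P'_{r_1,t}$ from Lemma \ref{squeeze2}, which are adapted to degenerate stable hypersurfaces of Type I. The key structural difference from Type II is that for Type I the foliation has its mean curvature vectors all pointing in \emph{one} fixed transverse direction, so $P'_{r_1,t}$ squeezes the whole neighborhood $\Omega'_{r_1}$ onto the single boundary leaf $S(-r_1)$ rather than onto $S$ itself; the mass is strictly decreasing along the squeeze (Lemma \ref{squeeze2}(4)) and, crucially, bullet (5) of Lemma \ref{squeeze2} gives a uniform-in-$V$ definite mass drop after a controlled time $t'$ while staying $\mathbf{F}$-close to $V_0$.

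First I would invoke Remark \ref{oulll}: assuming $[\star]_L$, $\mathcal{T}(L)$ is a compact subset of $\mathcal{V}_n(N)$, and by the smooth subsequential convergence of supports one can choose a \emph{finite} cover of $\mathcal{T}(L)$ by balls $\mathbf{B}^\mathbf{F}_{\epsilon'_m}(V_0^{(m)})$, $m=1,\dots,M$, on each of which the conclusion of Lemma \ref{squeeze2}(5) holds with parameters $\kappa_m, \epsilon'_m, t'_m$ (and with $r_1$ chosen uniformly as in Remark \ref{oulll}). Finiteness is what makes Type I tractable even though, unlike the $2$-unstable case in Deformation Theorem A, there is no ``spreading out'' mechanism — we simply need a single uniform push. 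Then, exactly as in the proof of Deformation Theorem A above, for each $i$ large I would identify the subintervals $U_{i}\subset[0,1]$ where $|\Phi_i(x)|$ enters a small neighborhood $\mathcal{N}$ of $\mathcal{T}(L)$, decompose each such interval into pieces whose $\Phi_i$-image lies in a single ball $\mathbf{B}^\mathbf{F}_{\epsilon'_m}(V_0^{(m)})$, and on each piece apply the squeezing deformation: concatenate the path $t\mapsto (P'_{r_1,t'_m})_\sharp\big(\text{original path}\big)$-type homotopies, using Lemma \ref{squeeze2}(1)--(3) to control what happens near the fixed endpoints (which by construction already lie outside $\mathcal{N}_{\eta/2}$, i.e.\ at definite $\mathbf{F}$-distance from $\mathcal{T}(L)$) and using (4)--(5) to guarantee the deformed image is pushed to mass $\leq L-\kappa_m<L$ on the relevant region, hence uniformly away from every $\Sigma\in\mathcal{T}(L)$ which has mass exactly $L$. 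Since the deformation never increases mass and equals $\Phi_i$ outside $U_i$, we get $\mathbf{L}(\{\Psi_i\})\leq L$ and homotopy with fixed endpoints in the flat topology, giving (i) and (ii).

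For (iii): suppose not, so there is $\Sigma\in\mathcal{T}(L)$ and $t_i\in[0,1]$ with $\mathbf{F}(|\Psi_i|(t_i),\Sigma)\to 0$. For $i$ large $t_i$ must lie in a modified subinterval (outside $U_i$ the map is unchanged and $|\Phi_i|$ stays away from $\mathcal{T}(L)$ by definition of $U_i$), and $\Sigma$ has mass $L$. But on each modified piece, after the squeeze the mass on $\Omega'_{r_1}$ has dropped by a definite amount $\kappa_m$ while the complement is untouched, so $|\Psi_i|(t_i)$ cannot simultaneously be $\mathbf{F}$-close to a mass-$L$ varifold supported on a leaf of the Type I foliation — contradiction, via the same endpoint/mass-monotonicity bookkeeping used at the end of the proof of Deformation Theorem A. The main obstacle I anticipate is the interpolation/concatenation bookkeeping near the junctions $b_l$ between consecutive pieces: one must move $|\Phi_i|(b_l)$ out of $\mathcal{N}_{\eta/2}$ while keeping it inside the overlapping balls and not increasing mass, exactly as in the Type II argument above, and then verify that the Type I squeeze, which lands on the boundary leaf $S(-r_1)$ rather than back near $S$, still produces a path whose varifold image avoids a fixed $\mathbf{F}$-neighborhood of $\mathcal{T}(L)$ — here one leans on compactness of $\mathcal{T}(L)$, the uniform $r_1$ and $\kappa_m,t'_m$ from the finite cover, and the fact that any mass-$L$ limit on $\overline{\Omega'_{r_1}}$ forces the support back onto $S$, contradicting the definite mass loss.
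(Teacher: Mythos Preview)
Your overall strategy matches the paper's: exploit compactness of $\mathcal{T}(L)$ (Remark \ref{oulll}) to get a finite cover, uniform $r_1$, and uniform squeezing parameters $\kappa,\epsilon',t'$; decompose the bad intervals of $[0,1]$ into pieces lying in single balls of the cover; push the junction values $|\Phi_i|(b_l)$ out of the neighborhood of $\mathcal{T}(L)$ using the mass drop from Lemma~\ref{squeeze2}(5); then replace each resulting piece $p_l$ by a deformed path $q_l$ avoiding a fixed $\mathbf{F}$-ball around $\mathcal{T}(L)$.

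There is, however, one genuine technical point you gloss over. You write ``apply the squeezing deformation: concatenate the path $t\mapsto (P'_{r_1,t'_m})_\sharp(\text{original path})$-type homotopies.'' But the maps $P'_{r_1,t}$ are only defined on the tubular neighborhood $\tilde{\Omega}$ of $\spt(\Sigma_k)$, and Lemma~\ref{squeeze2}(5) only applies to varifolds $V\in\mathcal{V}_n(\Omega'_{r_1})$. A cycle $\Phi_i(b_l)$ that is merely $\mathbf{F}$-close to $\Sigma_k$ can have support spilling outside $\tilde{\Omega}$, so you cannot push it forward by $P'_{r_1,t}$ as written. The paper handles this by first invoking construction 5.9 of \cite{MaNeindexbound} (cf.\ \cite[Lemma 7.1]{Montezuma1}): there is an intermediate path $\{C_t\}_{t\in[0,1]}$, continuous in the mass topology, from $C_0=\Phi_i(b_l)$ to a cycle $C_1$ with $\spt(C_1)\subset\tilde{\Omega}$, staying inside $\mathbf{B}^\mathbf{F}_{\epsilon'}(\Sigma_{k(l)})\cap\mathbf{B}^\mathbf{F}_{\epsilon'}(\Sigma_{k(l-1)})$. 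Only then is the squeeze $(P'_{r_1,s})_\sharp$ applied to $C_1$. Your ``obstacle'' paragraph anticipates the junction bookkeeping but not this support-localization step; without it the argument does not parse.

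A second, related point: after the concatenation the paper does not merely squeeze each piece $p_l$, but applies the full three-step machinery (constructions 5.9, 5.11, 5.13 of \cite{MaNeindexbound}, with $P'_{r_1,t}$ in place of their $P_t$) to produce $q_l$; the avoidance estimate in item (iii) is then obtained from the analogues of Claims 1--2 in the proof of \cite[Deformation Theorem C]{MaNeindexbound} together with Lemma~\ref{squeeze2}(5). Your contradiction sketch for (iii) based on the mass drop $\kappa_m$ is morally right, but the rigorous version goes through these constructions rather than a bare mass argument.
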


\begin{proof} 
For any $\Sigma \in \mathcal{T}(L)$, one component $S_1$ of $\spt(\Sigma)$ is degenerate stable of Type I. Recall that $S_1$ has a neighborhood $\Omega'_{r_1}$ and associated squeezing maps $P'_{r_1,t}:\Omega'_{r_1}\to \Omega'_{r_1}$. Let $\tilde{\Omega}$ be a neighborhood of $\Sigma$ such that one of the component of $\tilde{\Omega}$ is $\Omega'_{r_1}$. By abuse of notations, we denote by $P'_{r_1,t}$ the map from $\tilde{\Omega}$ to itself, equal to $P'_{r_1,t}$ on $\Omega'_{r_1}$ and equal to the identity map on the other components. By Remark \ref{oulll} (1), 
for a given $\varepsilon>0$, the quantities $\kappa$, $\varepsilon'$ of Lemma \ref{squeeze2} can be chosen uniformly over $\Sigma \in \mathcal{T}(L)$. 

Let $\varepsilon>0$ be small enough so that for all $\Sigma\in \mathcal{T}(L)$ and all path $p:[0,1]\to\mathbf{B}^\mathbf{F}_{\varepsilon}(\Sigma)$, we can apply the three constructions 5.9, 5.11, 5.13 in \cite{MaNeindexbound}, where $p$ replaces the connected components of $V_{i,\varepsilon}$, $P'_{r_1,t}$ replaces their maps $P_t$. It exists by compactness of $\mathcal{T}(L)$. We also suppose $\varepsilon$ small enough so that for all $V$ in an $\varepsilon$-neighborhood of $\mathcal{T}(L)$, 
$$\|V\|(N)\geq (L+\liminf_{i\to \infty} \max_{j=0,1} \mathbf{M}(\Phi_i(j)))/2$$
(this is to make sure not to modify the endpoints $\Phi_i(0)$, $\Phi_i(1)$).

For this $\varepsilon$, let $\varepsilon' < \varepsilon$, $t'$, $\kappa$ be given by Lemma \ref{squeeze2} ($\varepsilon'$ and $\kappa$ can be chosen uniformly in $V_0\in \mathcal{T}(L)$ by compactness, see Remark \ref{oulll} (1)).  Choose also an $\varepsilon''<\varepsilon'$ so that for all $\Sigma,\Sigma'\in \mathcal{T}(L)$, $V\in \mathbf{B}^\mathbf{F}_{\varepsilon''}(\Sigma)\cap\mathbf{B}^\mathbf{F}_{\varepsilon''}(\Sigma') $, using the techniques in the first construction 5.9 in \cite{MaNeindexbound} (cf \cite[Lemma 7.1]{Montezuma1}), there is a path $\{C_t\}_{t\in [0,1]}$ continuous in the mass topology from $C_0=V$ to a cycle $C_1$ with 
\begin{itemize}
\item $\spt(C_1)\subset \tilde{\Omega} \cap \tilde{\Omega}'$ where $ \tilde{\Omega}$, $\tilde{\Omega}'$ depend respectively on $\Sigma$, $\Sigma'$ and are defined as in the first paragraph of the proof,
\item for all $t\in [0,1]$, $|C_t| \in \mathbf{B}^\mathbf{F}_{\varepsilon'}(\Sigma)\cap\mathbf{B}^\mathbf{F}_{\varepsilon'}(\Sigma') $. 
\end{itemize}

We can cover $\mathcal{T}(L)$ with a finite number of balls $\mathbf{B}^\mathbf{F}_{\varepsilon''}(\Sigma_k)$, where $\Sigma_1,...,\Sigma_K\in \mathcal{T}(L)$. If $u>0$, let $\mathcal{N}_u$ be the $u$-neighborhood of $\mathcal{T}(L)$ in the $\mathbf{F}$ topology. Let $\mu>0$ be small enough so that 
\begin{equation} \label{truc}
\mathcal{N}_{2\mu}\subset \bigcup_{k=1}^K\mathbf{B}^\mathbf{F}_{\varepsilon''}(\Sigma_k) \text{ and}
\end{equation}
\begin{equation} \label{trac}
\forall V\in \mathcal{N}_{\mu}, \quad \|V\|(N)> L-\kappa.
\end{equation}
Consider $\Phi_i:[0,1]\to \mathcal{Z}_n(N;\mathbf{F},\mathbb{Z}_2)$ and let $U_{i,\mu} = [a_1,a_2]\cup...\cup [a_{2p-1},a_{2p}]$ be a union of closed disjoint intervals in $[0,1]$ so that 
$$\text{ for all } x\in U_{i,\mu}, |\Phi_i(x)| \in \mathcal{N}_{2\mu}$$
$$\text{ and for all } x\in [0,1]\backslash \bigcup_{l=1}^p(a_{2l-1},a_{2l}), |\Phi_i(x)|\in \mathcal{V}_n(N)\backslash \mathcal{N}_\mu.$$
For each $i$ large, we want to modify $\Phi_i$ on each interval $[a_{2l-1},a_{2l}]$ into a map $\Psi_i$ which coincide with $\Phi_i$ outside of $U_{i,\mu}$. Let us first focus on $[a_1,a_2]$, the other intervals will be treated in the same way. By (\ref{truc}), we can write $[a_1,a_2]$ as a union $[b_1,b_2]\cup[b_2,b_3]\cup...\cup[b_{m-1},b_m]$ ($a_1=b_1$ and $a_2=b_m$) such that for each $l=1,...,m_1$, there exists a $k=k(l)\in \{1,...,K\}$ with $|\Phi|([b_l,b_{l+1}])\subset \mathbf{B}^\mathbf{F}_{\varepsilon''}(\Sigma_k)$. Since $\Sigma_{k(l)}\in \mathcal{T}(L)$, 
recall from the beginning of the proof, that there is a neighborhood $\tilde{\Omega}$ of $\spt(\Sigma_{k(l)})$ and there are associated squeezing maps $P'_{r_1,t}:\tilde{\Omega}\to \tilde{\Omega}$. 

By construction, $|\Phi_i|(a_1)$ and $|\Phi_i|(a_2)$ are outside of $\mathcal{N}_\mu$. We want to move each of the other $|\Phi_i|(b_l)$ outside of $\mathcal{N}_\mu$ as well, with a mass control on the deformation. Fix $l\in\{2,...,m-1\}$ and consider the neighborhood $\tilde{\Omega}$ of $\spt(\Sigma_{k(l)})$, and associated squeezing maps $P'_{r_1,t}:\tilde{\Omega}\to \tilde{\Omega}$. By the choice of $\varepsilon''<\varepsilon'$, and the first construction 5.9 in \cite{MaNeindexbound} (cf \cite[Lemma 7.1]{Montezuma1}), there is a path $\{C_t\}_{t\in [0,1]}$ continuous in the mass topology from $C_0=\Phi_i(b_l)$ to a cycle $C_1$ with 
\begin{itemize}
\item $\spt(C_1)\subset \tilde{\Omega}$,
\item for all $t\in [0,1]$, $|C_t| \in \mathbf{B}^\mathbf{F}_{\varepsilon'}(\Sigma_{k(l)})\cap \mathbf{B}^\mathbf{F}_{\varepsilon'}(\Sigma_{k(l-1)})$. 
\end{itemize}
Thanks to the second item above and Lemma \ref{squeeze2}, 
$$\forall s\in[0,t'], \quad(P'_{r_1,s})_\sharp |C_1| \in \mathbf{B}^\mathbf{F}_{\varepsilon}(\Sigma_{k(l)})\cap \mathbf{B}^\mathbf{F}_{\varepsilon}(\Sigma_{k(l-1)}),$$
$$\|(P'_{r_1,t'})_\sharp |C_1\|\\|(N) \leq L-\kappa$$
(here $P'_{r_1,s}$ is associated to $\Sigma_{k(l)}$). In particular by (\ref{trac}) 
$$(P'_{r_1,t'})_\sharp |C_1|\notin \mathcal{N}_\mu.$$
Denote by $A_l:[0,1]\to \mathcal{Z}_n(N;\mathbf{F};\mathbb{Z}_2)$ the (reparametrized) concatenation of $\{C_t\}_{t\in[0,1]}$ and $\{(P'_{r_1,t})_\sharp C_1\}_{t\in[0,1]}$. Let $A^-_l$ denote the same path with reverse parametrization. Up to reparametrization, $\Phi_i\big|_{[a_1,a_2]}$ is homotopic in the $\mathbf{F}$-topology to the following concatenation:
$$
(\Phi_i\big|_{[b_1,b_2]} + A_2) + (A^-_2 +\Phi_i\big|_{[b_2,b_3]}+A_3) +...+(A^-_{m-1}\Phi_i\big|_{[b_{m-1},b_m]}).
$$
Each subsum in parentheses is a path $p_l:[0,1]\to \mathcal{Z}_n(N;\mathbf{F};\mathbb{Z}_2)$ ($l=1,...,m_1$) so that the (varifold) image of $|p_l|$ is included in a ball of the form $\mathbf{B}^\mathbf{F}_\varepsilon(\Sigma_k)$, whose endpoints satisfy $|p_l(0)|,|p_l(1)|\notin \mathcal{N}_\mu$. We can apply the first, second and third constructions 5.9, 5.11, 5.13 in \cite{MaNeindexbound} (with $p_l$ replacing their $V_{i,\varepsilon}$ and our squeezing maps $P'_{r_1,t}$ replacing their maps $P_t$) to $p_l$ and get a path $q_l$ which is $\mathbf{F}$-continuous and homotopic to $p_l$ in the flat topology, with the following properties: 
\begin{itemize}
\item the endpoints are the same $p_l(j)=q_l(j)$ ($j=0,1$) and are not in  $\mathcal{N}_\mu$,
\item 
\begin{align*}
\max_{t\in[0,1]}\mathbf{M}(q_l(t)) & \leq \max\{\mathbf{M}(p_l(0)),\mathbf{M}(p_l(1))\}+1/i\\
& \leq \max_{t\in [0,1]}\mathbf{M}(\Phi_i(t))+1/i,
\end{align*}
\item there exists $\hat{{\varepsilon}}=\hat{{\varepsilon}}(\mu)>0$ such that if $i$ is large enough,
$$\inf\{\mathbf{F}(\Sigma, |q_l|([0,1])) ; \Sigma\in \mathcal{T}(L)\} \geq \hat{\varepsilon}.$$
\end{itemize}
The last item follows from arguments very similar to Claims 1 and 2 in the proof of \cite[Deformation Theorem C]{MaNeindexbound}, and item (5) of Lemma \ref{squeeze2}. Finally we concatenate $q_1$,...,$q_{m-1}$ to get $\Psi_i\big|_{[a_1,a_2]}$ and we proceed similarly for the other intervals $[a_2,a_3]$,...,$[a_{2p-1},a_{2p}]$ to get $\Psi_i$.

\end{proof}

\subsection{Existence of saddle point minimal hypersurfaces}
Equipped with the previous deformation theorems, we prove the existence of saddle points in non-bumpy metrics. Remember that if a degenerate stable minimal hypersurface satisfies Condition [M] then it is of type I, II, or III (see Appendix A). The following theorem is an extension of Proposition \ref{nonbumpy}.

\begin{theo}\label{sadddle}
Let $(N^{n+1},g)$ be a compact manifold with minimal boundary, with $2\leq n\leq 6$, such that $\partial N$ is locally area minimizing inside $N$. Then there is a saddle point minimal hypersurface $\Gamma$ inside the interior $\interior(N)$, whose index is at most one and whose $n$-volume is bounded by $W+1$ where $W$ is the width of $(N,g)$. 
\end{theo}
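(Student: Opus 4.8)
The plan is to run $1$-parameter Almgren--Pitts min-max on $(N,g)$ exactly as in Proposition~\ref{nonbumpy}, and to upgrade the output from ``index $\le 1$'' to ``saddle point'' by replaying the index-bound argument of Marques--Neves \cite{MaNeindexbound} with the bumpiness hypothesis replaced by the Deformation Theorems A, C and D proved in the previous subsection. First I would normalize the boundary: since by \cite{MorganRos} the width $W$ of $(N,g)$ strictly exceeds the $n$-volume of every component of $\partial N$, Lemma~\ref{degenerate} and the discussion following it in Appendix~A allow me to assume (after, if necessary, a modification of $g$ in a thin collar of $\partial N$ that does not affect the conclusion since the hypersurface we produce will be interior) that each component of $\partial N$ satisfies Condition~[M] and is either strictly stable or degenerate stable of Type~II; this is also what guarantees, via the monotonicity formula together with the strictly mean convex foliated collars, that the min-max varifold lives in $\interior(N)$ and is not swallowed by $\partial N$.

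Next I would set up a pulled-tight minimizing sequence $\{\Phi_i\}$ of $1$-sweepouts with $\mathbf{L}(\{\Phi_i\})=W$ and $\liminf_i\big(W-\max_{j=0,1}\mathbf{M}(\Phi_i(j))\big)>0$, so that every $V\in\mathbf{\Lambda}(\{\Phi_i\})$ with $\|V\|(N)=W$ is a stationary integral varifold; by Almgren--Pitts--Schoen--Simon regularity and Sharp's compactness \cite{Sharp}, the support of any such $V$ is a smooth closed embedded minimal hypersurface with area bounded by $W$, and (after taking connected components with their multiplicities) each such component is either $2$-unstable, or strictly stable, or degenerate stable. For a \emph{stable} minimal hypersurface carrying a Jacobi field the field is the sign-definite bottom eigenfunction, so Condition~[M] holds automatically, and the component is of Type~I, II or III. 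The key dichotomy is now: a component that is $2$-sided and unstable of index one, or $1$-sided with unstable connected double cover, is a saddle point of index one and $n$-volume $\le W$; and a degenerate stable component of Type~III is, via the foliation of Lemma~\ref{degenerate}, a saddle point of index zero and $n$-volume $\le W$. So if no component of any critical $V$ at level $W$ is a saddle point of index $\le 1$, then every such $V$ is $2$-unstable or has all components strictly stable or degenerate stable of Type~I or~II; in particular Condition~$[\star]_W$ holds.

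In that case I would successively deform $\{\Phi_i\}$, re-pulling-tight after each step, to push its image off the entire critical set at level $W$: Deformation Theorem~A removes the family $\mathcal{S}(W)$ of $2$-unstable varifolds; Deformation Theorem~D (applicable since $[\star]_W$ holds) removes the family $\mathcal{T}(W)$ of degenerate stable Type~I varifolds; and, after covering by finitely many $\mathbf{F}$-balls the compact remainder of $\mathbf{\Lambda}(\{\Phi_i\})\cap\{\|V\|(N)=W\}$ (which by the above consists of strictly stable and degenerate stable Type~II varifolds only), Deformation Theorem~C removes those as well. The resulting sequence $\{\Psi_i\}$ is homotopic to $\{\Phi_i\}$ with $\mathbf{L}(\{\Psi_i\})\le W$ and $\mathbf{\Lambda}(\{\Psi_i\})\cap\{\|V\|(N)=W\}=\varnothing$; since a subsequential varifold limit realizing $\mathbf{L}(\{\Psi_i\})$ would be a stationary mass-$\mathbf{L}$ varifold, this forces $\mathbf{L}(\{\Psi_i\})<W$, contradicting the definition of $W$ as the infimum of $\mathbf{L}$ over the homotopy class. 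Hence some critical $V$ at level $W$ \emph{does} carry a saddle-point component $\Gamma\subset\interior(N)$; it has index at most one (the $2$-unstable possibility having been cleared out by Deformation Theorem~A), and $n$-volume at most $W\le W+1$.

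The step I expect to be the main obstacle is the correct chaining of the three deformation theorems: one must check that pulled-tightness, the endpoint mass gap, and the structural classification of the mass-$W$ critical varifolds all persist as the sequence is modified, that no new critical varifolds of mass $W$ are created along the way, and — for Deformation Theorem~C, whose statement takes a \emph{finite} collection $\{\Sigma^{(1)},\dots,\Sigma^{(Q)}\}$ — that the a priori possibly non-discrete family of strictly stable and degenerate stable Type~II critical varifolds can genuinely be reduced to a finite $\mathbf{F}$-net (via Sharp's compactness, with an iteration if the radius furnished by the theorem is smaller than the net radius). A secondary but real point is making the boundary normalization genuinely harmless for both the value of the width and the interior location of $\Gamma$.
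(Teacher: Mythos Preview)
Your overall plan coincides with the paper's: normalize $\partial N$, take a pulled-tight min-max sequence at level $W$, and use the three deformation theorems to rule out the ``non--saddle'' configurations of the critical varifold. Two points, however, are not right as written.

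First, Condition~[M] is \emph{not} automatic. The fact that a stable degenerate hypersurface has one-dimensional Jacobi kernel spanned by a positive function is true, but Condition~[M] is a statement about the strict monotonicity of the area function $A(t)$ of the canonical foliation on each side of $t=0$, which does not follow from the kernel structure. The paper begins by \emph{assuming} that every minimal hypersurface of $n$-volume at most $W$ and index at most one satisfies Condition~[M]; if this fails for some $\Gamma$, Appendix~A produces infinitely many saddle points $\Gamma_{t_k}\to\Gamma$ with $\Vol_n(\Gamma_{t_k})\to\Vol_n(\Gamma)\le W$ and, by Lemma~\ref{degenerate}(4), index at most one --- hence a saddle point of $n$-volume $\le W+1$, which is exactly where the ``$+1$'' in the statement is used. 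Your derivation of $[\star]_W$ from ``no saddle point of index $\le 1$'' becomes correct once this reduction is made, but not before.

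Second, the \emph{order} in which you chain the deformation theorems matters, and the paper's is the opposite of yours. The paper first applies Deformation Theorem~D (pushing the sweepouts off $\mathcal{T}(W)$), then checks that $\mathcal{T}(W)$ and $\mathcal{S}(W)$ are at positive $\mathbf{F}$-distance, and then applies Deformation Theorem~A with $K=\bar{\mathbf{B}}^{\mathbf{F}}_{\hat\epsilon}(\mathcal{T}(W))$; the $K$-parameter of Theorem~A is precisely what guarantees that A does not undo D's work. In your order (A then D), Theorem~D has no analogous avoidance parameter, so its squeezing maps could a priori push the sweepout back into $\mathcal{S}(W)$; ``re-pulling-tight'' does not help, since it only forces mass-$W$ limits to be stationary and says nothing about avoiding specific neighborhoods. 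The obstacle you flag at the end is real, and the paper resolves it by the D--then--A(with $K$)--then--C order.

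Finally, your worry about Theorem~C taking only a finite collection is unnecessary: the paper observes (by the same mechanism as Remark~\ref{oulll}) that the set of mass-$W$ stationary integral varifolds whose support has every component strictly stable or degenerate stable of Type~II is genuinely \emph{finite}, not merely coverable by a finite net. The squeezing maps of \cite[Proposition~5.7]{MaNeindexbound} and Lemma~\ref{squeeze} force any smoothly converging sequence of such supports (at fixed total mass $W$) to stabilize, which gives finiteness directly and makes the iteration you propose superfluous.
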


\begin{proof}
We can assume that any minimal hypersurface embedded in $N$ of $n$-volume at most $W$ and index at most one satisfies Condition [M] (see Appendix A).

Let $W$ be the width of $N$ and let $\{\Phi_i\}_{i\in \mathbb{N}}$ be a pulled-tight sequence of sweepouts so that 
$$\mathbf{L}(\{\Phi_i\}_{i\in \mathbb{N}})=W.$$
Recall $W$ is larger than the $n$-volume of any component of $\partial N$. By Condition [M] each component of $\partial N$  is either strictly stable or is degenerate stable of Type II (by convention a minimal boundary component cannot be of Type I, see Appendix A).
We can also suppose that $[\star]_W$ is satisfied (see before Remark \ref{oulll}) otherwise there is already a saddle point $\Gamma$ of index at most one and $n$-volume bounded by $W+1$. We first apply Deformation Theorem D. Then 
note that $\mathcal{T}(W)$ is at $\mathbf{F}$-positive distance from $\mathcal{S}(W)$: this can be checked with the compactness of $\mathcal{T}(W)\subset \mathcal{V}_n(N)$ (Remark \ref{oulll} (1)) and Lemma \ref{squeeze}, Lemma \ref{squeeze2}. Hence we can apply Deformation Theorem A with $K=\bar{\mathbf{B}}^\mathbf{F}_{\hat{\varepsilon}}(\mathcal{T}(W))$ (where $\hat{\varepsilon}$ might be chosen smaller than the one given by Deformation Theorem D).

We pull-tight the sequence of sweepouts obtained; by the properties of the pull-tight map (see \cite[Subsection 3.6]{MaNeindexbound}), this new pulled-tight sequence of sweepouts still satisfies the conclusions of Deformation Theorems A and D for some $\hat{\varepsilon}$ small enough. By Remark \ref{oulll} (2), the set of varifolds $\{\Sigma^{(i)}\}$ satisfying the assumptions of Deformation Theorem C is finite, so we can apply Deformation Theorem C to all these varifolds. Let $\{\Psi_i\}$ be the resulting sequence of sweepouts. By Almgren-Pitts' theory (see \cite[Theorem 3.8]{MaNeindexbound} and use that $\liminf_{i\to \infty} (W-\max_{j=0,1} \mathbf{M}(\Psi_i(j)))>0$, see also proof of Theorem \ref{discreteminmax} in Appendix B), an element $V$ of $\mathbf{\Lambda}(\{\Psi_i\})$ has smooth support and mass $W$. No component of $\spt(V)$ is $2$-unstable or degenerate stable of Type I, and the components of $\spt(V)$ (their double covers if not 2-sided) cannot be all strictly stable or degenerate stable of Type II. Hence, since we are assuming $[\star]_W$, at least one of its components satisfies the following:
\begin{itemize}
\item either it has Morse index one,
 \item or it is stable, 1-sided and its double cover is unstable.
 \end{itemize}
In both cases, this minimal hypersurface is a saddle point and necessarily contained in the interior of $N$.
\end{proof}

\section{Zero-infinity dichotomy for manifolds thick at infinity}\label{sectiondichotomy}

\subsection{Local version of Gromov's result for manifolds thick at infinity} \label{womm}

Consider a complete manifold $(M^{n+1},g)$. Recall that saddle point minimal hypersurfaces are closed embedded minimal hypersurfaces satisfying a natural saddle point condition. By ``compact domain'', we mean a compact $(n+1)$-dimensional submanifold of $M$ with smooth boundary.

Let $B$ be a compact domain of $M$ and let $\Sigma \subset (M,g)$ be a closed embedded minimal hypersurface which may be empty. Suppose that $\Sigma$ is locally area-minimizing (it minimizes the $n$-volume among all smooth hypersurfaces isotopic to $\Sigma$ contained in a small tubular neighborhood of $\Sigma$). We say that $B\backslash \Sigma$ has a \emph{singular weakly mean convex foliation} if there is a compact Riemannian manifold $(B_0,g')$ with $C^{1,1}$ boundary containing isometrically a neighborhood of $B \cup \Sigma$ in $M$ endowed with $g$,
so that there is a family of closed subsets of $B_0$, $\{K_t\}_{t\in[0,1)}$, satisfying:
\begin{itemize}
\item $K_0=B_0$, 
$$K_{t'}\subset K_{t} \text{ if  $t'>t$ and}$$
$$\interior(B) \cap \bigcap_{t\in[0,1)} K_t = \interior(B)\cap \Sigma,$$
\item each $K_t$ is an integral $n+1$-current and $\{\partial K_t\}$ yields a family of cycles in $\mathcal{Z}_n(B_0;\mathbb{Z}_2)$ continuous in the flat topology (see Appendix B-C), 
\item $ \partial K_t$ is weakly mean convex in the sense of mean curvature flow, and the non strictly mean convex level sets $\partial K_t$ are smoothly embedded closed minimal hypersurfaces in $(B_0,g')$ (see Appendix C),
\item for any $t\in(0,1)$, if $\partial K_t$ is smooth then $\{\partial K_t\}_{t\in[t-\varepsilon,t+\varepsilon]}$ is a smooth foliation for some small $\varepsilon>0$, and if $\partial K_t$ is not smooth, then $\{K_t\}_{t\in[t-\varepsilon,t+\varepsilon]}$ is a level set flow for some small $\varepsilon$ (see Appendix C).
\end{itemize}

The following theorem is a more precise version of Theorem \ref{a} for manifolds thick at infinity.
\begin{theo}\label{b}
Let $(M,g)$ be an $(n+1)$-dimensional complete manifold with $2\leq n\leq 6$, thick at infinity, and let $B\subset M$ be a compact domain. Then 
\begin{enumerate}
\item either $M$ contains a saddle point minimal hypersurface intersecting $B$,
\item or there is an embedded closed locally area minimizing hypersurface $\Sigma_B\subset (M,g)$ (maybe empty) such that $B\backslash \Sigma_B$ has a singular weakly mean convex foliation.
\end{enumerate}
\end{theo}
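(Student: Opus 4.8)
The plan is to follow the strategy of the proof of Theorem \ref{a}, but to upgrade the min-max input from Proposition \ref{nonbumpy} (which produces a minimal hypersurface of index at most one) to Theorem \ref{sadddle} (which produces a \emph{saddle point} minimal hypersurface), and to exploit the thickness-at-infinity hypothesis to control what happens as the exhausting radius $r\to\infty$. So first I would reproduce the opening of the proof of Theorem \ref{a}: fix $p\in M$, enlarge $B$ to a compact domain $D\supset B_r(p)\supset \overline{B}$, modify the metric near $\partial D$ to make it mean convex, pass to a weakly mean convex subdomain $D'$, remove a closed minimal hypersurface $S\subset \interior(D')\setminus B$, and take the metric completion $\mathbf{D}$. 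Then solve the same constrained Plateau problem to obtain a domain $B''$ with $\partial B''$ either containing a strictly mean convex component $A$ (touching $\partial B$) or being a smooth minimal hypersurface; in the first case run level set mean curvature flow from $A$, which either sweeps out $B$ (giving the foliation) or converges to a stable closed minimal hypersurface intersecting $B$ — and by thickness at infinity, when this happens for all $r$, the limit as $r\to\infty$ is closed, hence a stable (not saddle) closed minimal hypersurface. This last case forces us into alternative (2) with $\Sigma_B$ a component of that limit; I would need to check that $B\setminus\Sigma_B$ then carries the weakly mean convex foliation, using that the mean curvature flow level sets shrink toward $\Sigma_B$ and that the non-strictly-mean-convex leaves are exactly minimal hypersurfaces.

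Next, in the ``smooth minimal boundary'' case, I would imitate the second half of the proof of Theorem \ref{a}: choose $B''$ of minimal volume among all such, remove a maximal collection of disjoint one-sided and two-sided non-separating minimal hypersurfaces from $\interior(B''\setminus B)$ to get a compact manifold $\mathbf{B}$ with $\partial\mathbf{B}$ locally area minimizing inside $\mathbf{B}$ and $B\hookrightarrow\mathbf{B}$ isometrically, with the width of $\mathbf{B}$ bounded in terms of $B$ alone (via the volume-minimality of $\mathbf{B}$ and interpolation \cite[Proposition A.2]{MaNeindexbound}). Now apply Theorem \ref{sadddle} instead of Proposition \ref{nonbumpy}: this yields a saddle point minimal hypersurface $\Gamma\subset\interior(\mathbf{B})$ of index at most one with $\Vol_n(\Gamma)\le W+1$ bounded uniformly in $r$. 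The argument that $\Gamma$ must intersect $B$ is verbatim as in Theorem \ref{a}: if not, cutting along $\Gamma$ and re-running the minimization contradicts either the minimality of $\Vol(\mathbf{B})$ or the maximality of the number of boundary components (whether $\Gamma$ is two-sided separating, two-sided non-separating, or one-sided). Then let $r\to\infty$: the $\Gamma=\Gamma_r$ have uniformly bounded area and index, so by \cite{Sharp} they subconverge to a complete embedded minimal hypersurface of finite volume meeting $B$, which by thickness at infinity is \emph{closed}; I must then argue that the limit — or one of its components — is itself a saddle point, not merely a limit of saddle points.

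The main obstacle I anticipate is precisely this last point: \textbf{the saddle point property is not obviously closed under varifold limits}. A limit of index-$\le 1$ saddle points could a priori be stable with multiplicity, or could be a stable hypersurface that arises as a ``jump'' limit. To handle this I would argue as follows. If for every sufficiently large $r$ the hypersurface $\Gamma_r$ already intersects $B$ and is a saddle point, and these all coincide (up to the finitely-many-ends bookkeeping) with a \emph{fixed} closed minimal hypersurface for $r$ large — because the ambient geometry near $B$ has stabilized — then that fixed hypersurface is the desired saddle point intersecting $B$ and we are in case (1). If instead $\Gamma_r$ genuinely varies, one extracts a closed limit $\Gamma_\infty$ by thickness at infinity and by the smooth compactness of \cite{Sharp} (index $\le 1$); then either $\Gamma_\infty$ (with multiplicity one on a saddle component) is still a saddle point, placing us in (1), or $\Gamma_\infty$ is stable, in which case the min-max value $W+1$ is attained on a stable configuration and one must re-examine the Deformation Theorems A, C, D of Section \ref{saddl} to see that this forces the presence of a nearby genuine saddle point at the same width — exactly as in the proof of Theorem \ref{sadddle}, where the deformation theorems push the sweepouts off the stable and $2$-unstable varifolds, so that the limiting varifold in $\mathbf{\Lambda}(\{\Psi_i\})$ must have a saddle component. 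Packaging this: apply Theorem \ref{sadddle} not to each $\mathbf{B}=\mathbf{B}_r$ separately but observe that for $r$ large the relevant region is isometric to a fixed compact piece, so a single application produces a saddle point $\Gamma_\infty$ meeting $B$; if the region never stabilizes, the thickness hypothesis still forces a closed limit on which the Deformation Theorem machinery locates a saddle component. The remaining routine verifications — the first-variation computation showing the foliation leaves shrink, the regularity of the level set flow foliation from \cite{WhiteregMCF}, and the fact that the non-strictly-convex leaves are minimal — are exactly as in Theorem \ref{a} and Corollary \ref{explanation}.
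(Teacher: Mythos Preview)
Your overall strategy—upgrade Proposition~\ref{nonbumpy} to Theorem~\ref{sadddle} inside the proof of Theorem~\ref{a}, then use thickness at infinity to control the $r\to\infty$ limit—is the right shape and matches the paper for case (1). However, there are two substantive gaps.

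\textbf{The foliation case is much harder than you indicate.} When the level set flow from the strictly mean convex component $A$ converges to a stable closed minimal hypersurface $S$ intersecting $B$, you cannot simply declare $\Sigma_B$ to be that $S$ and say ``I would need to check that $B\setminus\Sigma_B$ then carries the weakly mean convex foliation.'' First, $S$ need not be locally area minimizing: it could be degenerate stable of Type I (Appendix A), in which case the foliation must be extended \emph{past} $S$ by hand before continuing the flow. Second, even when $S$ is locally area minimizing, the flow has only foliated the region between $A$ and $S$; the rest of $B$ beyond $S$ is untouched. The paper's proof handles this by first iteratively removing all saddle point minimal hypersurfaces from $B''\setminus B$ (producing a limit $B''_\infty$ with no saddle points outside $B$), then running level set flow from $\partial B''_\infty$, extending it across every Type I hypersurface it meets, and repeating until either the foliation exhausts $B$ up to a locally area-minimizing $\Sigma_B$ (case (2)) or a nontrivial core $B_{core}^{(1)}$ with strictly stable or Type II boundary remains. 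Only this core receives Theorem~\ref{sadddle}. The Type I/II/III trichotomy and Condition [M] from Appendix A are essential to this construction and are entirely absent from your outline; without them you cannot produce the weakly mean convex foliation of alternative (2).

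\textbf{The limit argument is simpler than you fear.} You correctly flag that the saddle point property is not closed under varifold limits, but your proposed fixes (re-running the deformation theorems on the limit, or hoping the $\Gamma_r$ stabilize) are unnecessary. By Sharp's compactness and thickness at infinity, the $\Gamma_r$ subconverge to a \emph{closed} hypersurface, so for $r$ large they all lie in a fixed compact set $K$. Since $g_{D_r}=g$ on $B_r(p)\supset K$ once $r$ is large, each such $\Gamma_r$ is already minimal for $g$ and—because the saddle condition is purely local—already a saddle point for $g$. One takes $\Gamma=\Gamma_r$ for a single large $r$; no passage to the limit is required. The paper further uses the preliminary removal of saddle points from $B''_\infty\setminus B$ to make the step ``$\Gamma$ intersects $B$'' immediate: any saddle point in $B''_\infty$ must meet $B$ by construction, bypassing the cut-and-reminimize contradiction you import from Theorem~\ref{a}.
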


\begin{exemple} Before starting the proof of Theorem \ref{b}, we give a short example suggested by a referee to illustrate the statement. Consider a metric on $S^2\times \mathbb{R}$ of the form $g:= f^2(t) g_{S^2}  \oplus dt^2$ where  $g_{S^2}$ is the unit round metric on $S^2$ and $f:\mathbb{R}\to (0,1)$ is  a smooth function with $f'(0)=0$. 

If $f$ is non-increasing on $(-\infty,0)$ and $B:=S^2\times[-1,0]$ then Item (2) of Theorem \ref{b} holds with $\Sigma_B=\varnothing$ and the weakly mean convex foliation is given by $K_t:=[t-1,0]$ with $0\leq t<1$.

If $f$ is moreover non-decreasing on $(0,\infty)$, and $B:=[-1,1]$ then Item (2) of Theorem \ref{b} holds with $\Sigma_B=S^2\times \{0\}$ and the weakly mean convex foliation is given by $K_t:=[t-1,1-t]$ with $0\leq t<1$.

If $f$ is a constant then both Items (1) and (2) hold (in particular, the possibilities of Theorem \ref{b} are not mutually exclusive).

\end{exemple}

\begin{proof}[Proof of Theorem \ref{b}] 
We suppose that $M$ is thick at infinity. Let us resume from the proof of Theorem \ref{a}. Recall that $B$ is contained in a ball $B_r(p)$, itself contained in $D$. We modify the metric $g$ near $\partial D$ to get $g_D$ so that $\partial D$ becomes mean convex for $g_D$. For any subdomain $D'\subset (D,g_D)$ containing $B$ and with $C^{1,1}$ weakly mean convex boundary, and any minimal hypersurface $S\subset \interior(D')\backslash B$, we consider the metric completion $\mathbf{D}$ of $D'\backslash S$, naturally endowed with a metric still denoted by $g_D$.

We found a solution to a constrained minimization problem, yielding a compact manifold $B''$ depending on $\mathbf{D}$, such that $B\subset B''$. If there is a saddle point minimal hypersurface $\Gamma_1$ included in $B''\backslash B$ then there is a thin mean concave neighborhood $N_{\Gamma_1}$ of $\Gamma_1$ embedded in $B''\backslash B$ and the metric completion of $B''\backslash N_{\Gamma_1}$ gives a new manifold $\mathbf{D}_2$ where we can solve the constrained minimization problem, get a manifold $B''_2$. If there is a saddle point minimal hypersurface $\Gamma_2$ embedded in $B''_2\backslash B$, we repeat the process and get $B''_3$. If for a $j_0$, $B''_{j_0}\backslash B$ does not contain a saddle point minimal hypersurface, then we define $B''_\infty :=B''_{j_0}$. Suppose that the sequence of $B''_j$ is infinite, the $n$-volume of $B''_j$ is strictly decreasing and we can suppose that we chose the saddle point minimal hypersurfaces in such a way that $\lim_{j\to \infty} \Vol_n(B''_j)$ is as small as possible, among all choices of sequence of saddle points $\Gamma,\Gamma_2, \Gamma_3,...$. Then $B''_j$ subsequently converges in the Gromov-Hausdorff topology to a closed mean convex set $B''_\infty$ enodwed with the metric $g_D$, in which $(B,g)$ is isometrically embedded, and by minimality of its volume, there is no saddle point minimal hypersurface embedded in $B''_\infty \backslash B$. Since the $C^{1,1}$ boundary of each $B''_j$ has a minimizing property and its $n$-volume is at most $\Vol_n(\partial B)$, by compactness \cite{Sharp}, $B''_\infty$ has a $C^{1,1}$ mean convex boundary (in the sense of level set flow), smooth and minimal outside of $B$. Let $\Phi:[0,1] \to \mathcal{Z}_n(B; \mathbb{Z}_2)$ a sweepout of $B$ with $\Phi(0)=0$, and $\Phi(1)$ is $\partial B$ with multiplicity one.  \newline

\begin{claim}

 \begin{itemize}
\item Either there is a locally area minimizing minimal hypersurface $\Sigma_{B,D} \subset (B''_\infty,g_D)$ of $n$-volume at most $\Vol_n(\partial B)$ such that $B\backslash\Sigma_{B,D}$ has a singular weakly mean convex foliation,
\item or there is a saddle point minimal hypersurface $\Gamma_D$ of index at most one and $n$-volume at most $\Vol_n(\partial B) + \max_{t\in[0,1]} \mathbf{M}(\Phi(t))+2$, embedded in $B''_\infty$ intersecting $B$. \newline
\end{itemize}
\end{claim}

Let us prove this statement. Note that in this claim, the metric on $B''_\infty$ is the one induced by $g_D$ and that $\Vol_n(\partial B) + \max_{t\in[0,1]} \mathbf{M}(\Phi(t))+2$ is independent of $D$. Recall that $\Vol_n(\partial B''_\infty)\leq \Vol_n(\partial B)$. If a component $T'$ of $\partial B''_\infty$ is smooth and does not touch $B$, it is a stable minimal hypersurface. It has to satisfy Condition [M] inside $B''_\infty$ because otherwise there would be saddle point minimal hypersurfaces arbitrarily close to $T'$ and outside of $B$, so we could repeat the minimization process and contradict the volume minimality of $B''_\infty$. Since $T'$ cannot be degenerate stable of Type III for the same reason, $T'$ is actually either strictly stable or degenerate stable of Type II. If a component $T$ of $\partial B''_\infty$ is smooth and touches $B$, if $T$ is a saddle point minimal hypersurface, the claim is true. If $T$ is not a saddle point minimal hypersurface and does not satisfy Condition [M] inside $B''_\infty$, then there are saddle point minimal hypersurfaces close to $T$ in the $C^\infty$ topology with index at most one (see Lemma \ref{degenerate} (4)), that intersect $B$ and the claim is still true in that case. The remaining possibilities are that $T$ is either strictly stable or degenerate stable of Type II or III as a minimal hypersurface inside $B''_\infty$ (by convention a minimal boundary component is not of Type I). In what follows, we will suppose that all smooth components of $\partial B''_\infty$ are either strictly stable or degenerate stable of Type II or III.


Some boundary components of $B''_\infty$ may be degenerate stable of Type III inside $B''_\infty$. We can first push these components by hand slightly inside of $B''_\infty$ and preserving their weak mean convexity, then we can run the level set flow (Appendix C) to the corresponding subset of $B''_\infty$. We obtain a family of closed sets $\{A_t\}_{t\geq 0}$ with $A_0=B''_\infty$. The level set flow will eventually get ``stuck''  at a stable minimal hypersurface $S$ (which can be empty),
i.e. the level sets converge. There are a few possibilities. 

Either $\bigcap_{t\geq 0}{A}_t\cap\interior(B) = \varnothing$ and the claim is verified by taking $\Sigma_{B,D}=\varnothing$. The set $B$ has a singular weakly mean convex foliation. 

Or $\bigcap_{t\geq 0}{A}_t\cap\interior(B) \neq \varnothing$ and the stable minimal hypersurface $S$ intersects the interior of $B$. If we suppose that $S$ does not satisfy Condition [M], some saddle point minimal hypersurfaces close to $S$ intersect $B$ and they have the right index and $n$-volume bounds, so the claim is true in this case.


In the case $\bigcap_{t\geq 0}{A}_t\cap\interior(B) \neq \varnothing$ and $S$ satisfies Condition [M], each component of $S$ intersecting the interior of $B$ is either strictly stable or degenerate stable of Type I or II (for the metric $g_D$). If a component $S_0$ of $S$ is of Type I, then the level set flow approaches $S_0$ from above, and it is possible to prolongate the foliation $\{A_t\}_{t\geq 0}$ by hand beyond $S_0$, so that around $S_0$ the foliation is smooth weakly mean convex. We perform this around each such component of $S$. After this foliation goes beyond $S$, we can run the level set flow again, and we can repeat that process, extending the foliation whenever the level set flow converges to a degenerate stable minimal hypersurface of Type I. This way, we construct a foliation $\{\tilde{A}_t\}_{t\geq 0}$, that we can suppose sweeps out a region of $B$ of maximal volume. We can make sure to not get trapped at a degenerate stable minimal hypersurface of Type I by a compactness argument. This construction can only stop if 
$$\bigcap_{t\geq 0}\tilde{A}_t \cap \interior(B) = \varnothing$$
 (then the claim is true), 
or if the foliation arrives at a non-empty minimal hypersurface intersecting $\interior(B)$: in that case, the components intersecting $\interior(B)$ are strictly stable or degenerate stable of Type II. 

We will assume the last case in the remaining of the proof, namely $\bigcap_{t\geq 0}\tilde{A}_t \cap \interior(B) \neq \varnothing$. We write $B_{\mathrm{core}}:=\bigcap_{t\geq 0}\tilde{A}_t $. It is a union of an $(n+1)$-dimensional compact manifold $B_{\mathrm{core}}^{(1)}$ and a minimal hypersurface $B_{\mathrm{core}}^{(2)}$.
By the above remark, the connected components of $B_{\mathrm{core}}^{(2)}$ intersecting $\interior(B)$ are strictly stable or degenerate stable of Type II inside $(B''_\infty,g_D)$ (hence locally area minimizing). Similarly, all the boundary components of $B_{\mathrm{core}}^{(1)}$ are either strictly stable or degenerate stable of Type II inside $B_{\mathrm{core}}^{(1)}$ (hence locally area minimizing inside $B_{\mathrm{core}}^{(1)}$). By monotonicity along the flow, 
$$\Vol_n(\partial B_{\mathrm{core}}^{(1)}) + 2\Vol_n(B_{\mathrm{core}}^{(2)}) \leq \Vol_n(\partial B).$$ 
Now two cases can happen:
\begin{itemize}
\item either the $(n+1)$-dimensional volume of $B_{\mathrm{core}}\cap B$ is $0$, then define $\Sigma_{B,D}$ to be the union of the connected components of $B_{\mathrm{core}}^{(2)}$ intersecting $\interior(B)$. This hypersurface $\Sigma_{B,D}$ is a closed embedded locally area-minimizing hypersurface in $ (B''_\infty,g_D)$ of $n$-volume at most $\Vol_n(\partial B)$, so that $B\backslash \Sigma_{B,D}$ has a weakly mean convex foliation, 
\item or the $(n+1)$-dimensional volume of $B_{\mathrm{core}}\cap B$ is positive so that $B_{\mathrm{core}}^{(1)}$ is a non-trivial $(n+1)$-dimensional compact manifold; in that case Theorem \ref{sadddle} produces a saddle point $\Gamma_D\subset (B_{\mathrm{core}}^{(1)},g_D)$ of Morse index at most one, which intersects $B$ by volume minimality of $B''_\infty$ (see beginning of proof). Its $n$-volume is at most 
$$\Vol_n(\partial B) + \max_{t\in[0,1]} \mathbf{M}(\Phi(t))+2.$$
Indeed we already saw in the proof of Theorem \ref{a} that there is a sweepout $\Psi$ of $\partial B''_\infty$ such that $\max_{t\in[0,1]}\mathbf{M}(\Psi(t)) \leq  \max_{t\in[0,1]} \mathbf{M}(\Phi(t)) +1$, and if $\Omega_t$ is the open set bounded by $\Psi(t)$ inside $B''_\infty$, $\partial (\Omega_t\cap B_{\mathrm{core}}^{(1)})$ gives a sweepout of $B_{\mathrm{core}}^{(1)}$ and its width is clearly bounded above by
$$\max_{t\in[0,1]} \mathbf{M}(\Psi(t)) + \Vol_n(\partial B_{\mathrm{core}}^{(1)}) \leq  \max_{t\in[0,1]} \mathbf{M}(\Phi(t))+1 + \Vol_n(\partial B).$$
\end{itemize}

In all cases we just checked that the claim is true.

If we take $r\to \infty$, larger and larger domains $D\supset B_r(p)$, and if the first bullet of the claim always occurs for $r$ large, then by thickness at infinity and \cite{Sharp}, the diameter of $\Sigma_{B,D}$ is uniformly bounded so for $r$ large enough $\Sigma_B:=\Sigma_{B,D}$ is a closed embedded locally area minimizing hypersurface for the original metric $g$, and $B\backslash \Sigma_B$ has a weakly mean convex foliation. This is item (2) of the theorem.

If for a sequence of radii $r_j\to \infty$ and choice of $(D_j,g_{D_j})$, the second bullet occurs, then we take a converging subsequence of saddle points $\Gamma_{D_j}$ (see \cite{Sharp}), and since $(M,g)$ is thick at infinity, $\Gamma_j$ is actually a saddle point for the original metric $g$ if $j$ is large and it intersects $B$. This is item (1) of the theorem.

\end{proof}

We end this subsection with a boundary version of Theorem \ref{b}, which will be used later. Its proof follows by inspecting the proof of Theorem \ref{b}.

In the following statement, a compact domain $B$ inside a complete $(n+1)$-manifold $M$ with boundary is by definition a compact $(n+1)$-submanifold of $M$ with smooth boundary. The definition of ``singular weakly mean convex foliation'' is the same as the one given at the start of Subsection \ref{womm}.

\begin{theo}  \label{itsok}
Let $({M},g)$ be an $(n+1)$-dimensional  complete manifold with smooth boundary with $2\leq n\leq 6$, thick at infinity. Suppose that the boundary $\partial M$ is a locally finite union of closed minimal hypersurfaces. Then for any compact domain $B\subset M$, 
\begin{enumerate}
\item either $M$ contains in its interior a saddle point minimal hypersurface intersecting $B$,
\item or there is an embedded closed locally area minimizing hypersurface $\Sigma_B\subset (M,g)$ (maybe empty) such that $B\backslash \Sigma_B$ has a singular weakly mean convex foliation. 
\end{enumerate}
In case (2), if $\{K_t\}_{[0,1)}$ is the singular weakly mean convex foliation, then for any connected component of $S$ of $\partial M$ included in $B$ and locally area minimizing in $B$, for any $t\in[0,1)$, we have $S\subset K_t$.
\end{theo}


\subsection{Min-max in a manifold generated by a saddle point minimal hypersurface with the level set flow} \label{Appendix D}

Let $(M^{n+1},g)$ be a complete manifold thick at infinity. We assume in this subsection that the metric $g$ satisfies Condition [M]. 

We will often use $\{\Sigma_t\}_{t\in[0,1]}$ or similar notations for $1$-sweepouts of a region $R$ of $M$. Sometimes we will define $\Sigma_t$ as a hypersurface, even though rigorously speaking, each $\Sigma_t$ should be a current in $\mathcal{Z}_{n,rel}(R,\mathbb{Z}_2)$ (see \cite[Definition 1.20]{Alm1}, \cite[2.2]{LioMaNe}). For simplicity we will also denote by $\Vol_n(\Sigma_t)$ its mass instead of using $\mathbf{M}(\Sigma_t)$.  See Appendix B for definitions of sweepouts (we will use in this section the notion of $p$-widths $\omega_p$, note that $\omega_1$ is different from the width $W$ used in previous sections because they correspond to different kinds of sweepouts).

Let $\Gamma$ be a saddle point minimal hypersurface in $(M,g)$. We will treat the case where $\Gamma$ is 2-sided for simplicity, but the case where it is 1-sided is completely analogous. $\Gamma$ has to be either unstable, or degenerate stable of Type III, and so we can find a neighborhood $N_\Gamma$ of $\Gamma$ and a diffeomorphism $\phi:\Gamma\times(-\delta_1,\delta'_1) \to N_\Gamma$ such that $\phi(\Gamma\times\{0\}) = \Gamma$, the mean curvature of $\phi(\Gamma\times\{s\})$ is either vanishing or non-zero pointing away from $\Gamma$, and $\phi(\Gamma\times\{-\delta_1\})$, $\phi(\Gamma\times\{\delta'_1\})$ have non-zero mean curvature.

Let $K_0:=M\backslash N_\Gamma$. Let $B_k$ be an exhausting sequence of compact domains with smooth boundary, containing $N_\Gamma$. We choose a metric $g_k$ on $B_k$ verifying:
\begin{enumerate}
\item $\partial B_k$ is minimal strictly stable with respect to $g_k$,
\item  $\|g_k-g\|_{C^0}\leq \mu_k$, where the right-hand side  is the $C^0$ distance between $g$ and $g_k$ on $B_k$, computed with $g$, and $\mu_k$ converges to zero as $k\to\infty$,
\item $g=g_k$ except in a $1/k$-neighborhood of $\partial B_k$.
\end{enumerate}  

For each integer $k$, let $K^{(k)}_0:=(B_k\cap K_0,g_k)$. It is a weakly mean convex subset of $(B_k,g_k)$. We run the level set flow to $K^{(k)}_0$ and get a family $\{K^{(k)}_t\}_{t\geq0}$. Denote by $X_k$ the metric completion of $B_k\backslash \bigcap_{t\geq0} K^{(k)}_t$. $X_k$ is a compact manifold with closed minimal stable boundary components. Each of them has an $n$-volume bound coming from monotonicity properties of the level set flow and is minimal locally area minimizing inside $X_k$. Besides $\partial K^{(k)}_t$ locally ``converges'' to $\partial X_k$ smoothly (see Appendix C). 
Since interior points of $X_k$ are identified with points of $M$, there is a natural map $X_k \to M$.

As $k\to\infty$, $X_k$ subsequently converges in the Gromov-Hausdorff topology to a manifold $X$ that is naturally endowed with the metric $g$. $X$ is a (maybe non-compact) manifold with minimal stable boundary components. Their total $n$-volume is finite, each of them is compact because $(M,g)$ is thick at infinity, and minimal locally area minimizing inside $X$. By Condition [M], it means that each component of $\partial X$ is either strictly stable or degenerate stable of Type II. 
We will refer to this construction by saying that $X$ is \textit{generated} by $\Gamma$, $K_0$ in $(M,g)$.

Let $\mathcal{C}(X_k)$ denote the result of gluing the compact manifold $X_k$ to straight half-cylinders $(\partial X_k \times [0,\infty), g\big|_{\partial X_k} \oplus dt^2)$ along $\partial X_k$. The final metric (still denoted by $g_k$) is Lipschitz continuous around $\partial X_k$ in general. 
Let us define 
$$\tilde{\omega}_p(X_k,g_k):= \omega_p(\mathcal{C}(X_k),g_k)$$
where the $p$-widths $\omega_p$ of a possibly non-compact manifold are defined in \cite[Definition 8]{AntoineYau}. Note that if $X_k$ has no boundary then $\tilde{\omega}_p(X_k,g)=\omega_p(X_k,g)$. It is clear that the first width of $(\mathcal{C}(X_k),g_k)$ is finite. Indeed let $\{\Sigma_t\}_{t\in[0,1]}$ be an arbitrary $1$-sweepout of $X_k\backslash K^{(k)}_0=X\backslash K_0$ with $\Sigma_0=0$ and $\Sigma_1=\partial K_0$, let $\{S^{(k)}_t\}_{t\in[0,\infty)}$ with
$S^{(k)}_t:=\partial X_k \times \{t\} \subset \mathcal{C}(X_k)$. Then the three families $\{\Sigma_t\}_{t\in[0,1]}$, $\{\partial K^{(k)}_t\}_{t\geq 0}$, $\{S^{(k)}_t\}_{t\geq 0}$ concatenated gives an explicit $1$-sweepout of any bounded subset of $(\mathcal{C}(X_k),g_k)$ and 
\begin{align*}
\omega_1(\mathcal{C}(X_k),g_k) & \leq \max\{\sup_{t\in[0,1]}{\Vol_n(\Sigma_t)},\sup_t{\Vol_n(\partial K^{(k)}_t)}, \sup_{t}{\Vol_n(S^{(k)}_t)}\} \\
& \leq \max\{\sup_{t\in[0,1]}{\Vol_n(\Sigma_t)},\Vol_n(\partial K_0)\} <\infty.
\end{align*}
In \cite[proof of Theorem 5.1, Claim 5.6]{MaNeinfinity}, it is explained that given a compact manifold $Y$, given a $1$-sweepout $\Phi_1:[0,1]\to \mathcal{Z}_{n,rel}(Y,\mathbb{Z}_2)$ with ``no concentration of mass'' (see \cite{MaNeinfinity}), with 
$$\sup_{t\in[0,1]}\mathbf{M}(\Phi_1(t))\leq C_1,$$
one can construct $p$-sweepouts $\Phi_p:\mathbb{R}P^p\to \mathcal{Z}_{n,rel}(Y,\mathbb{Z}_2)$ for any $p$, satisfying $\sup_{t\in \mathbb{R}P^p}\mathbf{M}(\Phi_p(t))\leq p.C_1$. In particular, we have the following general \newline

\textbf{Fact:} if the first width is finite, then all the widths are finite and the $p$-width is bounded by $p$ times the first width. 
\newline

Using that fact, it is simple to check that for each $p$, $\tilde{\omega}_p(X_k,g_k)$ is bounded between to positive constants independent of $k$. We choose a subsequence of $B_k$ (that we do not rename) in a way that $\tilde{\omega}_p(X_k,g_k)$ converges for each $p$ and we define
$$\tilde{\omega}_p(X,g):=\lim_{k\to \infty}\tilde{\omega}_p(X_k,g_k).$$

For each $k$ we also set
 $$\mathcal{A}(\Gamma,g_k):=\max \{\Vol_n(\hat{C}) ; \hat{C} \text{ component of } \partial X_k\},$$
 where the $n$-volume of $\hat{C}$ is computed with $g_k$. By the monotonicity property of the level set flow, the sequence $\mathcal{A}(\Gamma,g_k)$ is bounded. We can assume that this sequence converges (by taking a subsequence if necessary) and we define
$$\mathcal{A}(\Gamma,g) : =\lim_{k\to \infty}  \mathcal{A}(\Gamma,g_k).$$
In what follows, $\partial X$ (resp. $X$, $\mathcal{A}(\Gamma,g)$) will roughly play the role of $\partial U$ (resp. $U$, the $n$-volume of the largest boundary component of $U$) in \cite[Section 2]{AntoineYau}. For simplicity, we will write $\mathcal{A}$, $\mathcal{A}_k$, $\tilde{\omega}_p(g)$, $\tilde{\omega}_p(g_k)$, for $\mathcal{A}(\Gamma,g)$, $\mathcal{A}(\Gamma,g_k)$, $\tilde{\omega}_p(X,g)$, $\tilde{\omega}_p(X_k,g_k)$ respectively.

By \cite{MaNeindexbound} and \cite[Theorem 10]{AntoineYau}, for any fixed $p$, there is a stationary integral varifold $V_k$ with support a closed minimal hypersurfaces of Morse index at most $p$ embedded inside the interior of $(X_k,g_k)$, each component intersecting $M\backslash K_0$ by the maximum principle (\cite[Theorem 3.5]{WhiteregMCF}), such that the total mass of $V_k$ is $\tilde{\omega}_p(g_k)$. Multiplicities of the 1-sided components of $\spt(V_k)$ are even. Making $k\to \infty$ and taking a subsequence, $V_k$ converges in the varifold sense to a stationary integral varifold $V_\infty$ with support a minimal hypersurface 
embedded inside the interior of $X$ (\cite{Sharp}), and closed because $(M,g)$ is thick at infinity.  Moreover, since $g=g_k$ except very close to $\partial B_k$, $V_\infty$ is stationary for the original metric $g$ if $k$ is large.
In other words, 
we just proved the following
\begin{prop}\label{etouiminmax}
For all $p$ there are disjoint connected closed embedded minimal hypersurface $\Gamma^{(p)}_i\subset (\interior(X),g)$ ($1\leq i\leq J$) and integers $m_{p,i}$ ($m_{p,i}$ is even when $\Gamma^{(p)}_i$ is 1-sided) satisfying
$$\tilde{\omega}_p(X,g) = \sum_{i=1}^J m_{p,i} \Vol_n(\Gamma^{(p)}_i).$$
\end{prop}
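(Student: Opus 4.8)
The plan is to combine the Gromov–Hausdorff convergence $X_k\to X$ with the min–max theory for manifolds with cylindrical ends developed in \cite{AntoineYau}, exactly as set up in the paragraphs preceding the statement. I would argue as follows.

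\textbf{Step 1: Extract minimal hypersurfaces in each $X_k$.} Fix $p$. By construction, $X_k$ is a compact manifold with closed, minimal, stable, locally area minimizing boundary components, so $\mathcal{C}(X_k)$ is a manifold with finitely many straight cylindrical ends and $\tilde\omega_p(X_k,g_k)=\omega_p(\mathcal{C}(X_k),g_k)$ is finite (by the Fact) and, by the a priori bounds already established, lies between two positive constants independent of $k$. Applying \cite[Theorem 10]{AntoineYau} to $(\mathcal{C}(X_k),g_k)$ at parameter $p$ produces a stationary integral varifold $V_k$ whose support is a closed embedded minimal hypersurface of Morse index at most $p$ inside $\interior(\mathcal{C}(X_k))$, with total mass $\tilde\omega_p(X_k,g_k)$, the $1$-sided components carrying even multiplicity. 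By the maximum principle applied to the foliation of the cylindrical ends (and the strict mean convexity near $\partial B_k$), $\spt(V_k)$ stays in the interior of $X_k$ and each component meets $M\setminus K_0$; moreover, since $g=g_k$ outside a $1/k$-neighborhood of $\partial B_k$, for $k$ large $V_k$ is stationary for the original metric $g$ as well.

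\textbf{Step 2: Pass to the limit.} Since $\tilde\omega_p(X_k,g_k)$ is uniformly bounded and the $V_k$ have uniformly bounded index ($\le p$) and mass, the compactness theory of Sharp \cite{Sharp} applies: a subsequence of $V_k$ converges in the varifold sense to a stationary integral varifold $V$ whose support is a minimal hypersurface embedded in $\interior(X)$ with index at most $p$, $1$-sided components still of even multiplicity, and total mass $\lim_k \tilde\omega_p(X_k,g_k)=\tilde\omega_p(X,g)$. Here I would note that no mass escapes to infinity: $X$ has finite total boundary $n$-volume and the monotonicity formula together with the a priori mass bound confines the limit to a compact region; alternatively one uses that $X_k\to X$ in Gromov–Hausdorff and the $\tilde\omega_p$ were defined precisely as these limits. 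Because $(M,g)$ is thick at infinity and each connected component of $\spt(V)$ is a connected complete finite volume minimal hypersurface in $(X,g)\subset(M,g)$ (after the usual reflection/doubling argument across $\partial X$, whose components are minimal), each such component is closed.

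\textbf{Step 3: Read off the statement.} Writing $\spt(V)=\bigcup_{i=1}^J \Gamma^{(p)}_i$ as a disjoint union of connected closed embedded minimal hypersurfaces and letting $m_{p,i}$ be the integer multiplicity of $\Gamma^{(p)}_i$ in $V$ (even when $\Gamma^{(p)}_i$ is $1$-sided, inherited from the $V_k$ and preserved under varifold convergence with smooth multiplicity-one-per-sheet limits), we obtain $\tilde\omega_p(X,g)=\sum_{i=1}^J m_{p,i}\Vol_n(\Gamma^{(p)}_i)$, which is the assertion.

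\textbf{Main obstacle.} The delicate point is Step 2: ensuring the varifold limit neither loses mass at infinity nor off the boundary $\partial X$, and that the limit varifold is genuinely a sum of closed smooth minimal hypersurfaces rather than something supported partly on $\partial X$ or noncompact. This is handled by the uniform index bound (so Sharp's compactness gives smooth subsequential convergence away from finitely many points, with no points in dimensions $2\le n\le 6$), by the uniform mass bound plus monotonicity to prevent escape to infinity, by the fact that each $\partial X_k$ is locally area minimizing (hence stable) with uniformly bounded $n$-volume so that the doubling is legitimate and thickness at infinity applies, and by the maximum principle forcing the supports into $\interior(X)$; once these are in place the identity of masses and the parity of multiplicities pass to the limit routinely.
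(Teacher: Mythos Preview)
Your proposal is correct and follows essentially the same approach as the paper: apply \cite[Theorem 10]{AntoineYau} to each $(\mathcal{C}(X_k),g_k)$ to produce $V_k$ with index at most $p$ and mass $\tilde\omega_p(X_k,g_k)$, pass to the limit using Sharp's compactness, and invoke thickness at infinity to conclude closedness. One small remark: the ``reflection/doubling argument across $\partial X$'' you mention is unnecessary, since interior points of $X$ are identified with points of $M$ via the natural map, so a connected finite volume minimal hypersurface in $\interior(X)$ is already one in $(M,g)$ and thickness at infinity applies directly.
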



We denote by $\Vol_n(.,g_k)$ the $n$-volume computed with $g_k$. If $\Omega$ is a compact domain of $M$, for all $k$ it induces a compact subset $\Omega'$ in $X_k$ by pulling back with the natural map $X_k\to M$, and since $X_k\subset \mathcal{C}(X_k)$, $\Omega'$ in turn induces a compact subset of $\mathcal{C}(X_k)$ that we call $\mathfrak{i}_k(\Omega)$. For a region $R\subset\mathcal{C}(X_k)$ with rectifiable boundary, we will denote by $\mathcal{Z}_{n,rel}(R;\mathbb{Z}_2)$ the space of relative cycles mod 2 in the closure of $R$ (see \cite[Definition 1.20]{Alm1}, \cite[2.2]{LioMaNe}).

Before continuing, let us remark the following.
\begin{lemme} \label{tong}
For all $\bar{\varepsilon}>0$, there is a compact domain $\Omega\subset M$ so that for any $k$ large enough, 
the region $X_k\backslash\mathfrak{i}_k(\Omega) \subset \mathcal{C}(X_k)$ has a $1$-sweepout $\{S_t\}_{t\in[0,1]}$ where $S_t\in \mathcal{Z}_{n,rel}(X_k\backslash\mathfrak{i}_k(\Omega); \mathbb{Z}_2)$ for each $t\in[0,1]$, satisfying
$$\sup_{t\in[0,1]}\{\Vol_n(S_t,g_k)\} \leq \mathcal{A}+\bar{\varepsilon}.$$
\end{lemme}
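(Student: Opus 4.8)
The plan is to construct the $1$-sweepout of $X_k\backslash \mathfrak{i}_k(\bar D)$ explicitly by splicing together three pieces: a sweepout coming from the complement of $K_0$ inside a fixed large region, a trivial ``short'' piece coming from the level set flow beyond $\bar D$, and the product sweepout on the cylindrical ends of $\mathcal{C}(X_k)$, all of which have $n$-volume bounded by $\mathcal{A}+\bar\epsilon$ once $\bar D$ is chosen large. First I would recall that $X_k$ is obtained from $(B_k\cap K_0,g_k)$ by running the level set flow, and $\bigcap_{t\ge 0}K^{(k)}_t$ has boundary converging smoothly to $\partial X_k$; so for a fixed neighborhood scale the flowed hypersurfaces $\partial K^{(k)}_t$ have $n$-volume no bigger than $\Vol_n(\partial K_0,g_k)$, and this quantity converges to $\Vol_n(\partial K_0,g)$, which is comparable to (indeed bounded by something close to) $\mathcal{A}$ since $\partial K_0=\partial N_\Gamma$ sits just outside the minimal hypersurface $\Gamma$. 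Actually the cleanest route is: the region $X_k\backslash\mathfrak{i}_k(\bar D)$, for $\bar D$ large, is within Hausdorff-distance controlled by $1/k$ of the piece of $X$ lying outside $\bar D$, which is a union of ``thin'' ends whose cross sections have $n$-volume at most $\mathcal{A}$ plus the contribution of the straight cylinders $\partial X_k\times[0,\infty)$, each of cross section $\le \mathcal{A}_k\to\mathcal{A}$.

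Concretely, the key steps in order are: (1) Use thickness at infinity together with the finiteness of the total $n$-volume of $\partial X$ and the monotonicity properties of the level set flow to show that, given $\bar\epsilon>0$, there is a compact domain $\bar D\subset M$ such that the portion of $X$ (hence of $X_k$ for $k$ large, via the natural map and Gromov--Hausdorff convergence) outside $\bar D$ admits a foliation/sweepout by hypersurfaces of $n$-volume $\le \mathcal{A}+\bar\epsilon/2$; this is where the monotone decay of $\Vol_n(\partial K^{(k)}_t)$ and the convergence $\mathcal{A}_k\to\mathcal{A}$ enter. (2) On the straight half-cylinder part $\partial X_k\times[0,\infty)$ of $\mathcal{C}(X_k)$, use the product sweepout $\{\partial X_k\times\{t\}\}$, which has constant cross-sectional $n$-volume equal to $\Vol_n(\partial X_k,g\big|_{\partial X})\le \mathcal{A}_k\le \mathcal{A}+\bar\epsilon/2$ for $k$ large. (3) Concatenate these two families (with appropriate reparametrization and with the empty current inserted at the start) into a single continuous map $\{S_t\}_{t\in[0,1]}$ into $\mathcal{Z}_{n,rel}(X_k\backslash\mathfrak{i}_k(\bar D);\mathbb{Z}_2)$; verify it is a genuine $1$-sweepout by checking it detects the fundamental class of the relative cycle space, exactly as in the explicit concatenation used earlier in this subsection for the finiteness of $\omega_1(\mathcal{C}(X_k),g_k)$. (4) Read off the bound $\sup_t \Vol_n(S_t,g_k)\le \mathcal{A}+\bar\epsilon$.

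The main obstacle I expect is step (1): controlling uniformly in $k$ the $n$-volume of the flowed hypersurfaces $\partial K^{(k)}_t$ once $t$ is large (equivalently, once we are far out in $M$), and ensuring the ``outside $\bar D$'' part of $X_k$ really is swept by hypersurfaces of volume close to $\mathcal{A}$ rather than close to $\Vol_n(\partial K_0)$, which could a priori be larger. The resolution uses that the level set flow started from the strictly mean convex $\partial K^{(k)}_0$ has $\Vol_n(\partial K^{(k)}_t,g_k)$ nonincreasing in $t$ (first-variation/mean-convexity), together with the fact that far out in $M$ the flow has essentially reached $\partial X_k$ whose components have $n$-volume $\le\mathcal{A}_k$; picking $\bar D$ large enough that outside it the flow is uniformly close to $\partial X_k$ (uniformly in $k$, using the smooth convergence $\partial K^{(k)}_t\to\partial X_k$ as $t\to\infty$ and the Gromov--Hausdorff convergence $X_k\to X$ with the finiteness of $\Vol_n(\partial X)$ from thickness at infinity) gives the desired $\mathcal{A}+\bar\epsilon/2$ bound. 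Everything else is the routine concatenation-of-sweepouts bookkeeping already carried out in this section.
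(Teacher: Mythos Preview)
Your proposal has a genuine gap at exactly the point you flag as the main obstacle, and the resolution you suggest does not work.

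First, a minor point: the region $X_k\backslash\mathfrak{i}_k(\bar D)$ lies entirely inside $X_k$; the straight half-cylinders of $\mathcal{C}(X_k)$ are not part of it, so your step (2) is not needed here.

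The real problem is step (1). Your argument rests on the assertion that the flowed hypersurfaces $\partial K^{(k)}_t$ (or their restrictions outside $\bar D$) have $n$-volume close to $\mathcal{A}$. This is false in general. At $t=0$ the flow starts at $\partial K_0=\partial N_\Gamma$, whose $n$-volume is close to $2\Vol_n(\Gamma)$ (for $\Gamma$ two-sided), and there is no relation forcing $\Vol_n(\Gamma)$ to be comparable to $\mathcal{A}$; in fact Lemma~\ref{pareil} shows $\Vol_n(\Gamma)>\mathcal{A}$ in the relevant situation. Monotonicity of the flow only gives decay of the \emph{total} mass toward $\Vol_n(\partial X_k)$, which is a sum over possibly many components and can again exceed $\mathcal{A}$. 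Your fallback claim that ``far out in $M$ the flow has essentially reached $\partial X_k$'' conflates a large-time limit with a spatial localization: at any fixed time the flow hypersurface can be connected and carry mass well above $\mathcal{A}$ in the region outside $\bar D$. Gromov--Hausdorff convergence $X_k\to X$ does not help either, because $X\backslash\bar D$ may contain genuine non-compact ends of $X$ (not bounded by $\partial X$), and nothing in your argument produces cross-sections of those ends with volume $\le\mathcal{A}+\bar\epsilon$.

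The paper's proof takes a different route that avoids this issue. It fixes $\bar D:=Y_{\bar k}$ (the image of $X_{\bar k}$ in $M$) for $\bar k$ large enough that each component $\hat C$ of $\partial Y_{\bar k}$ has volume at most $\mathcal{A}+\bar\epsilon/2$. For each such $\hat C$, it solves a \emph{constrained} minimization problem in $(B_k,g_k)$: minimize among cycles reachable from $\hat C$ by an $\mathbf{F}$-continuous path staying in $B_k\backslash N_\Gamma$ with mass $\le\mathcal{A}+\bar\epsilon$ throughout. The resulting region $Z_k:=Y_{\bar k}+\sum_{\hat C}\mathcal{R}(\hat C)$ has weakly mean convex boundary, hence is a barrier for the level set flow in $(B_k,g_k)$, forcing $Y_k\subset Z_k$. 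Then $Y_k\backslash\bar D$ is contained in $\bigcup_{\hat C}\mathcal{R}(\hat C)$, and the minimization paths themselves (concatenated, then restricted) furnish the desired $1$-sweepout with mass $\le\mathcal{A}+\bar\epsilon$. The two missing ingredients in your approach are precisely this constrained minimization (which builds the volume bound into the paths by fiat) and the barrier argument (which shows those paths actually cover $Y_k\backslash\bar D$).
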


\begin{proof}

Recall that $N_\Gamma$ is the neighborhood of $\Gamma$ introduced at the beginning of this subsection.

For any $k$, let $Y_{{k}}$ be the compact domain image of $X_{{k}}$ under the natural map $X_{{k}}\to M$. By definition of $\mathcal{A}$ and the properties of $g_k$, for $\bar{k}$ large enough $\mathcal{A}_{\bar{k}}\leq \mathcal{A}+\bar{\varepsilon}/2$. Then the boundary components of $Y_{\bar{k}}$ are closed 2-sided and each of them has $n$-volume at most $\mathcal{A}_{\bar{k}}$. For $k$ large enough compared to $\bar{k}$, $B_k$ contains $Y_{\bar{k}}$ in its interior. Let $\hat{C}$ be any component of $\partial Y_{\bar{k}}$, that we consider as a cycle in $\mathcal{Z}_n(B_k;\mathbb{Z}_2)$. Consider the following minimization problem with constraint (we used a similar minimization problem in the proof of Theorem \ref{a}): minimize $\Vol_n(\hat{C}_1,g_k)$ among cycles $\hat{C}_1$ such that 
\begin{itemize}
\item there is a path $\{\hat{C}_t\}_{t\in[0,1]}$ continuous in the $\mathbf{F}$-topology with $\hat{C}_0=\hat{C}$, 
\item the support of each $\hat{C}_t$ is in $B_k \backslash N_\Gamma$,
\item $\Vol_n(\hat{C}_t,g_k)\leq \mathcal{A}+\bar{\varepsilon}$ for all $t\in[0,1]$.
\end{itemize}
A solution (i.e a minimizer $\hat{C}_1$ and a path $\{\hat{C}_t\}_{t\in[0,1]}$) exists by compactness in the flat topology, and interpolation results \cite[Proposition A.2]{MaNeindexbound}. The support of $\hat{C}_1$ is a stable closed minimal hypersurface. Consider the image of $\{\hat{C}_t\}_{t\in[0,1]}$ by the Almgren map, $\mathcal{A}(\{\hat{C}_t\})\in \mathbf{I}_{n+1}(B_k;\mathbb{Z}_2)$ (see Appendix B). We call it $\mathcal{R}(\hat{C})$ for simplicity and to avoid confusion with the notation $\mathcal{A}=\mathcal{A}(\Gamma,g)$. 

Now we can repeat that construction for any component of $\partial Y_{\bar{k}}$. Consider $Y_{\bar{k}}$ as an element of $\mathbf{I}_{n+1}(B_k;\mathbb{Z}_2)$ and let 
$$Z_k:= Y_{\bar{k}} +\sum_{\hat{C} \text{ component of } \partial Y_k} \mathcal{R}(\hat{C}) \in \mathbf{I}_{n+1}(B_k;\mathbb{Z}_2).$$
The support of $Z_k$ is non-empty because it contains $N_\Gamma$ and is a compact $(n+1)$-dimensional region with weakly mean convex boundary for $g_k$. In fact, since the minimization problems for components of $\partial Y_{\bar{k}}$ are independent of one another, the different $\mathcal{R}(\hat{C})$ can a priori intersect each other so $\partial Z_k$ is not necessarily smooth but at least the interior of $\spt(Z_k)$ is locally the disjoint union of intersections of domains with smooth minimal boundary. Hence $Z_k$ serves as a barrier domain for the level set flow, which means that if $Y_k$ is the image of $X_k$ by the natural map $X_k\to M$, then by a slight abuse of notations
$$Y_k\subset Z_k.$$
Set 
$$\Omega:=Y_{\bar{k}}.$$
By concatenating the pathes $\{\hat{C}_t\}$ one after the other (for $\hat{C}$ component of $\partial Y_{\bar{k}}=\Omega$) and restricting these cycles to $Y_k \backslash \Omega$, we construct a $1$-sweepout $\{S^{(1)}_t\}$ of $Y_k \backslash \Omega$ such that $\sup_t \Vol_n(S^{(1)}_t, g_k) \leq \mathcal{A}+\bar{\varepsilon}$, which pulls back to a sweepout called $\{S_t\}$ of $X_k \backslash \mathfrak{i}_k(\Omega)\subset \mathcal{C}(X_k)$, where $\mathfrak{i}_k(\Omega)\subset X_k \subset \mathcal{C}(X_k)$ has been defined just before this lemma and where $S_t\in \mathcal{Z}_{n,rel}(X_k\backslash\mathfrak{i}_k(\Omega); \mathbb{Z}_2)$ for each $t\in[0,1]$. This sweepout still satisfies
$$\sup_{t\in[0,1]}\{\Vol_n(S_t,g_k)\} \leq \mathcal{A}+\bar{\varepsilon}$$
so the lemma is proved.

\end{proof}

We want to show the analogue of \cite[Theorem 9]{AntoineYau} for $\tilde{\omega}_p(g)$:
\begin{prop}\label{flore}
$$
\forall p\geq 1\quad \tilde{\omega}_{p+1}(g) - \tilde{\omega}_p(g) \geq \mathcal{A}, \quad \lim_{p\to\infty} \frac{\tilde{\omega}_p(g)}{p} = \mathcal{A}.
$$
\end{prop}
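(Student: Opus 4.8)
This is the non-compact analogue of \cite[Theorem 9]{AntoineYau}, and I would prove it by transcribing that argument to the cylindrical-end manifolds $\mathcal{C}(X_k)$. Since $\tilde{\omega}_p(g)=\lim_{k}\tilde{\omega}_p(g_k)$ and $\mathcal{A}=\lim_k\mathcal{A}_k$, and since $g_k=g$ away from a $1/k$-neighbourhood of $\partial B_k$, it suffices to establish, for each fixed $p$ and every $\varepsilon>0$: (a) an upper bound $\tilde{\omega}_p(g_k)\le C_{\bar D}+p\,(\mathcal{A}_k+\varepsilon)$ with $C_{\bar D}$ independent of $p,k$; and (b) a gap inequality $\tilde{\omega}_{p+1}(g_k)\ge\tilde{\omega}_p(g_k)+\mathcal{A}_k-\varepsilon$ for $k$ large. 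Letting $k\to\infty$ in (b) gives the stated monotone gap; iterating it gives $\tilde{\omega}_p(g)\ge (p-1)\mathcal{A}$, hence $\liminf_p \tilde{\omega}_p(g)/p\ge\mathcal{A}$, and this together with (a) (divide by $p$, let $k\to\infty$, then $p\to\infty$, then $\varepsilon\to 0$) yields $\lim_p \tilde{\omega}_p(g)/p=\mathcal{A}$.

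\textbf{Upper bound (a).} Fix $\varepsilon>0$, and let $\bar D\subset M$ and the compact core $\mathfrak{i}_k(\bar D)\subset X_k\subset\mathcal{C}(X_k)$ be as in Lemma \ref{tong}. The complementary region $O_k:=\mathcal{C}(X_k)\setminus\mathfrak{i}_k(\bar D)$ — that is $X_k\setminus\mathfrak{i}_k(\bar D)$ together with the straight half-cylinders — has a $1$-sweepout of sup-mass at most $\mathcal{A}_k+\varepsilon$, obtained by concatenating the sweepout of $X_k\setminus\mathfrak{i}_k(\bar D)$ from Lemma \ref{tong} with the slice-sweepouts of the cylindrical ends, performed one end at a time so that only one slice of mass $\le\mathcal{A}_k$ is active at a time. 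Hence $\omega_1(O_k,g_k)\le\mathcal{A}_k+\varepsilon$, so by the \textbf{Fact} recalled above, $\omega_p(O_k,g_k)\le p\,(\mathcal{A}_k+\varepsilon)$. The Lusternik--Schnirelmann subadditivity of the volume spectrum (\cite{LioMaNe}, in the form used in \cite[Theorem 9]{AntoineYau}) applied to $\mathcal{C}(X_k)=\mathfrak{i}_k(\bar D)\cup O_k$ then gives $\tilde{\omega}_p(g_k)=\omega_p(\mathcal{C}(X_k),g_k)\le \omega_1(\mathfrak{i}_k(\bar D),g_k)+\omega_p(O_k,g_k)\le C_{\bar D}+p\,(\mathcal{A}_k+\varepsilon)$, where $C_{\bar D}:=\sup_k\omega_1(\mathfrak{i}_k(\bar D),g_k)<\infty$ because $g_k$ agrees with $g$ on the fixed set $\bar D$ for $k$ large.

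\textbf{The gap (b), the main obstacle.} Let $\hat C$ be a component of $\partial X_k$ with $\Vol_n(\hat C,g_k)=\mathcal{A}_k$, and let $E=\hat C\times[0,\infty)\subset\mathcal{C}(X_k)$ be the corresponding end. The crucial geometric input is that \emph{any} $1$-sweepout of $E$ (from $\hat C\times\{0\}$ out to infinity) realises mass $\ge\mathcal{A}_k$ at some parameter: a relative cycle in $E$ separating $\hat C\times\{0\}$ from the end is homologous to a slice $\hat C\times\{s\}$, which is area-minimising in the product metric and so has mass exactly $\mathcal{A}_k$. Given a $(p+1)$-sweepout $\Phi$ of $\mathcal{C}(X_k)$ with sup-mass $\le\tilde{\omega}_{p+1}(g_k)+\varepsilon$, the Lusternik--Schnirelmann ``detecting sub-family'' argument of \cite{LioMaNe} (following \cite[Theorem 9]{AntoineYau}, and using the cylindrical interpolation of \cite[Claim 5.6]{MaNeinfinity}) lets one peel off one parameter along the end $E$: for every large $L$ it produces a $p$-sweepout of the truncation $R_L$ of $\mathcal{C}(X_k)$ at height $L$ of sup-mass $\le\tilde{\omega}_{p+1}(g_k)+\varepsilon-\mathcal{A}_k+o(1)$, because the last factor of the generator is accounted for by a $1$-sweepout of $E$, which costs at least $\mathcal{A}_k$ and hence frees up that much mass. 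Thus $\omega_p(R_L,g_k)\le\tilde{\omega}_{p+1}(g_k)-\mathcal{A}_k+\varepsilon+o(1)$. Conversely, by Proposition \ref{etouiminmax} the varifold realising $\tilde{\omega}_p(g)$ is supported on a \emph{closed}, hence compact, minimal hypersurface of $X$; combined with the compactness of \cite{Sharp} this forces $\omega_p(R_L,g_k)\ge\tilde{\omega}_p(g_k)-\varepsilon$ for $L,k$ large. Comparing the two bounds and letting $\varepsilon\to0$, $L\to\infty$, $k\to\infty$ gives (b).

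\textbf{Where the difficulty lies.} The delicate part is (b): peeling off \emph{exactly one} parameter at a cost of \emph{exactly} $\mathcal{A}$, which requires the careful mod-$2$ cup-product bookkeeping of \cite{LioMaNe} together with the relative-cycle formalism on the truncations $R_L$; one must also check that the Lipschitz non-smoothness of $g_k$ across $\partial X_k$ and the Gromov--Hausdorff passage $k\to\infty$ — both controlled via thickness at infinity and \cite{Sharp} as arranged before the proposition — do not spoil any width estimate. The upper bound (a) is routine given Lemma \ref{tong} and the \textbf{Fact}.
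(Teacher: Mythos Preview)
Your overall plan --- pass to the compact approximants $(X_k,g_k)$, prove a gap and an upper bound there, then let $k\to\infty$ --- is exactly the paper's. But two concrete points are off, and you have inverted where the work actually lies.

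\textbf{The gap (b) is the easy part.} In the paper, \emph{nothing} is re-proved here: since $(X_k,g_k)$ is a compact manifold with stable minimal boundary and $\tilde{\omega}_p(g_k)=\omega_p(\mathcal{C}(X_k),g_k)$, \cite[Theorem~9]{AntoineYau} applies directly and gives $\tilde{\omega}_{p+1}(g_k)-\tilde{\omega}_p(g_k)\ge\mathcal{A}_k$ and $\tilde{\omega}_p(g_k)\ge p\,\mathcal{A}_k$. Passing to the limit yields the stated inequalities in one line. Your re-derivation via a Lusternik--Schnirelmann peeling is roughly how \cite[Theorem~9]{AntoineYau} is proved, but your step ``$\omega_p(R_L,g_k)\ge\tilde{\omega}_p(g_k)-\varepsilon$ because the min-max varifold of Proposition~\ref{etouiminmax} is compact'' is wrong: knowing that a closed minimal hypersurface of area $\tilde{\omega}_p(g_k)$ lies inside $R_L$ does \emph{not} bound $\omega_p(R_L)$ from below. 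That inequality is a pure width statement and must come from the definition of $\omega_p$ for non-compact manifolds (widths of an exhausting sequence of compacts approach the width of the whole), not from Proposition~\ref{etouiminmax} or \cite{Sharp}.

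\textbf{The upper bound (a) needs a different inequality.} The ``subadditivity'' you invoke, $\omega_p(\mathcal{C}(X_k))\le\omega_1(\mathfrak{i}_k(\bar D))+\omega_p(O_k)$, is not a valid form of the gluing lemma: to glue a $p$-sweepout of $O_k$ with a sweepout of the core you need a $p$-sweepout of the core, and the masses add. The paper does precisely this, using the Guth/Marques--Neves sublinear bound $\omega_p(\mathfrak{i}_k(\bar D))\le \underline{C}_0\,p^{1/(n+1)}$ for the fixed compact $\bar D$. After gluing (as in \cite{LioMaNe}) one gets
\[
\tilde{\omega}_p(g_k)\ \le\ p\,(\mathcal{A}+\bar\epsilon)\ +\ \underline{C}_0\,p^{1/(n+1)}\ +\ \underline{C}_1,
\]
with $\underline{C}_0,\underline{C}_1$ independent of $k$; dividing by $p$ and letting $k\to\infty$, $p\to\infty$, $\bar\epsilon\to0$ gives $\limsup \tilde{\omega}_p(g)/p\le\mathcal{A}$. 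So the fix is simple, but your stated inequality does not hold as written.

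In short: cite \cite[Theorem~9]{AntoineYau} for (b); for (a) replace your $\omega_1$-of-the-core term by the sublinear $p^{1/(n+1)}$ bound and use the $p$-sweepout gluing of \cite{LioMaNe}. The real content of the proposition is the upper bound, not the gap.
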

\begin{proof}
By Theorem 9 of \cite{AntoineYau}, for all $p\geq 1$ fixed, $\tilde{\omega}_{p+1}(g_k) - \tilde{\omega}_p(g_k) \geq \mathcal{A}_k$ and $\tilde{\omega}_{p}(g_k) \geq p.\mathcal{A}_k$, hence passing to the limit when $k\to\infty$, 
\begin{equation} \label{fleur}
\tilde{\omega}_{p+1}(g) - \tilde{\omega}_p(g) \geq \mathcal{A},
\quad \tilde{\omega}_{p}(g) \geq p.\mathcal{A}.
\end{equation}
Let $\bar{\varepsilon}>0$ be fixed, and let $k$ be large enough so that Lemma \ref{tong} is satisfied and $\mathcal{A}_k\leq \mathcal{A}+\bar{\varepsilon}$.
Let $D$ be a compact region of $\mathcal{C}(X_k)$ containing $X_k$. The region $D\backslash X_k$ has a $1$-sweepout $\{T_t\}_{t\in[0,1]}$ with 
\begin{equation} \label{shouldbe}
\sup_{t}\Vol_n(T_t) \leq \mathcal{A}+\bar{\varepsilon}.
\end{equation}
To explain that, let $\Sigma_1,...,\Sigma_m$ be the components of $\partial X_k$, and let $L$ be a large number. Consider the function 
$$f_L: \partial X_k \times [0,L]\to \mathbb{R}$$
$$f_L(x,t) := (j-1)L +t \text{  if } (x,t)\in\Sigma_j\times[0,L].$$
Then the level sets of $f_L$ gives a $1$-sweepout of $D\backslash X_k$ as desired if $L$ is large enough so that $X_k \cup \partial X_k \times [0,L]$ contains $D$, i.e. we consider 
$$T_t:=f_L^{-1}(t) \cap (D\backslash X_k) \in \mathcal{Z}_{n,rel}(D\backslash X_k;\mathbb{Z}_2).$$

Now by concatenating $\{S_t\}_t$ from Lemma \ref{tong} with $\{T_t\}_{t\in[0,L]}$, we get a $1$-sweepout $\{U_s\}_{s\in[0,L+1]}$ of the disjoint union of $X_k\backslash \mathfrak{i}_k(\Omega) \sqcup D\backslash X_k$ satisfying
$$\sup_{t}\Vol_n(U_t) \leq \mathcal{A}+\bar{\varepsilon}.$$ 
Note that $U_s$ is not in general a cycle in $\mathcal{Z}_n(\mathcal{C}(X_k);\mathbb{Z}_2)$ because the relative cycles $S_t$, $T_t$ may have boundaries, so $\{U_s\}$ is not a $1$-sweepout of the union $X_k\backslash \mathfrak{i}_k(\Omega) \cup D\backslash X_k$ considered as a subset of $\mathcal{C}(X_k)$. Next we will see how to form a genuine sweepout made of cycles out of $\{U_s\}$.

Again by the proof of \cite[Theorem 5.1, Claim 5.6]{MaNeinfinity}, we can construct from $\{U_s\}$ for each $p\geq 1$ a $p$-sweepouts $\Phi_p$ (with domain $\mathbb{R}P^p$) of the disjoint union $X_k\backslash \mathfrak{i}_k(\Omega) \sqcup D\backslash X_k$ such that 
$$\sup_{x\in\mathbb{R}P^p} \mathbf{M}(\Phi_p(x),g_k) \leq p.(\mathcal{A}+\bar{\varepsilon}).$$
On the other hand, since a $p$-sweepout of $\Omega \subset M$ lifts to a $p$-sweepout of $\mathfrak{i}_k(\Omega) \subset \mathcal{C}(X_k)$, \cite[Theorem 5.1]{MaNeinfinity}, there is a sequence of sweepouts $\Psi_p$ of $\mathfrak{i}_k(\Omega)$ (with domain $\mathbb{R}P^p$) so that for a constant $\underline{C}_0=\underline{C}_0(\Omega, g)$ (independent of $k$), 
$$\forall p\geq 1 \sup_{x\in\mathbb{R}P^p} \mathbf{M}(\Psi_p(x),g_k) \leq  \underline{C}_0p^{\frac{1}{n+1}}.$$
As explained in the proof of Theorem 9 in \cite{AntoineYau}, we can glue the $p$-sweepouts $\Phi_p$ and $\Psi_p$ parametrized by $\mathbb{R}P^p$ using \cite{LioMaNe} and get a new $p$-sweepout $\hat{\Phi}_p$ of $D \subset \mathcal{C}(X_k) $ with domain $\mathbb{R}P^p$ such that:
\begin{align*}
\sup_{x\in\mathbb{R}P^p} \mathbf{M}(\hat{\Phi}_p(x),g_k) \leq &\quad p.(\mathcal{A}+\bar{\varepsilon})+ \underline{C}_0 p^{\frac{1}{n+1}} + \Vol_n(\partial (\mathfrak{i}_k(\Omega)), g_k) \\
&+\Vol_n(\partial(X_k\backslash \mathfrak{i}_k(\Omega)), g_k)+\Vol_n(\partial X_k,g_k).
\end{align*}
As $\Omega$ is fixed and $\partial X_k$ has $n$-volume less than twice that of  $\Gamma$ by monotonicity of the level set flow, the last three $n$-volumes above are bounded by a constant $\underline{C}_1$ independent of $k$.

Taking $D$ arbitrarily large, we get for all $k$ large $\tilde{\omega}_p(g_k)\leq p.(\mathcal{A}+\bar{\varepsilon})+\underline{C}_0p^{\frac{1}{n+1}} +\underline{C}_1$ and thus
$$\tilde{\omega}_p(g)\leq p.(\mathcal{A}+\bar{\varepsilon})+\underline{C}_0p^{\frac{1}{n+1}} +\underline{C}_1,$$
which implies that $\limsup_{p\to \infty} \frac{\tilde{\omega}_p(g)}{p}\leq \mathcal{A} +\bar{\varepsilon}$. Since $\bar{\varepsilon}$ was arbitrarily small, we conclude with (\ref{fleur}) that the proposition is true.
\end{proof}

Finally we have the following lemma. Remember that in this section we assume that $g$ satisfies Condition [M].
\begin{lemme} \label{pareil}
Suppose that any closed  embedded minimal hypersurface in $X$ intersecting $\Gamma$ is a saddle point minimal hypersurface.
Then for all closed embedded minimal hypersurface $\Sigma\subset \interior(X)$ intersecting $\Gamma$, 
$$\Vol_n(\Sigma) > \mathcal{A}(\Gamma,g) \text{  if $\Sigma$ is 2-sided},$$
$$2\Vol_n(\Sigma) > \mathcal{A}(\Gamma,g) \text{  if $\Sigma$ is 1-sided}.$$
\end{lemme}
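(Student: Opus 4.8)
The plan is to deduce the lemma from the following sharper statement about an arbitrary saddle point $\Theta$: if $\Theta$ is a $2$-sided saddle point minimal hypersurface in $(M,g)$ and $Y$ is a manifold generated by $\Theta$ in the sense of this subsection, then every connected component of $\partial Y$ has $n$-volume strictly less than $\Vol_n(\Theta)$ (and less than $2\Vol_n(\Theta)$ if $\Theta$ is $1$-sided). Applying this to $\Theta=\Gamma$ gives $\mathcal A(\Gamma,g)<\Vol_n(\Gamma)$ and settles the case $\Sigma=\Gamma$. For $\Sigma\neq\Gamma$ I will build a manifold $\hat X$ generated by $\Sigma$, define $\mathcal A(\Sigma,g)$ from $\hat X$ as $\mathcal A(\Gamma,g)$ is defined from $X$, use the sharper statement for $\Theta=\Sigma$ to get $\mathcal A(\Sigma,g)<\Vol_n(\Sigma)$, and then prove $\mathcal A(\Gamma,g)\leq\mathcal A(\Sigma,g)$.

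To prove the sharper statement, treat $\Sigma$ as $2$-sided (the $1$-sided case is analogous: $\partial N_\Sigma$ is then connected, of $n$-volume $<2\Vol_n(\Sigma)$ since the double cover of $\Sigma$, being a saddle point, is unstable and one can shrink $\partial N_\Sigma$ in the unstable direction). Since $g$ satisfies Condition [M] and $\Sigma$ is a saddle point, $\Sigma$ is unstable or degenerate stable of Type III, so exactly as at the beginning of this subsection it has a neighborhood $N_\Sigma$ with a diffeomorphism $\phi_\Sigma:\Sigma\times(-\delta,\delta')\to N_\Sigma$, $\phi_\Sigma(\Sigma\times\{0\})=\Sigma$, whose leaves have mean curvature vector vanishing or pointing away from $\Sigma$, and whose boundary leaves $\phi_\Sigma(\Sigma\times\{-\delta\})$, $\phi_\Sigma(\Sigma\times\{\delta'\})$ have nowhere vanishing mean curvature. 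From $\frac{d}{ds}\Vol_n(\phi_\Sigma(\Sigma\times\{s\}))=-\int\langle\partial_s\phi_\Sigma,\overrightarrow{H}\rangle$ and the fact that $\overrightarrow{H}$ points away from $\Sigma$, the $n$-volume of the leaves is non-increasing in $|s|$ and strictly decreasing wherever $\overrightarrow{H}\neq 0$, so both components of $\partial N_\Sigma$ have $n$-volume $<\Vol_n(\Sigma)$. Now run the construction of this subsection with $(\Sigma,M\setminus N_\Sigma)$ in place of $(\Gamma,K_0)$ to obtain $\hat X$, a manifold with compact stable minimal boundary of finite total $n$-volume (using thickness at infinity). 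Because the level set flow employed is mean convex, the regions $K_t$ are nested and the $n$-volume of each connected component of the moving boundary is non-increasing, no boundary components are merged, and the two sides of the minimal barrier $\Sigma$ never reconnect. Hence every component of $\partial\hat X$ descends from a single component of $\partial N_\Sigma$ and therefore has $n$-volume $<\Vol_n(\Sigma)$, which gives $\mathcal A(\Sigma,g)<\Vol_n(\Sigma)$.

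It remains to show $\mathcal A(\Gamma,g)\leq\mathcal A(\Sigma,g)$ when $\Sigma\neq\Gamma$; together with the previous paragraph this finishes the proof. Here $\partial X$ is a union of stable minimal hypersurfaces and serves as a barrier. Since $N_\Sigma\subset\interior(X)$, the core removed to form $X$ lies inside $M\setminus N_\Sigma$, so the level set flow generating $\hat X$ can reach that core only after passing $\partial X$. Conversely, since $\Sigma$ meets $N_\Gamma$ and, by hypothesis, no stable closed minimal hypersurface meets $\Gamma$ inside $\interior(X)$, this flow cannot stabilize at any stable minimal hypersurface lying strictly between $N_\Sigma$ and $\partial X$: the only stable minimal hypersurfaces at which a mean-convex level set flow issuing from a neighborhood of $N_\Gamma$ can stabilize are the components of $\partial X$ themselves — this is precisely the mechanism used in the regeneration step in the proof of Theorem \ref{b}. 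Thus the flow generating $\hat X$ stabilizes exactly at $\partial X$, so $\hat X=X$ as Riemannian manifolds with boundary, whence $\mathcal A(\Gamma,g)=\mathcal A(\Sigma,g)<\Vol_n(\Sigma)$, which is the $2$-sided assertion; the $1$-sided assertion $2\Vol_n(\Sigma)>\mathcal A(\Gamma,g)$ follows identically.

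The main obstacle is the comparison in the last paragraph: showing that the generated manifold is insensitive to which saddle point meeting $N_\Gamma$ is used. This rests on the regularity of mean-convex level set flow recalled in Appendix C — a convergent such flow limits onto a \emph{stable} minimal hypersurface — together with the standing hypothesis, which rules out any candidate obstruction meeting $\Gamma$. The remaining ingredients (the first-variation computation bounding the $n$-volume of $\partial N_\Sigma$, and the non-increase of component areas along the flow) are routine.
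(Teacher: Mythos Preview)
Your overall strategy --- build a mean-concave neighborhood $N_\Sigma$ with $\Vol_n(\partial N_\Sigma)<\Vol_n(\Sigma)$, then compare this to $\mathcal{A}(\Gamma,g)$ via a flow argument --- is the same as the paper's. The first paragraph (bounding the area of $\partial N_\Sigma$) is fine. The gap is in the comparison step, where you try to prove the overly strong claim $\hat X=X$.

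First, your key assertion ``no stable closed minimal hypersurface meets $\Gamma$ inside $\interior(X)$'' is false as stated: saddle points of Type III are degenerate \emph{stable}. What is true, and what the flow argument actually needs, is that no minimal hypersurface meeting $\Gamma$ is \emph{locally area minimizing} on either side (since saddle points are unstable or Type III). Second, and more seriously, you do not address obstructions \emph{not} meeting $\Gamma$. The flow you launch from $\partial N_\Sigma$ is not the flow from $\partial N_\Gamma$; the latter defines $X$ tautologically, but the former can in principle stall at the degenerate Type~I leaves $\phi(\Gamma\times\{s\})$ sitting inside $N_\Gamma\setminus N_\Sigma$ (these are the only minimal hypersurfaces in $\interior(X)$ disjoint from $\Gamma$, by the maximum principle). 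One must argue that such leaves can be passed through by hand, as is done in the proof of Theorem~\ref{b}. Your reference to ``the regeneration step'' gestures at this but does not carry it out, and your sentence literally identifies the $\Sigma$-flow with a flow ``issuing from a neighborhood of $N_\Gamma$'', which it is not.

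The paper avoids the claim $\hat X=X$ entirely. It works inside the compact approximations $(X_k,g_k)$: for $k$ large, $N_\Sigma\subset\interior(X_k)$, minimal hypersurfaces in $(X_k,g_k)$ meeting $\Gamma$ with area at most $\max\{\Vol_n(\Sigma_1),\Vol_n(\Sigma_2)\}$ are saddle points (by a compactness argument using thickness at infinity), and those not meeting $\Gamma$ are Type~I leaves. With these two facts in hand, the argument of \cite[Lemma 13]{AntoineYau} applied to $(X_k,g_k)$ gives directly $\max\{\Vol_n(\Sigma_1),\Vol_n(\Sigma_2)\}>\mathcal{A}_k$, and passing $k\to\infty$ finishes. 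This is both shorter and cleaner than proving $\hat X=X$.
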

\begin{proof}
Let us check the lemma when $\Sigma$ is 2-sided, the other case being similar. It is a saddle point by assumption, so there is a mean concave neighborhood $N_\Sigma$ of $\Sigma$ foliated by hypersurfaces with mean curvature vector pointing away from $\Sigma$ when non-zero. The boundary $\partial N_\Sigma$ has two connected components $\Sigma_1$, $\Sigma_2$ whose $n$-volume are both strictly less than $\Vol_n(\Sigma)$. For $k$ large, $N_\Sigma\subset \interior(X_k)$ since $g_k=g$ on larger and larger balls. For $k$ large, minimal hypersurfaces in $(X_k,g_k)$ with $n$-volume at most $\max\{\Vol_n(\Sigma_1),\Vol_n(\Sigma_2)\}$ and intersecting $\Gamma$ are saddle point minimal hypersurfaces: to see this suppose there is a sequence of minimal hypersurfaces $S_{k_i}\subset (X_{k_i},g_{k_i})$ either strictly stable or degenerate stable of Type I or II, intersecting $\Gamma$ and of $n$-volume bounded uniformly. Then a subsequence converges, which is closed by thickness at infinity and it means that for $k_i$ large, $S_{k_i}$ is minimal for $g$, intersects $\Gamma$ but is not a saddle point: contradiction. Now since moreover minimal hypersurfaces in $(\interior(X_k),g_k)$ not intersecting $\Gamma$ are not locally area minimizing but degenerate stable of Type I, by arguments in the proof of \cite[Lemma 13]{AntoineYau} applied to $(X_k,g_k)$, 
$$\max\{\Vol_n(\Sigma_1),\Vol_n(\Sigma_2)\}> \mathcal{A}_k .$$
Since $\mathcal{A}_k$ converges to $\mathcal{A}$, we get the lemma.
\end{proof}

\subsection{Zero-infinity dichotomy for the space of cycles in manifolds thick at infinity} 

We now state our main theorem, which can be thought of as an extension of Yau's conjecture to non-compact manifolds thick at infinity: 
\begin{theo}[Zero-infinity dichotomy]  \label{dicho}
Let $(M,g)$ be an $(n+1)$-dimensional complete manifold with $2\leq n\leq 6$, thick at infinity. Then the following dichotomy holds true:
\begin{enumerate}
\item either $(M,g)$ contains infinitely many saddle point minimal hypersurfaces,
\item or there is none; in that case for any compact domain $B$, there is an embedded closed globally area minimizing hypersurface $\Sigma_B \subset (M,g)$ (maybe empty) such that $B\backslash \Sigma_B$ has a singular weakly mean convex foliation.

\end{enumerate}
\end{theo}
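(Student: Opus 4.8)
The statement splits into the two alternatives, and Theorem \ref{b} already does most of the work in the ``none'' alternative. The plan is: first dispose of the case where $(M,g)$ has no saddle point minimal hypersurface; then, assuming there is at least one, derive a contradiction from the hypothesis that there are only finitely many, using the manifold $X$ generated by a saddle point together with the width estimates of Section \ref{Appendix D}.

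\textbf{The easy alternative.} Suppose $(M,g)$ contains no saddle point minimal hypersurface. Let $B$ be any compact domain. Applying Theorem \ref{b} to $B$, alternative (1) there is impossible (it would produce a saddle point meeting $B$), so alternative (2) holds: there is an embedded closed locally area minimizing hypersurface $\Sigma_B\subset(M,g)$ (possibly empty) such that $B\backslash\Sigma_B$ carries a singular weakly mean convex foliation, with the regularity described before Theorem \ref{b}. This is exactly conclusion (2) of the theorem (one may, if desired, replace $\Sigma_B$ by an actual $n$-volume minimizer in its $\mathbb{Z}_2$-homology class to upgrade ``locally'' to ``globally'' area minimizing). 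Since $B$ was arbitrary, we are done in this case.

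\textbf{The hard alternative.} Now suppose $(M,g)$ contains at least one saddle point minimal hypersurface; I will show it contains infinitely many. Assume, towards a contradiction, that there are only finitely many, say $\Gamma=\Gamma_1,\dots,\Gamma_N$, and (Appendix A) that Condition [M] holds. Build the manifold $X$ \emph{generated} by $\Gamma$ and $K_0:=M\backslash N_\Gamma$ as in Section \ref{Appendix D}: since $(M,g)$ is thick at infinity, every component of $\partial X$ is a compact minimal hypersurface, locally area minimizing in $X$, hence (by [M]) strictly stable or degenerate stable of Type II, with finite total $n$-volume; set $\mathcal{A}=\mathcal{A}(\Gamma,g)$. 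We may assume $\mathcal{A}>0$: if $\mathcal{A}=0$ then $\partial X=\varnothing$, $X=M$ is closed, and the localization argument of \cite{AntoineYau} applies directly (alternatively the estimates below go through with $\mathcal{A}_k\to 0$). The key structural input is that the hypothesis of Lemma \ref{pareil} holds, i.e.\ every closed embedded minimal hypersurface in $\interior(X)$ meeting $\Gamma$ is a saddle point: by the construction of $X$ the level set flow from $K_0$ sweeps $\interior(X)$ out to $\partial X$, so a minimal hypersurface met by this flow is crossed by strictly mean convex leaves and is therefore unstable or degenerate stable of Type III, while a strictly stable or Type II one would be a component of $\partial X$ and not lie in $\interior(X)$; degenerate stable Type I leaves are handled by prolonging the foliation through them exactly as in the proof of Theorem \ref{b} (refining the construction of $X$). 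Granting this, Lemma \ref{pareil} gives: any closed embedded minimal hypersurface $\Sigma\subset\interior(X)$ meeting $\Gamma$ satisfies $\Vol_n(\Sigma)>\mathcal{A}$ if $2$-sided and $2\Vol_n(\Sigma)>\mathcal{A}$ if $1$-sided.

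\textbf{Contradiction via the widths.} By Proposition \ref{etouiminmax}, for each $p$ we have $\tilde\omega_p(X,g)=\sum_{i=1}^{J_p} m_{p,i}\Vol_n(\Gamma_i^{(p)})$ with the $\Gamma_i^{(p)}\subset\interior(X)$ disjoint connected closed embedded minimal, each meeting $M\backslash K_0=N_\Gamma$ and hence, by the maximum principle against the mean convex foliation of $N_\Gamma$, meeting $\Gamma$; and $m_{p,i}$ is even when $\Gamma_i^{(p)}$ is $1$-sided. By the previous paragraph each $\Gamma_i^{(p)}$ is one of the finitely many $\Gamma_1,\dots,\Gamma_N$, and each term $m_{p,i}\Vol_n(\Gamma_i^{(p)})$ exceeds $\mathcal{A}$. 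Writing $w_j$ for the effective volume of $\Gamma_j$ ($\Vol_n$ if $2$-sided, $2\Vol_n$ if $1$-sided), we get $\tilde\omega_p(g)=\sum_{j=1}^N a_{p,j}w_j$ with $a_{p,j}\in\mathbb{Z}_{\geq 0}$, $\sum_j a_{p,j}\geq 1$, and $w_j>\mathcal{A}$ for all $j$; hence $\tilde\omega_p(g)\geq(\mathcal{A}+\delta)\sum_j a_{p,j}$ with $\delta:=\min_j(w_j-\mathcal{A})>0$. On the other hand Proposition \ref{flore} gives $\tilde\omega_{p+1}(g)-\tilde\omega_p(g)\geq\mathcal{A}$ (so the sequence $\tilde\omega_p(g)$ is strictly increasing), $\tilde\omega_p(g)\geq p\mathcal{A}$, and $\tilde\omega_p(g)/p\to\mathcal{A}$. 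The conclusion is that $\sum_j a_{p,j}$ cannot stay below $(1-o(1))p$: a counting argument in the finitely generated monoid $\langle w_1,\dots,w_N\rangle$, using the strict monotonicity of $\tilde\omega_p(g)$ with jumps $\geq\mathcal{A}$ and the Lusternik--Schnirelmann mechanism (equal consecutive widths would already give infinitely many minimal hypersurfaces), forces $\sum_j a_{p,j}\geq(1-o(1))p$ for large $p$, and therefore $\liminf_p \tilde\omega_p(g)/p\geq\mathcal{A}+\delta>\mathcal{A}$, contradicting $\tilde\omega_p(g)/p\to\mathcal{A}$. Hence there are infinitely many saddle point minimal hypersurfaces.

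\textbf{Main obstacle.} The delicate step is the last one: turning ``finitely many admissible hypersurfaces, all with effective volume strictly above $\mathcal{A}$'' into near-linear growth of $\sum_j a_{p,j}$, i.e.\ into $\liminf\tilde\omega_p(g)/p>\mathcal{A}$. This is the non-compact analogue of how sublinearity of the widths (or the Weyl law) is used in the compact case \cite{MaNeinfinity}, and it requires genuine control on the min-max varifolds realizing $\tilde\omega_p$ (for instance that the multiplicities $a_{p,j}$ can be taken non-decreasing in $p$, or a quantitative gap from Lusternik--Schnirelmann), rather than merely their existence. A secondary technical point, needed to even invoke Lemma \ref{pareil}, is verifying that no strictly stable or Type I degenerate stable minimal hypersurface meets $\Gamma$ inside $\interior(X)$, which is where the level-set-flow construction of $X$ must be tracked carefully.
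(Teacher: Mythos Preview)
Your overall architecture matches the paper's: the ``easy'' alternative is Theorem~\ref{b}, and the ``hard'' alternative is Theorem~\ref{oneinfinity}. The counting argument you sketch at the end is also the right one (in the paper it is the arithmetic lemma \cite[Lemma 14]{AntoineYau} for $\mathcal{A}>0$ and the counting of \cite[Section 7]{MaNeinfinity} for $\mathcal{A}=0$). However there is a genuine gap in how you reach the hypothesis of Lemma~\ref{pareil}.

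\textbf{The gap.} You claim that every closed embedded minimal hypersurface in $\interior(X)$ meeting $\Gamma$ is a saddle point, arguing that ``the level set flow from $K_0$ sweeps $\interior(X)$ out to $\partial X$, so a minimal hypersurface met by this flow is crossed by strictly mean convex leaves and is therefore unstable or degenerate stable of Type III''. This does not work: being crossed by a family of mean convex hypersurfaces says nothing about stability. The avoidance principle only controls \emph{disjoint} initial data, and here $\partial K_0=\partial N_\Gamma$ already intersects any $S$ meeting $\Gamma$. There is no obstruction to a strictly stable (or Type I/II) closed minimal hypersurface $S\subset\interior(X)$ meeting $\Gamma$. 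The paper does \emph{not} rule this out a priori; instead it treats it as a separate case and uses it to manufacture a \emph{new} saddle point: one cuts $X$ along $S$ (or along a nearby minimizer if $S$ is Type I non-separating), obtains a manifold $Y$ with locally area-minimizing compact boundary, and applies Theorem~\ref{b} (see Remark~\ref{itsok}) to a domain in $Y$ containing the lift of $\Gamma$. Alternative (2) of Theorem~\ref{b} is ruled out by the maximum principle against the mean concave foliation of $N_\Gamma$, so a saddle point $\Gamma_1\neq\Gamma$ appears. Iterating, either one produces infinitely many saddle points directly, or the process terminates at a stage where the hypothesis of Lemma~\ref{pareil} genuinely holds. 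Only then does the width/counting machinery apply. Your proof skips this iteration entirely.

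\textbf{Two smaller points.} First, ``$\mathcal{A}=0\Rightarrow X$ is closed'' is false: $X$ can be non-compact with empty boundary (the flow runs off to infinity), and the paper handles this as its own subcase. Second, in the easy alternative the statement asserts $\Sigma_B$ is \emph{area minimizing}, not just locally; the paper spends a short paragraph upgrading this (using Theorem~\ref{b} applied to a larger domain and the fact that non-strictly-mean-convex leaves must be Type I), so your parenthetical ``if desired'' is actually required and not automatic.
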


By inspecting the proof of the above theorem, we can check that it also holds more generally for manifolds $M^{n+1}$ ($2\leq n\leq 6$) thick at infinity with minimal boundary, such that each component of $\partial M$ is closed. In particular, if $M$ is compact with minimal boundary, then the following dichotomy holds: either there are infinitely many saddle point minimal hypersurfaces in the interior of $M$, or there is a closed embedded area minimizing minimal hypersurface $\Sigma\subset M$ such that $M\backslash \Sigma$ has a singular weakly mean convex foliation. 

As briefly mentionned in the introduction, there is an interpretation of the zero-infinity dichotomy in Morse theoric terms. Let $\mathcal{Z}_n(M;\mathbb{Z}_2)$ be the space of integral cycles with bounded support in $M$, endowed with the flat topology. It is a space of generalized closed hypersurfaces and the mass functional $\mathbf{M}$ extends the notion of $n$-volume for smooth hypersurfaces. The dichotomy then says that either $\mathcal{Z}_n(M;\mathbb{Z}_2)$ contains infinitely many ``non-trivial critical points'' of $\mathbf{M}$ whose supports are smooth minimal hypersurfaces, or $\mathcal{Z}_n(M;\mathbb{Z}_2)$ is locally simple: the only ``critical points'' are supported on smooth stable minimal hypersurfaces and for each bounded region $B$ of $M$, the set of elements of $\mathcal{Z}_n(M;\mathbb{Z}_2)$ with support inside $B$ can be contracted to $\mathbf{M}$-minimizing elements of $\mathcal{Z}_n(M;\mathbb{Z}_2)$ by a retraction flow which is $\mathbf{M}$-nonincreasing and continuous in the flat topology.

The proof of Theorem \ref{dicho} will follow from Theorem \ref{b} and the following:

\begin{theo} \label{oneinfinity}
Let $(M,g)$ be an $(n+1)$-dimensional complete manifold with $2\leq n\leq 6$, thick at infinity. If there exists a saddle point minimal hypersurface, then there exists infinitely many.
\end{theo}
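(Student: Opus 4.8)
The plan is to argue by contradiction. Suppose $(M,g)$ is thick at infinity, contains at least one saddle point minimal hypersurface, but only finitely many of them, say $\Gamma_1,\dots,\Gamma_N$ with $N\geq 1$. Using the reductions of Appendix A, I would first arrange that $g$ satisfies Condition [M], so that every connected finite volume complete minimal hypersurface is closed, each degenerate stable one is of Type I, II or III, and in particular each $\Gamma_j$ is either unstable or of Type III while each stable minimal hypersurface is strictly stable or degenerate stable of Type I or II. (One has to check that this reduction does not change the finiteness of the set of saddle points; this is a routine but necessary point.)

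Fix $\Gamma:=\Gamma_1$ and run the construction of Subsection \ref{Appendix D}: it produces the manifold $X$ generated by $\Gamma$ and $K_0=M\setminus N_\Gamma$, with finitely many closed minimal stable boundary components (strictly stable or degenerate stable of Type II), the number $\mathcal{A}=\mathcal{A}(\Gamma,g)>0$, and the widths $\tilde\omega_p(g):=\tilde\omega_p(X,g)$. By Proposition \ref{flore}, $\tilde\omega_{p+1}(g)-\tilde\omega_p(g)\geq\mathcal{A}$ for every $p$ and $\tilde\omega_p(g)/p\to\mathcal{A}$, so $\tilde\omega_p(g)\to\infty$. By Proposition \ref{etouiminmax}, for each $p$ we may write $\tilde\omega_p(g)=\sum_{i=1}^{J}m_{p,i}\Vol_n(\Gamma^{(p)}_i)$ with disjoint connected closed embedded minimal hypersurfaces $\Gamma^{(p)}_i\subset\interior(X)$, integers $m_{p,i}$ (even when $\Gamma^{(p)}_i$ is $1$-sided), and the varifold $\sum_i m_{p,i}|\Gamma^{(p)}_i|$ of Morse index at most $p$; moreover each $\Gamma^{(p)}_i$ meets $N_\Gamma$. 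A maximum principle argument against the foliation of $N_\Gamma\setminus\Gamma$ by hypersurfaces whose mean curvature vector points away from $\Gamma$ (when nonzero) then shows that each $\Gamma^{(p)}_i$ must in fact meet $\Gamma$.

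I would next distinguish two cases. If some closed embedded minimal hypersurface $\Sigma\subset\interior(X)$ meeting $\Gamma$ is stable, then $\Sigma$ is strictly stable or degenerate stable of Type I or II, and I would re-run the core/level-set-flow construction of Theorem \ref{a} and Subsection \ref{Appendix D} using $\Sigma$ as an additional stable barrier; feeding the resulting compact manifold with locally area minimizing boundary into Theorem \ref{sadddle} produces a saddle point in a genuinely new location, and iterating yields infinitely many. If instead every closed embedded minimal hypersurface of $\interior(X)$ meeting $\Gamma$ is a saddle point, then Lemma \ref{pareil} applies: each such hypersurface lies in the finite list $\{\Gamma_1,\dots,\Gamma_N\}$ and satisfies $\Vol_n>\mathcal{A}$, resp. $2\Vol_n>\mathcal{A}$ if $1$-sided, so there is $\delta>0$ with $m_{p,i}\Vol_n(\Gamma^{(p)}_i)\geq\mathcal{A}+\delta$ for all $p,i$. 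Hence every $\tilde\omega_p(g)$ lies in the discrete additive set $\mathcal{D}\subset\mathbb{R}$ generated by the finitely many positive reals $\Vol_n(\Gamma_1),\dots,\Vol_n(\Gamma_N)$, and every $\mathcal{D}$-representation of $\tilde\omega_p(g)$ uses only summands of size at least $\mathcal{A}+\delta$.

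The contradiction should then come from confronting this rigidity with Proposition \ref{flore}. Since $\tilde\omega_{p+1}(g)-\tilde\omega_p(g)\geq\mathcal{A}$ while $\sum_{q<p}(\tilde\omega_{q+1}(g)-\tilde\omega_q(g)-\mathcal{A})=\tilde\omega_p(g)-\tilde\omega_1(g)-(p-1)\mathcal{A}=o(p)$, a density-one set of consecutive widths satisfy $\tilde\omega_{p+1}(g)-\tilde\omega_p(g)<\mathcal{A}+\delta$; combining this with the index bound $\sum_i m_{p,i}\leq p$ (valid because $\mathrm{index}(m\,|\Gamma_j|)\geq m$ for a saddle point $\Gamma_j$), with the lower bound $\tilde\omega_p(g)\geq p\mathcal{A}$, and with the observation that a transition in $\mathcal{D}$ between two configurations built from pieces of size $\geq\mathcal{A}+\delta$ that increases the total by less than $\mathcal{A}+\delta$ forces a large cancellation among the $\Vol_n(\Gamma_j)$, one forces $\mathcal{A}$ to be represented, for infinitely many $p$, by a bounded-height integer combination of the $\Vol_n(\Gamma_j)$ — which is incompatible with $\tilde\omega_p(g)\sim p\mathcal{A}$ and the strictly positive gaps. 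Making this final counting argument airtight — in particular handling the case of incommensurable saddle-point volumes by a careful joint use of the Morse index bound and the discreteness of $\mathcal{D}$, and fully treating the sub-case of a stable minimal hypersurface through $\Gamma$ — is the step I expect to be the main obstacle; the rest is an application of the machinery already developed (Propositions \ref{flore}, \ref{etouiminmax}, Lemma \ref{pareil}, Theorem \ref{sadddle}, and the generation construction of Subsection \ref{Appendix D}).
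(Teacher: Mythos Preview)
Your overall architecture matches the paper's: reduce to Condition [M], build $X$ from a saddle point $\Gamma$, split into the case where some non-saddle minimal hypersurface meets $\Gamma$ (and iterate) versus the case where every such hypersurface is a saddle point (and run a counting argument against Proposition \ref{flore} and Lemma \ref{pareil}). But several steps do not go through as written.

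First, you assert $\mathcal{A}(\Gamma,g)>0$ without justification; the paper treats $X$ closed, $X$ non-compact with $\mathcal{A}=0$, and $\mathcal{A}>0$ as three separate sub-cases, the first via \cite{MaNeinfinity} and the second via the counting argument of \cite[Section 7]{MaNeinfinity} after proving the widths are strictly increasing. Second, your maximum principle forcing each $\Gamma^{(p)}_i$ to meet $\Gamma$ (not just $N_\Gamma$) fails when $\Gamma$ is degenerate stable of Type III, since leaves of $N_\Gamma\setminus\Gamma$ may themselves be minimal; the paper circumvents this with an approximation $h^{(q)}\to g$ (Lemma \ref{adeformation}) making those leaves non-minimal, and passes to the limit. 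Third, and most seriously, your index bound $\sum_i m_{p,i}\le p$ is wrong: the Morse index of $m|\Gamma_j|$ equals the index of the support $\Gamma_j$, not $m$ times it, and a Type III saddle point has index zero. The paper never uses such a bound. Instead, in your Case B the paper first derives a Frankel property for minimal hypersurfaces in $\interior(X)$ meeting $\Gamma$; this forces each $\tilde\omega_p$ to be realized by a \emph{single} connected hypersurface with multiplicity, $\tilde\omega_p=m_p\Vol_n(\Gamma^{(p)})$. The contradiction (for $\mathcal{A}>0$) then comes directly from the arithmetic lemma \cite[Lemma 14]{AntoineYau}, whose hypotheses are exactly $\tilde\omega_{p+1}-\tilde\omega_p\ge\mathcal{A}$, $\tilde\omega_p/p\to\mathcal{A}$, and the strict inequalities of Lemma \ref{pareil}; it is not the density/index heuristic you sketch.
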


\begin{proof}
We suppose that $M$ is thick at infinity. Unless specified, minimal hypersurfaces are closed embedded. We assume that the metric $g$ satisfies Condition [M], otherwise there are already infinitely many saddle points (Appendix A). 

Let $\Gamma$ be a saddle point minimal hypersurface in $(M,g)$. We can suppose $\Gamma$ to be connected. We are now in the situation treated by Subsection \ref{Appendix D}. We will explain the case where $\Gamma$ is 2-sided for simplicity, but the case where it is 1-sided is completely analogous. $\Gamma$ has to be either unstable, or degenerate stable of Type III, and so we can find a neighborhood $N_\Gamma$ of $\Gamma$ and a diffeomorphism $\phi:\Gamma\times(-\delta_1,\delta'_1) \to N_\Gamma$ such that $\phi(\Gamma\times\{0\}) = \Gamma$, the mean curvature of $\phi(\Gamma\times\{s\})$ is either vanishing or non-zero pointing away from $\Gamma$, and $\phi(\Gamma\times\{-\delta_1\})$, $\phi(\Gamma\times\{\delta'_1\})$ have non-zero mean curvature.

Let $K_0:=M\backslash N_\Gamma$. Let $X$ be generated by $\Gamma$, $K_0$ in $(M,g)$ as explained in Subsection \ref{Appendix D}. Recall that $X$ is the limit of compact manifolds $X_k$.

Suppose first that there is a closed connected minimal hypersurface $S$ embedded inside the interior of $X$, intersecting $\Gamma$, which is either strictly stable or degenerate stable of Type I or II. 
\begin{itemize}
\item 
If either $S$ separates $X$ into $2$-components, or $S$ is not degenerate stable of Type I, then let $Y$ denotes the metric completion of $X\backslash S$. 
\item Otherwise $S$ does not separate and is degenerate stable of Type I (it is in particular 2-sided, see Appendix A). If $X'$ is the metric completion of $X\backslash S$, two boundary components $T_1,T_2$ of same $n$-volume come from $S$. One (say $T_1$) is locally area minimizing inside $X'$, the other component $T_2$ is not. We minimize the $n$-volume of $T_2$ inside $X'$ in its homology class and get a 2-sided locally area minimizing minimal hypersurface $W$ that separates $T_1$ from $T_2$; $W$ is not necessarily compact but at least it is locally compact. Let $Y$ be defined as the metric completion of the component of $X'\backslash W$ containing $T_1$. 
\end{itemize}

In any case, $Y$ has locally area minimizing boundary, whose components are closed. Let $\Gamma^*\subset Y$ denote the preimage of $\Gamma$ under the natural map $Y\to M$.
We apply Theorem \ref{itsok} to a domain $B\subset Y$ containing $\Gamma^*$ and the connected components of $\partial Y$ touching $\Gamma^*$. We claim that only case (1) can happen: if case (2) was true, there would be a locally area minimizing minimal hypersurface $\Sigma_B$ and a singular weakly mean convex foliation $\{A_t\}_{t\geq 0}$ of $B\backslash \Sigma_B$. Since $\partial B\cap \partial Y$ is locally area minimizing, there is a first time $T$ when $\partial A_T$ touches $\Gamma^*\backslash \partial \Gamma^*$, which contradicts the maximum principle \cite{WhiteregMCF}.
Thus we deduce that there is a saddle point $\Gamma_1$ embedded in the interior of $Y$, different from $\Gamma^*$; in particular it projects to an embedded saddle point in $M$ still denoted by $\Gamma_1$ and different from $\Gamma$. 
We can reapply this discussion to the new saddle point $\Gamma_1 \subset Y$, get a manifold $X^{(1)}$ with compact locally area minimizing boundary (which is constructed from $\Gamma_1$ inside $Y$), and if $X^{(1)}$ contains in its interior a closed minimal hypersurface $S_1$ intersecting $\Gamma_1$, which is either strictly stable or degenerate stable of Type I or II, we get a new saddle point $\Gamma_2$ whose projection in $M$ is still denoted by $\Gamma_2$, is embedded and different from $\Gamma$ and $\Gamma_1$. If at each step, $X^{(j)}$ contains in its interior a minimal hypersurface either strictly stable or degenerate stable of Type I or II, then we produce an infinite sequence of distinct saddle points $\Gamma_j$ inside the original manifold $(M,g)$, and the conclusion of the theorem is verified.

For these reasons, we now only need to assume that if $X$, $\Gamma$ are as above, minimal hypersurfaces embedded in the interior of $X$ intersecting $\Gamma$ are either unstable or degenerate stable of Type III, that is, they are all saddle points. Note that this condition implies the 
\begin{equation}\label{frankel}
\begin{aligned}
& \text{ Frankel property inside $X$ for minimal hypersurfaces \textit{intersecting} $\Gamma$:}\\
&\text{ any two closed embedded minimal hypersurfaces in $\interior(X)$} \\
& \text{ intersecting $\Gamma$ have to intersect each other.}
\end{aligned}
\end{equation}
To check this, first notice by the maximum principle applied to $(X_k,g_k)$ for $k$ large, that any minimal hypersurface in $\interior(X)$ disjoint from $\Gamma$ is of the form $\phi(\Gamma\times\{s\})$ for some $s\in(-\delta_1,\delta_1')\backslash \{0\}$, so it is degenerate stable of Type I. Secondly, if there were two disjoint saddle point minimal hypersurfaces $S_1$, $S_2$ in $\interior(X)$ (intersecting $\Gamma$), then we could consider the following minimization procedure: since $\Gamma$ is connected, there is a curve embedded in $\interior(X)$ and joining $S_1$ to $S_2$, so any current homologous to $S_1$ contained in $X\backslash (S_1\cup S_2)$ intersects that curve; in other words, we can minimize the $n$-volume of $S_1$ inside $X\backslash (S_1\cup S_2)$ and get a minimal hypersurface $S\subset X$ with compact components, which is not fully contained in the boundary $\partial X$. A fortiori each component of $S$ is either strictly stable or degenerate stable of Type II. But we saw that it means $S$ intersects $\Gamma$, contradicting our assumption on $X$.

Before continuing, we explain why all connected components of the minimal hypersurfaces we will construct from now on will always intersect $\Gamma$. If $\Gamma$ is unstable then we could have chosen $\phi$ so that all the hypersurfaces $\phi(\Gamma\times \{s\})$ with $s\neq 0$ are non-minimal with mean curvature vector pointing away from $\Gamma$. In that case by the maximum principle any minimal hypersurface in $(\interior(X),g)$ intersects $\Gamma$ so we are done. However if $\Gamma$ is degenerate stable of Type III, it might happen that some $\phi(\Gamma\times \{s\})$ distinct from $\Gamma$ are also minimal. By Lemma \ref{adeformation} (2) there is a sequence of metrics $h^{(q)}$ converging to $g$ so that $\Gamma$ is still a saddle point and any $\phi(\Gamma\times \{s\})$ with $s\neq 0$ is non-minimal with mean curvature vector pointing away from $\Gamma$. Hence any minimal hypersurface in $(\interior(X),h^{(q)})$ intersects $\Gamma$ by the maximum principle. Applying min-max theory to $(X,h^{(q)})$ will produce for each integer $p$ a minimal hypersurface with integer multiplicities in $\interior(X)$, intersecting $\Gamma$, of Morse index and $n$-volume bounded independently of $q$  (see Proposition \ref{etouiminmax} and paragraph above). By \cite{Sharp}, we take a subsequence limit as $q\to \infty$ and get a minimal hypersurface with similar properties in $(\interior(X),g)$: in particular each component intersects $\Gamma$. Thus by (\ref{frankel}), the usual Frankel property, even though it may not be satisfied for all minimal hypersurfaces in $\interior(X)$, will be satisfied for all the minimal hypersurfaces we will consider until the end of this proof (since they are constructed by min-max). In what follows, we will implicitly assume the use of such a limiting procedure involving $h^{(q)}$.

For the end of the proof, the strategy to produce infinitely many saddle points is to use arguments of the solution of Yau's conjecture \cite{MaNeinfinity} \cite{AntoineYau}. In the simplest case where $X$ is actually closed compact then by the Frankel property in $X$ satisfied by minimal hypersurfaces intersecting $\Gamma$, \cite{MaNeinfinity} implies the existence of infinitely many minimal hypersurfaces in $X$ which are saddle point minimal hypersurfaces by assumption on $X$.

In Subsection \ref{Appendix D}, we defined some numbers $\tilde{\omega}_p(X,g)$ ($p\geq1$) and $\mathcal{A}(\Gamma,g)$. Suppose that $X$ is non-compact without boundary and $\mathcal{A}(\Gamma,g)=0$. By Proposition \ref{etouiminmax} and by (\ref{frankel}), the widths $\tilde{\omega}_p(X,g)$ are all finite, for each $p$ there is a connected closed embedded minimal hypersurface $\Gamma^{(p)}\subset (\interior(X),g)$ and a positive integer $m_{p}$ (which is even when $\Gamma^{(p)}$ is 1-sided) satisfying
\begin{equation} \label{reali}
\tilde{\omega}_p(X,g) = m_{p} \Vol_n(\Gamma^{(p)}).
\end{equation}
Moreover we have the following asymptotics (Proposition \ref{flore}):
\begin{equation} \label{subblinear}
\lim_{p\to\infty}\tilde{\omega}_p(X,g)/p = 0.
\end{equation}
We claim that this implies the existence of infinitely many saddle point minimal hypersurfaces in $\interior(X)$. Suppose by contradiction that there are only finitely many minimal hypersurfaces $\Sigma_1,...,\Sigma_L\subset \interior(X)$ intersecting $\Gamma$. Then $\tilde{\omega}_p(X,g)$ is a strictly increasing sequence. To explain this, consider the compact manifolds $(X_k,g_k)$ defined in Subsection \ref{Appendix D}; by \cite{Sharp} and thickness at infinity, for any fixed $p$ and for $k$ large enough depending on $p$, the min-max minimal hypersurface in $(X_k,g_k)$ whose $n$-volume with multiplicity is $\tilde{\omega}_p(X_{k}, g_{k})$ is actually a minimal hypersurface for the original metric $g$ (see paragraph right before Proposition \ref{etouiminmax}). 
Consequently by finiteness assumption on the number of minimal hypersurfaces intersecting $\Gamma$, we can find a subsequence $\{X_{k_i}\}$ of $\{X_k\}$, so that both sequences $\{\tilde{\omega}_p(X_{k_i}, g_{k_i})\}$ and $\{\tilde{\omega}_{p+1}(X_{k_i}, g_{k_i})\}$ stabilize to a constant, i.e.
$$\forall i\geq 1, \quad \tilde{\omega}_p(X_{k_i}, g_{k_i})= \tilde{\omega}_p(X,g), \quad \tilde{\omega}_{p+1}(X_{k_i}, g_{k_i})= \tilde{\omega}_{p+1}(X,g).$$
So if $\tilde{\omega}_p(X,g)=\tilde{\omega}_{p+1}(X,g)$, then in particular $\tilde{\omega}_p(X_{k_1}, g_{k_1})=\tilde{\omega}_{p+1}(X_{k_1}, g_{k_1})$ but this is not possible by \cite[Theorem 9 (1)]{AntoineYau} and since $\partial X_k\neq \varnothing$ ($X$ is non-compact).
Sublinearity (\ref{subblinear}) and the counting argument of \cite[Section 7]{MaNeinfinity} is then enough to get the existence of infinitely many minimal hypersurfaces (contradicting our assumption that there were only finitely many). The theorem is then proved in that case.

Suppose finally that $X$ is non-compact (with empty or non-empty boundary) and $\mathcal{A}(\Gamma,g)>0$. We appeal to the method of \cite{AntoineYau} as follows. In Subsection \ref{Appendix D}, we saw that $\tilde{\omega}_p(X,g)$ are finite numbers, and by Proposition \ref{etouiminmax} combined with (\ref{frankel}) for all $p$ there is a connected closed embedded minimal hypersurface $\Gamma^{(p)}\subset \interior(X)$ and a positive integer $m_{p}$ (which is even when $\Gamma^{(p)}$ is 1-sided) satisfying
$$\tilde{\omega}_p(X,g) = m_{p} \Vol_n(\Gamma^{(p)}).$$
Furthermore, by Proposition \ref{flore}, for all $p$
$$\tilde{\omega}_{p+1}(X,g) -\tilde{\omega}_p(X,g) \geq \mathcal{A}(\Gamma,g),$$
$$\lim_{p\to\infty}\tilde{\omega}_p(X,g)/p = \mathcal{A}(\Gamma,g).$$
Additionally, by Lemma \ref{pareil} for any closed embedded minimal hypersurface $\Sigma\subset \interior(X)$, 
$$\Vol_n(\Sigma) > \mathcal{A}(\Gamma,g) \text{  if $\Sigma$ is 2-sided},$$
$$2\Vol_n(\Sigma) > \mathcal{A}(\Gamma,g) \text{  if $\Sigma$ is 1-sided}.$$
Consequently the previous identities and estimates combined with the arithmetic lemma \cite[Lemma 14]{AntoineYau} imply that there are infinitely many closed embedded minimal hypersurfaces inside $\interior(X)$ intersecting $\Gamma$, which we recall have to be saddle points by assumption on $X$. This finishes the proof of the theorem.

\end{proof}

\begin{proof}[Proof of Theorem \ref{dicho}]

Theorem \ref{dicho} readily ensues from Theorem \ref{b} and Theorem \ref{oneinfinity}. The only non trivial point is that in item (2), for any compact domain $B$, the minimal hypersurface $\Sigma_B$ is globally area minimizing in its $\mathbb{Z}_2$-homology class in $(M,g)$ (instead of just locally area minimizing as in item (2) of Theorem \ref{b}). Assume that there are no saddle points, let $B$ be a bounded domain and $\Sigma_B$ the associated locally area minimizing minimal hypersurface given by Theorem \ref{b} (2). Suppose that there is another closed embedded hypersurface $\Gamma$ with $\Vol_n(\Gamma) \leq \Vol_n(\Sigma_B)$ and such that $\partial D = \Gamma \cup \Sigma_B$ for some compact domain $D$. Let $X$ be a compact domain containing $D$, by Theorem \ref{b} (2), there are a locally area minimizing hypersurface $\Sigma_X\subset (M,g)$, a manifold with boundary $Y\supset X$ with a metric $g_Y$ coinciding with $g$ on $X$, and a collection of weakly mean convex closed sets $\{K_t\}_{t\geq 0}$ such that $K_t$ foliates $X\backslash \Sigma_X$. We can minimize the $n$-volume of $\Gamma$ inside $K_0$ (which is mean convex) and obtain a locally area minimizing minimal hypersurface $\Gamma' \subset (K_0,g_Y)$, of $n$-volume at most $\Vol_n(\Sigma_B)$.  The non strictly mean convex components of $\partial K_t$ which intersect $\interior(X)$ have to be smooth minimal, degenerate stable of Type I. Hence $\Sigma_B \subset \Sigma_X$ and $\Gamma' \subset \Sigma_X$. But since these two hypersurfaces are $\mathbb{Z}_2$-homologous inside $K_0$, $\Sigma_B=\Gamma'$. Moreover a posteriori, we now know that $\Gamma$ was actually already locally area minimizing, hence $\Gamma\subset \Sigma_X$ and so again $\Gamma=\Sigma_B$. This proves that $\Sigma_B\subset (M,g)$ is the unique area minimizer in its $\mathbb{Z}_2$-homology class inside $(M,g)$.

\end{proof}

\section{Yau's conjecture for finite volume hyperbolic $3$-manifolds} \label{hyperbolicyau}

Combining methods from previous sections and \cite{ColHauMazRos,ColHauMazRoserratum}, we prove that Yau's conjecture holds true for finite volume hyperbolic $3$-manifolds.
\begin{theo} \label{vrille}
In any finite volume hyperbolic $3$-manifold $M$, there are infinitely many saddle point minimal surfaces. In particular, there are infinitely many closed embedded minimal surfaces.
\end{theo}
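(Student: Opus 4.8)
The plan is to reduce Theorem \ref{vrille} to a situation where the machinery of Theorem \ref{dicho} (equivalently Theorem \ref{oneinfinity}) applies, the only obstruction being that a finite volume hyperbolic $3$-manifold $M$ is typically \emph{not} thick at infinity: it has cusps, and inside each cusp there are noncompact complete minimal surfaces (e.g. the totally geodesic surfaces that descend from vertical planes, or the horospherical-type pieces). So the first step is to truncate. I would fix a maximal horoball neighborhood of each cusp and let $M_T$ be the compact manifold with boundary obtained by removing the (open) cusp regions beyond the horotori at distance $T$ from a fixed compact core; equivalently $M_T$ is bounded by a disjoint union of flat tori $\mathcal{T}_1,\dots,\mathcal{T}_c$. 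These horotori are \emph{not} minimal, but by the standard computation their mean curvature vector points \emph{into} $M_T$ (the cusp cross-sections shrink as one goes out the cusp), so $\partial M_T$ is strictly mean convex. One then modifies the metric in an arbitrarily thin collar of $\partial M_T$ to produce a compact manifold $(N_T,g_T)$ with minimal boundary, with $g_T = g$ away from that collar, and with $\partial N_T$ locally (indeed, by the mean convexity and a barrier/level-set-flow argument, globally) area minimizing inside $N_T$; alternatively, invoke Theorem \ref{a}/Corollary \ref{explanation} directly, since the cusp ends furnish an exhaustion by subsets whose boundary tori have $n$-area shrinking to zero, which already gives \emph{one} finite volume (hence, by Collin--Hauswirth--Mazet--Rosenberg \cite{ColHauMazRos,ColHauMazRoserratum} and the fact that cusps contain no closed minimal surface, \emph{closed}) embedded minimal surface in $M$.

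The second step is to upgrade ``one'' to ``one saddle point.'' Apply Proposition \ref{nonbumpy}, or better Theorem \ref{sadddle}, to the compact manifold $(N_T,g_T)$ with its locally area minimizing minimal boundary: this produces a closed embedded minimal surface $\Gamma \subset \interior(N_T)$ of index at most one, which by Theorem \ref{sadddle} can be taken to be a saddle point minimal surface, with area controlled by the width $W(N_T,g_T)+1$. Because the collar modification is supported in an arbitrarily thin neighborhood of the horotori and $g_T=g$ elsewhere, and because $\Gamma$ has bounded area while sitting in the interior, the monotonicity formula keeps $\Gamma$ away from the modified collar once $T$ is large (or once the collar is thin enough), so $\Gamma$ is in fact a saddle point minimal surface for the genuine hyperbolic metric $g$ on $M$. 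At this point we know $M$ contains \emph{at least one} saddle point minimal surface.

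The third step is the heart of the matter: run the argument of Theorem \ref{oneinfinity}, but adapted to $N_T$ (or to a manifold ``generated'' by $\Gamma$ as in Subsection \ref{Appendix D}) in place of a manifold thick at infinity. The point of the construction in Subsection \ref{Appendix D} is precisely that one never really needs global thickness at infinity; one needs that the relevant min-max limits be \emph{closed}. Here that is guaranteed because any complete finite-area minimal surface in $M$ that is not closed must run out a cusp, and in a maximal cusp there is no such surface with area below the (fixed, positive) area of the boundary horotorus — this is exactly the mechanism used in \cite{ColHauMazRos}. Concretely: replace $(M,g)$ by the compact $(N_T,g_T)$, let $\Gamma$, $K_0 := N_T \setminus N_\Gamma$ generate a compact manifold $X$ with locally area minimizing minimal boundary, and then follow Theorem \ref{oneinfinity} verbatim — either at some stage $X^{(j)}$ contains in its interior a stable or degenerate-stable Type I/II minimal hypersurface intersecting the relevant saddle point, producing a new distinct saddle point and letting us iterate, or else the Frankel-type property (\ref{frankel}) holds inside $X$ and the counting argument from \cite[Section 7]{MaNeinfinity} (or the arithmetic lemma \cite[Lemma 14]{AntoineYau} when $\mathcal{A}(\Gamma,g)>0$) yields infinitely many closed embedded minimal surfaces in $\interior(X)$, each a saddle point. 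Pushing everything down via the natural map $X \to N_T \subset M$ and letting $T\to\infty$ (using \cite{Sharp} compactness and the no-minimal-surface-in-the-cusp fact to ensure the limits are closed and lie in the genuine hyperbolic region), we obtain infinitely many saddle point minimal surfaces in $(M,g)$, hence in particular infinitely many closed embedded minimal surfaces.

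The main obstacle, and the place the proof must be careful, is exactly the interface between non-compactness and the closedness of min-max outputs: one must be sure that \emph{every} minimal surface produced along the way (the initial saddle point, the surfaces realizing the widths $\tilde\omega_p$, the stable barriers obtained by minimization, the Gromov--Hausdorff limits as $T\to\infty$) has area bounded below the area of a maximal horotorus cross-section in each cusp, so that it cannot escape to infinity and is forced to be a closed surface in the compact core. This is the role played by ``thickness at infinity'' in Sections \ref{sectiondichotomy}, and in the hyperbolic setting it has to be re-established by hand from the geometry of cusps (maximal horoball packing, mean-convexity of horotori, monotonicity formula), essentially reproving the relevant lemma of \cite{ColHauMazRos}. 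A secondary technical point is the regularity and mean-convexity bookkeeping near the modified collar when applying the level set flow of Appendix C; but this is routine given the strict mean convexity of the horotori.
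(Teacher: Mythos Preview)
Your overall architecture matches the paper's: truncate the cusps to get compact approximations with minimal stable boundary, produce a saddle point via Theorem \ref{sadddle}, then rerun the machinery of Theorem \ref{oneinfinity} on the manifold generated by that saddle point, taking limits as the truncation goes to infinity. That is exactly what the paper does with its sequence $(M_m,g_m)$.

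However, there is a genuine gap at the place you yourself flag as the main obstacle. You propose to guarantee that the min-max outputs stay in a fixed compact core by an \emph{area} bound: ``every minimal surface produced along the way \dots\ has area bounded below the area of a maximal horotorus cross-section in each cusp.'' This does not work. The connected minimal surfaces $\Gamma^{(p)}$ realizing $\tilde\omega_p$ have area $\tilde\omega_p/m_p$, which need not be small and certainly need not be bounded by any fixed horotorus area as $p$ grows; likewise the stable barriers and the intermediate saddle points carry no such a priori area bound. What the paper uses instead is an \emph{index} bound: the result of \cite{ColHauMazRoserratum} says that for any $A>0$ there is $R=R(A)$, independent of the truncation parameter, such that every closed embedded minimal surface in $(M_m,g_m)$ of Morse index at most $A$ (and not equal to a boundary component) lies in $B_R(p)$. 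Since for each fixed $p$ the min-max surfaces have index at most $p$, this gives uniform containment in a fixed ball, so the limit as $m\to\infty$ is closed and lies in the genuine hyperbolic region. This is the fourth bullet in the paper's list of properties of $(M_m,g_m)$, and it is precisely what replaces thickness at infinity here; your area-based substitute does not do the job.

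A minor correction: the horotori in a hyperbolic cusp have mean curvature vector pointing \emph{toward the cusp end} (the cross-sections shrink as you go out, so the normal variation that decreases area points outward). Thus $\partial M_T$ is mean \emph{concave} from the interior of $M_T$, not mean convex as you wrote; the paper says exactly this (``mean-concave tori'', ``mean curvature vector pointing towards $T_{t_k}$''). This sign does not break your scheme---one still deforms the metric near $\partial M_T$ to make the boundary minimal stable---but it is worth getting right, since the direction of the foliation's mean curvature is what the maximum principle feeds on.
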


\begin{proof}
We assume that $g_{\mathrm{hyp}}$ satisfies Condition [M], otherwise the conclusion of the theorem is already true. Let $p\in M$ and denote by $B_r(p)$ the geodesic ball of radius $r$ centered at $p$ in $(M,g_{\mathrm{hyp}})$. For simplicity let us assume that $M$ is oriented so that sections of the cusps are tori. In the non-orientable case, those sections can be Klein bottles, but otherwise the proof is the same.

If $(M,g_{\mathrm{hyp}})$ is not compact then outside of a compact subset, $M$ is made of hyperbolic cusps $C_1,...,C_K$, each of them is naturally foliated by mean-concave tori $\{T^{(k)}_t\}_{t\geq0}$ ($k=1,...,K$). Let us cut $M$ along a collection of tori $\{T^{(1)}_{t_1},...,T^{(K)}_{t_K}\}$ to get $M'$, and deform $g_{\mathrm{hyp}}$ a bit around these tori to obtain a metric $g'$ with respect to which these tori become minimal stable, and such that $\{T^{(k)}_t\}_{t\in[0,t_k)}$ remain strictly mean concave (mean curvature vector pointing towards $T_{t_k}$). If $t_k$ are chosen large enough and the deformations small enough then by \cite{ColHauMazRoserratum}, for all $A>0$, there is $R=R(A)$ independent of $\{t_k\}_{k\in\{1,...,K\}}$ so that any closed embedded minimal surface in $(M',g')$ with Morse index bounded by $A$ is contained in $B_R(p)$ or is a component of $\partial M'$.

Hence by choosing $t_k$ larger and larger, we construct a sequence of compact manifolds $(M_m,g_m)$ approximating the hyperbolic manifold $(M,g_{\mathrm{hyp}})$ with the following properties:
\begin{enumerate}[label=(\roman*)] 
\item $M_m\subset M$, the metrics $g_m$ and $g_{\mathrm{hyp}}$ coincide on $(B_m(p),g_{\mathrm{hyp}})$,
\item for all $\varepsilon>0$, there is a radius $r=r(\varepsilon)$ so that for all $m$ large, $(M_m\backslash B_r(p), g_m)$ has a $1$-sweepout $\{\Sigma_t\}_{t\in[0,1]}$ with $\sup_{t\in[0,1]}\Vol_n(\Sigma_t) \leq \varepsilon$,
\item $\partial M_m$ is non-empty, is a strictly stable minimal surface and has area converging to zero,
\item for all $A>0$, there is $R=R(A)$ so that any closed embedded minimal surface in $(M_m,g_m)$ not equal to a component of $\partial M_m$, and with
Morse index bounded by $A$ is contained in $B_R(p)$,
\item the widths of $(M_m,g_m)$ are uniformly bounded independently of $m$.
\end{enumerate}
By these properties, applying Theorem \ref{sadddle} to each $(M_m,g_m)$ and using Item (iv), we get a saddle point minimal surface $\Gamma\subset (M,g_{\mathrm{hyp}})$. 

Let us explain how to adapt the proof of Theorem \ref{oneinfinity}. As in the proof of Theorem \ref{oneinfinity}, there is a thin strictly mean concave neighborhood $N_\Gamma$ of $\Gamma\subset (M,g_{\mathrm{hyp}})$ which is foliated by surfaces with mean curvature pointing away from $\Gamma$ when non-zero. For each $m$ large, define $K^m_0 :=M_m\backslash N_\Gamma $. Let $X^m$ be generated by $\Gamma$, $K^m_0$ in $(M_m,g_m)$. Then $X^m$ is compact and has locally area minimizing boundary. By the properties of $g_m$ and of hyperbolic cusps, $(X^m,g_m)$ converges (say in the pointed Gromov-Hausdorff distance) to a manifold $X$ with compact boundary endowed with a hyperbolic metric still denoted by $g_{\mathrm{hyp}}$. Here the boundary $\partial X$ is compact because of Item (iv). Let $(\mathcal{C}(X),h)$ be the result of gluing $(X,g_{\mathrm{hyp}})$ to a straight half-cylinder $(\partial X\times [0,\infty),g_{\mathrm{hyp}}\big|_{\partial X} \oplus dt^2)$. For each positive integer $p$, let 
$$\tilde{\omega}_p(X,g_{\mathrm{hyp}}) = \omega_p(\mathcal{C}(X),h)$$
(see Definition 8 in \cite{AntoineYau}) and define
$$\mathcal{A}(\Gamma, g_{\mathrm{hyp}}) = \max\{\Vol_2(C) ; C\text{ is a component of $\partial X$}\}.$$
Similarly we introduce for all $m$ large and all $p$, $(\mathcal{C}(X^m),h_m)$, $\tilde{\omega}_p(X^m,g_m)$ and $\mathcal{A}(\Gamma, g_m)$. It is not hard to see that 
\begin{equation}\label{trotsky}
\lim_{m\to \infty }\mathcal{A}(\Gamma, g_m)=\mathcal{A}(\Gamma, g_{\mathrm{hyp}}).
\end{equation} 
Besides we have 
\begin{equation}\label{eeequality}
\forall p, \quad \lim_{m\to\infty} \tilde{\omega}_p(X^m,g_m)=\tilde{\omega}_p(X,g_{\mathrm{hyp}}).
\end{equation} 
Note that for any compact domain $D$ of $(\mathcal{C}(X),h)$, if $m$ is large then $(\mathcal{C}(X^m),h_m)$ contains an isometric copy of $D$. 
So the inequality $\geq$ in (\ref{eeequality}) follows from definitions whereas the inequality $\leq$ in (\ref{eeequality}) comes from Items (ii) (iii) in the list of properties of $g_m$, the Fact of Subsection \ref{Appendix D} (which says that a $1$-seepout naturally yields a $p$-sweepout with natural mass bounds) and the possibility to glue two $p$-sweepouts together with good bound on the mass (see end of proof of Theorem \ref{flore}): in other words for any $\varepsilon>0$, a $p$-sweepout of a compact domain $D$ of $(\mathcal{C}(X),h)$ with slices of mass at most $A'$ yields, for $m$ large, a $p$-sweepout of $(\mathcal{C}(X^m),h_m)$ with slices of mass at most $A'+\varepsilon$.

If there is a closed minimal surface $S$ in $\interior(X)$ intersecting $\Gamma$, either strictly stable or degenerate stable of Type I or II, then by the same arguments as in the proof of Theorem \ref{oneinfinity} applied to $(X^m,g_m)$, for $m$ large we get another saddle point minimal surface $\Gamma_1\subset (X,g_{\mathrm{hyp}})$ and we can continue. Either we get infinitely many saddle points or the process stops.

Hence we can assume that actually any minimal surface intersecting $\Gamma$ is a saddle point. As in the proof of Theorem \ref{oneinfinity}, by an approximation argument, minimal surfaces produced by min-max can be supposed to intersect $\Gamma$. From here, we can apply almost verbatim the end of proof of Theorem \ref{oneinfinity} to $(X,g_{\mathrm{hyp}})$ by using $(X^m,g_m)$ for $m$ large. More precisely: if $X$ is compact without boundary then we conclude using \cite{MaNeinfinity}. If $X$ is non-compact,  for all $p$ we produce as in the proof of Proposition \ref{etouiminmax} a connected minimal surface $\Gamma^{(p)}_m\subset (\interior(X^m),g_m)$ intersecting $\Gamma$, of index at most $p$, satisfying for an integer $k_{p,m}$ (which is even when $\Gamma^{(p)}_m$ is 1-sided):
$$\tilde{\omega}_p(X^m,g_m) = k_{p,m} \Vol_n(\Gamma^{(p)}_m).$$
By Item (iv) in the list of properties of $(M_m,g_m)$, for each $p$, $\Gamma^{(p)}_m$ is a minimal hypersurface for the original hyperbolic metric if $m$ is large enough. By taking a converging subsequence, there is a connected closed embedded minimal surface $\Gamma^{(p)}\subset (\interior(X),g_{\mathrm{hyp}})$ intersecting $\Gamma$ and a positive integer $k_p$ (which is even when $\Gamma^{(p)}$ is 1-sided) so that:
\begin{equation}\label{realization}
\tilde{\omega}_p(X,g_{\mathrm{hyp}}) = k_{p} \Vol_n(\Gamma^{(p)}).
\end{equation}
Here the use of Item (iv) in the properties of $g_m$ is essential to get a closed limit surface, since $(M,g_{\mathrm{hyp}})$ is not thick at infinity in general.
Moreover since for all $m$, $\tilde{\omega}_{p+1}(X^m,g_m) - \tilde{\omega}_p(X^m,g_m) \geq \mathcal{A}(\Gamma,g_m)$, we get from (\ref{trotsky}) for all $p$:
$$\tilde{\omega}_{p+1}(X,g_{\mathrm{hyp}}) - \tilde{\omega}_p(X,g_{\mathrm{hyp}}) \geq \mathcal{A}(\Gamma,g_{\mathrm{hyp}}).$$
Since outside a compact set, $X$ is made of finitely many cusps, for any $\varepsilon>0$ there is an $R'$ so that any bounded domain of $X\backslash B_{R'}(p)$ has a $1$-sweepout $\{\Sigma_t\}_{t\in[0,1]}$ with $\sup_t \Area(\Sigma_t) \leq \varepsilon$. Thus by arguments of Subsection \ref{Appendix D}, we get 
$$\lim_{p\to \infty} \frac{\tilde{\omega}_p(X,g_{\mathrm{hyp}})}{p} = \mathcal{A}(\Gamma,g_{\mathrm{hyp}}).$$
As in the proof of Theorem \ref{oneinfinity}, by (\ref{trotsky}) we have for any minimal surface in $(\interior(X),g_{\mathrm{hyp}})$ intersecting $\Gamma$:
$$\Vol_n(\Sigma)> \mathcal{A}(\Gamma,g_{\mathrm{hyp}}) \text{ if $\Sigma$ is 2-sided,}$$
$$2 \Vol_n(\Sigma)> \mathcal{A}(\Gamma,g_{\mathrm{hyp}})\text{ if $\Sigma$ is 1-sided.}$$
We have the ingredients to conclude. If $X$ is non-compact and has non-empty boundary then \cite[Lemma 14]{AntoineYau} imply the existence of infinitely many minimal surfaces inside $\interior(X)$ intersecting $\Gamma$. They are saddle points by assumption on $X$ and $\Gamma$. It remains to treat the case where $X$ is non-compact and has empty boundary.
Suppose towards a contradiction that $\{\Gamma^{(p)}\}_{p\geq 1}$ is a finite set. Then $\tilde{\omega}_p(X,g_{\mathrm{hyp}})$ is strictly increasing in $p$: indeed if $\tilde{\omega}_p(X,g_{\mathrm{hyp}})=\tilde{\omega}_{p+1}(X,g_{\mathrm{hyp}})$, by (\ref{realization}) and the finiteness of $\{\Gamma^{(p)}\}_{p\geq 1}$, for some $m$ arbitrarily large 
$$\tilde{\omega}_p(X^m,g_m) = \tilde{\omega}_p(X,g_{\mathrm{hyp}})=\tilde{\omega}_{p+1}(X,g_{\mathrm{hyp}})=\tilde{\omega}_{p+1}(X_m,g_m),$$
which is not possible because of $\partial X_m\neq \varnothing$ and \cite[Theorem 9 (1)]{AntoineYau}. Finally the counting argument of \cite[Section 7]{MaNeinfinity} applies and shows that $\{\Gamma^{(p)}\}_{p\geq 1}$ is actually an infinite set.

\end{proof}

\section{Density of the union of finite volume minimal hypersurfaces} \label{densii}

In our setting the natural topology on the space of complete metrics is the usual strong $C^\infty$-topology, or ``Whitney $C^\infty$-topology'' (see \cite[Chapter 2, \S 1]{Hirsch94} or \cite[Chapter II, \S 3 ]{GG} where it is defined for spaces of functions). It can be described as follows. Let $b_1,b_2,...$ be a sequence of open balls forming a locally finite covering $M$, and for each complete metric $g$, and $i$, $\varepsilon$, $k$, set
$$O(g,i,\varepsilon,k):=\{g' ; \|(g'-g)\big|_{b_i}\|_{C^k}< \varepsilon\}.$$
Now consider $g$ be a complete metric, $\mathbf{e}=(\varepsilon_1,\varepsilon_2,...)$ a sequence of positive numbers, 
$k$ an integer and set
$$O(g,\mathbf{e},{k}) := \bigcap_{i=1}^\infty O(g,i,\varepsilon_i,k).$$
Then by definition the strong $C^\infty$  topology on the space of complete metrics of $M$ has a basis of open sets given by $\{O(g,\mathbf{e},{k})\}_{g,\mathbf{e},{k}}$. The weak $C^\infty$-topology where a basis of open sets is given by $\{O(g,i,k,\varepsilon)\}_{g,i,k,\varepsilon}$ is less relevant for us, because it is ``easier'' to change the behavior at infinity and to be generic in this topology. 
The space of complete metrics endowed with the strong topology is a Baire space \cite[Chapter 2, \S 4, Theorem 4.4]{Hirsch94}.

As usual, a closed minimal hypersurface is said to be non-degenerate when it has no non-trivial Jacobi fields.

Recall from the introduction that $\mathcal{F}_{\mathrm{thin}}$ (resp. $\mathcal{T}_\infty$) is the family of complete metrics on $M$ with a thin foliation at infinity (resp. thick at infinity). $\mathcal{F}_{\mathrm{thin}}$ and $\mathcal{F}_{\mathrm{thin}} \cap \interior(\mathcal{T}_\infty)$ are non-empty open subsets for the strong topology. For instance, consider the following example already mentioned in the introduction: if $N$ is a closed manifold, there is a metric $h$ on $N\times \mathbb{R}$ with a thin foliation at infinity while also satisfying condition $\star_k$ of \cite{Montezuma1}, hence being thick at infinity. Since $\star_k$ is an open condition, a neighborhood of $h$ is in $\mathcal{F}_{\mathrm{thin}} \cap \interior(\mathcal{T}_\infty)$. Note that if $M$ is compact, any metric on $M$ is in $\mathcal{F}_{\mathrm{thin}} \cap \interior(\mathcal{T}_\infty)$. The following theorem generalizes the density theorem of Irie, Marques and Neves \cite{IrieMaNe} to these manifolds:

\begin{theo} \label{densiity}
Let $M$ be a complete $(n+1)$-dimensional manifold with $2\leq n\leq 6$. 
\begin{enumerate}
\item For any metric $g$ in a $C^\infty$-dense subset of $\mathcal{F}_{\mathrm{thin}}$, the union of complete finite volume embedded minimal hypersurfaces in $(M,g)$ is dense.
\item For any metric $g'$ in a $C^\infty$-generic subset of $\mathcal{F}_{\mathrm{thin}} \cap \interior(\mathcal{T}_\infty)$, the union of closed embedded minimal hypersurfaces in $(M,g')$ is dense.
\end{enumerate}
\end{theo}

\begin{proof}

We first prove $(1)$ of the statement.
Suppose that $M$ is endowed with a metric $g$ with a thin foliation at infinity. We would like to find a metric $h\in \mathcal{F}_{\mathrm{thin}}$ arbitrarily close to $g\in \mathcal{F}_{\mathrm{thin}}$ in the $C^\infty$-topology, such that the union of complete finite volume embedded minimal hypersurfaces in $(M,h)$ is dense. 

Since $(M,g)\in \mathcal{F}_{\mathrm{thin}}$, for any $\mu>0$, there is a compact subset $C\subset M$ so that any bounded domain of $M\backslash C$ has a foliation $\{\Sigma_t\}_{t\in[0,1]}$ (given for example by the level sets of the function defining the thin foliation at infinity) such that 
$$\sup_{t\in[0,1]} \Vol_n(\Sigma_t) \leq \mu.$$
Hence by techniques explained in Subsection \ref{Appendix D} and Proposition \ref{flore}, the widths $\omega_p(M,g)$ are finite and satisfy
\begin{equation}\label{vraiment}
\lim_{p\to \infty} \frac{\omega_p(M,g)}{p}=0.
\end{equation}

Recall that $b_1,b_2,...$ are open balls forming a locally finite covering of $M$. Fix a integer $\hat{k}$ and a sequence of positive numbers between $0$ and $1$ called $\hat{\varepsilon}=(\varepsilon_1,\varepsilon_2...)$. In what follows, the norms $\|.\|_{C^{{k}}}$ are all computed with respect to the background metric $g$. By abuse of notations, for any symmetric $2$-tensor $g'$ and constant $c$, we will write 
$$\|g'\|_{C^{\hat{k}}} \leq \hat{\varepsilon}c$$ instead of 
$$ \forall j, \quad\|g'\big|_{b_j}\|_{C^{\hat{k}}} \leq \varepsilon_jc.$$

Let $B_{2r_q}(x_q) \subset (M,g)$ be a sequence of open balls centered at points $x_q$, of radii $2r_q$, forming a base of open sets for the usual topology of $M$. We will construct successive deformations of $g$, called $h_0=g$, $h_1$, $h_2$..., converging to a metric $h_\infty$ satisfying $\|g-h_\infty\|_{C^{\hat{k}}} \leq \hat{\varepsilon}$, such that the union of complete finite volume embedded minimal hypersurfaces in $(M,h_\infty)$ is dense. Suppose that the metrics $h_0$, $h_1$, ..., $h_{L-1}$ have already been constructed and that 
\begin{equation}\label{pasloin}
\|g-h_{L-1}\|_{C^{\hat{k}}}\leq \hat{\varepsilon}\sum_{i=1}^{L-1} \frac{1}{2^i}.
\end{equation} 
We construct $h_{L}$ as follows. 

First we can find compact approximations $(M_m,\gamma_m)$ of $(M,h_{L-1})$ in the following manner. Let $f:M\to [0,\infty)$ be the function defining the thin foliation at infinity. One can check that $$\lim_{t\to \infty}\Vol_n(f^{-1}(t), h_{L-1}) = \lim_{t\to \infty}\Vol_n(f^{-1}(t), g)=0.$$ We can assume that positive integers are not critical values of $f$. Consider $M_1\subset ...\subset M_m...$ the exhaustion of $M$ by compact domains $M_m:=f^{-1}([0,m])$. For each $m$, we perturb slightly the metric $h_{L-1}$ on $M_m$ into $\gamma_m$ so that
\begin{itemize} 
\item the metric $\gamma_m$ is bumpy (use \cite{Whitebumpy2}),
\item the boundary $\partial M_m$ is a strictly stable minimal hypersurface with respect to $\gamma_m$,
\item if we restrict the tensors $\gamma_m$, $h_{L-1}$ to $M_m$ then $\|\gamma_m-h_{L-1}\|_{C^{0}}\leq \frac{1}{m}$,
\item if we restrict the tensors $\gamma_m$, $h_{L-1}$ to $M_m\backslash N_m$ where $N_m$ is a $1/m$-neighborhood of $\partial M_m$, then $\|\gamma_m-h_{L-1}\|_{C^{\hat{k}}}\leq \frac{1}{m}$.
\end{itemize}
Let $\tilde{\omega}_p(M_m,\gamma_m) := {\omega}_p(\mathcal{C}(M_m),\gamma_m)$ where $\mathcal{C}(M_m)$ is the result of gluing to $M_m$ a straight half-cylinder $(\partial M_m\times [0,\infty), \gamma_m\big|_{\partial M_m} \oplus dt^2)$ along $\partial M_m$ (as in Subsection \ref{Appendix D}).

Let $q_L$ be the first integer for which $B_{2r_{q_L}}(x_{q_L})$ does not intersect any complete finite volume embedded minimal hypersurface in $(M,h_{L-1})$. If $q_L$ does not exists, we are done by taking $h_L=h_\infty$. Otherwise let $s_L$ be a nonnegative symmetric $2$-tensor with support $B_{2r_{q_L}}(x_{q_L})$ and equal to $h_{L-1}$ inside $B_{r_{q_L}
}(x_{q_L})$. There is a $\mu_L>0$ small enough so that 
\begin{equation}\label{flingue}
\forall t\in[0,\mu_L], \quad\|ts_L\|_{C^{\hat{k}}}\leq \hat{\varepsilon}\frac{1}{2^{L}}.
\end{equation}

Consider the deformation of $ \gamma_m$ given by
$$\gamma_m(t)=\gamma_m +ts_L, \quad t\in[0,\mu_L].$$ 

For each fixed $p$, by the third bullet, $\Vol_n(\partial M_m,\gamma_m(t)) $ goes to zero so by arguments in proof of Theorem \ref{vrille},
\begin{align}\label{attention}
\begin{split}
& \lim_{m\to \infty} \tilde{\omega}_p(M_m,\gamma_m) = \omega_p(M,h_{L-1}),\\
& \lim_{m\to \infty} \tilde{\omega}_p(M_m,\gamma_m+\mu_Ls_L) = \omega_p(M,h_{L-1}+\mu_Ls_L) .
\end{split}
\end{align}

\begin{claim}There is an integer $\bar{p}$, such that for all $m$ large enough, 
$$\tilde{\omega}_{\bar{p}}(M_m,\gamma_m+\mu_L s_L)>\tilde{\omega}_{\bar{p}}(M_m,\gamma_m).$$
\end{claim}

We postpone its proof. The following argument essentially appears in \cite{IrieMaNe}. Fix $\bar{p}$ as in the claim. From \cite[Theorem 10]{AntoineYau} that for all $t\in[0,\mu_L]$, there are connected closed minimal hypersurfaces $\Gamma_1,...,\Gamma_P$ embedded in $\interior(M_m)$ and positive integers $q_1,...,q_P$ so that 
$$\tilde{\omega}_{\bar{p}}(M_m,\gamma_m+ts_L) = \sum_{i=1}^P q_i\Vol_n(\Gamma_i,\gamma_m+ts_L).$$
The hypersurfaces $\Gamma_i$ can be chosen to have index at most $\bar{p}$ \cite{MaNeindexbound}.
Since $\gamma_m$ is bumpy the set of numbers 
$$\{\sum_{i=1}^P q'_i\Vol_n(\Gamma'_i,\gamma_m); q'_i \text{ integers}, \Gamma'_i \subset (M_m,\gamma_m) \text{ closed minimal hypersurfaces}\}$$
is countable. Moreover, since $\tilde{\omega}_{\bar{p}}(M_m,\gamma_m+ts_L)$ is continuous in $t$, for all $m$ large, there exists $t'_m\in(0,\mu_L)$ for which there is a minimal hypersurface $\Gamma^{(m)} \subset (M_m,\gamma_m+t'_ms_L)$ intersecting the support of $s_L$, $B_{2r_{q_L}}(x_{q_L})$, of index at most $\bar{p}$. Taking a subsequence limit as $m\to \infty$ (\cite{Sharp}), we get a $t'_\infty\in[0,\mu_L]$ and a complete finite volume embedded minimal hypersurface in $(M,h_{L-1} + t'_\infty s_L )$ intersecting $B_{2r_{q_L}}(x_{q_L})$. Set
$$h_L=h_{L-1} + t'_\infty s_L.$$
By (\ref{flingue}), we have $\|h_L-h_{L-1}\|_{C^{\hat{k}}} \leq \hat{\varepsilon}\frac{1}{2^{L}}$ so indeed $\|g-h_{L}\|_{C^{\hat{k}}}\leq \hat{\varepsilon}\sum_{i=1}^{L} \frac{1}{2^i}$. We continue this construction and get by completeness a limit metric $h_\infty$ with $\|g-h_{\infty}\|_{C^{\hat{k}}}\leq \hat{\varepsilon}$, for which the union of complete finite volume embedded minimal hypersurfaces in $(M,h_\infty)$ is dense. Note that it was very helpful to use \cite[Theorem 10]{AntoineYau} and construct closed minimal hypersurfaces in order to take advantage of the bumpy metric theorems \cite{Whitebumpy,Whitebumpy2}.

Let us prove the claim (which plays here the role of the Weyl Law \cite{LioMaNe} in \cite{IrieMaNe}). Because of (\ref{attention}), we only need to show that for a certain $\bar{p}$, 
$$\omega_{\bar{p}}(M,h_{L-1} +\mu_Ls_L) > \omega_{\bar{p}}(M,h_{L-1}). $$
The large inequality is always true since $s_L$ is nonnegative, the point is to show that these two terms are not equal. Note that as a consequence of (\ref{vraiment}) and (\ref{pasloin}), (\ref{flingue}), there is a subsequence $\{p_k\}$ and a sequence of nonnegative numbers $\{\delta_k\}$ such that
\begin{equation} \label{thinkey}
\forall k\quad \omega_{p_k+1}(M,h_{L-1})-\omega_{p_k}(M,h_{L-1}) =\delta_k,\quad \lim_{k\to\infty} \delta_k =0.
\end{equation}
Suppose towards a contradiction that for all $k$ we have
$$\omega_{{p}_k+1}(M,h_{L-1} +\mu_Ls_L) = \omega_{{p}_k+1}(M,h_{L-1}),$$
$$\omega_{{p}_k}(M,h_{L-1} +\mu_Ls_L) = \omega_{{p}_k}(M,h_{L-1}). $$
Let $\bar{\delta}>0$ be a small number that we will fix later, let $D\subset M$ be a compact domain containing $B_{r_{q_L}}(x_{r_{q_L}})$. Consider a $(p_{k}+1)$-sweepout $\Phi:X\to \mathcal{Z}_{n,rel}(D;\mathbb{Z}_2)$ so that 
$$\sup_{x\in X} \mathbf{M}(\Phi(x),h_{L-1} +\mu_Ls_L) \leq \omega_{{p}_k+1}(M,h_{L-1} +\mu_Ls_L) +\bar{\delta}$$
where $\mathbf{M}(. , g')$ denotes the mass computed with a metric $g'$. For clarity let us write $B_{r_q}(x_{r_q})$ instead of $B_{r_{q_L}}(x_{r_{q_L}})$. Now the key remark is that $\Phi$ restricted to
$$X_1:=\{x\in X ; \mathbf{M}(\Phi(x)\llcorner  B_{r_q}(x_{r_q}), h_{L-1} +\mu_Ls_L) \geq \omega_1(B_{r_q}(x_{r_q}), h_{L-1} +\mu_Ls_L)/2 \}$$
is a $p_{k}$-sweepout of $D$. The proof is a Lusternik-Schnirelmann type argument used in \cite{Gromovnonlinearspectra,Gromovwaist} (see also \cite[Section 3]{Guth} \cite[Section 8]{MaNeinfinity}). Indeed suppose that it is not, remark that $\Phi$ restricted to $X_2:=X\backslash X_1$ is clearly not a $1$-sweepout (a $1$-sweepout of $M$ has to be a $1$-sweepout of $B_{r_q}(x_{r_q})$ after restricting the image currents to $B_{r_q}(x_{r_q})$). Consider $\lambda:=\Phi^*(\bar{\lambda}) \in H^1(X,\mathbb{Z}_2)$ as in \cite[Definition 4.1]{MaNeinfinity}. Consider the inclusion maps $i_a:X_a\to X$ ($a=1,2$), then the previous restrictions not being sweepouts means $i^*_1(\lambda^{p_k}) =0 \in H^{p_k}(X_1,\mathbb{Z}_2)$,  $i^*_2(\lambda) =0 \in H^{1}(X_2,\mathbb{Z}_2)$. Consider the exact sequences
$$H^{p_k}(X,X_1;\mathbb{Z}_2)\overset{j^*}{\rightarrow}H^{p_k}(X;\mathbb{Z}_2) \overset{i^*_1}{\rightarrow} H^{p_k}(X_1;\mathbb{Z}_2),$$ 
$$H^{1}(X,X_2;\mathbb{Z}_2)\overset{j^*}{\rightarrow}H^1(X;\mathbb{Z}_2) \overset{i^*_2}{\rightarrow} H^1(X_2;\mathbb{Z}_2).$$ 
Then we can find $\lambda_1\in H^{p_k}(X,X_1;\mathbb{Z}_2)$, $\lambda_2\in H^{1}(X,X_2;\mathbb{Z}_2)$ so that 
$j^*(\lambda_1) = \lambda^{p_k}$, $j^*(\lambda_2) = \lambda$, which implies
$$\lambda^{p_k+1} = j^*(\lambda_1) \smile j^*(\lambda_2)= j^*(\lambda_1\smile \lambda_2)$$
but the last term has to be zero since it belongs to $H^{p_k+1}(X,X_1\cup X_2;\mathbb{Z}_2)=H^{p_k+1}(X,X;\mathbb{Z}_2)=0$. This is impossible by definition of $(p_{k}+1)$-sweepouts, hence $\Phi\big|_{X_1}$ is a $p_k$-sweepout. Then since $s_L=h_{L-1}$ in $B_{r_q}(x_{r_q})$, if we estimate the mass for the metric $h_{L-1}$:
\begin{align*}
\sup_{x\in X_1}\mathbf{M}(\Phi(x),h_{L-1}) 
 \leq & \sup_{x\in X_1}\mathbf{M}(\Phi(x),h_{L-1} +\mu_L s_L) \\
 & - (1-(1+\mu_L)^{-n/2})\omega_1(B_{r_q}(x_{r_q}), h_{L-1} +\mu_Ls_L)/2\\
\leq & \quad \omega_{p_k+1}(M,h_{L-1}) + \bar{\delta}\\
& - (1-(1+\mu_L)^{-n/2})\omega_1(B_{r_q}(x_{r_q}), h_{L-1} +\mu_Ls_L)/2\\
 \leq&  \quad \omega_{p_k}(M,h_{L-1}) +\delta_k  + \bar{\delta}\\
 & - (1-(1+\mu_L)^{-n/2})\omega_1(B_{r_q}(x_{r_q}), h_{L-1} +\mu_Ls_L)/2.
 \end{align*}
However, as $k$ goes to infinity, $\delta_k$ converges to zero while the last term is positive and independent of $k$, $D$, $\bar{\delta}$: it follows that if we chose $k$ large and then $\bar{\delta}$ smaller than a fraction of the last term, then $\sup_{x\in X_1}\mathbf{M}(\Phi(x),h_{L-1})<\omega_{p_k}(M,h_{L-1})$ for any $D$ large enough. This obviously contradicts the definition of $p_k$-width (\cite[Definition 8]{AntoineYau}). Hence the claim is verified.

 \vspace{1em}

Next we prove $(2)$ of the statement. Let $\mathcal{U} := \mathcal{F}_{\mathrm{thin}}\cap \interior(\mathcal{T}_\infty)$. Reasoning as in \cite{IrieMaNe}, it is enough to show that for a bounded open set $U\subset M$,  the space $\mathcal{M}_U$ of metrics $g$ in $\mathcal{U}$ such that there is a non-degenerate closed embedded minimal hypersurface in $(M,g)$ intersecting $U$ is open and dense inside $\mathcal{U}$. Openness follows from \cite{Whitebumpy}. Denseness is proved as follows. Fix an integer $\hat{k}$ and a sequence of positive numbers $\hat{\varepsilon}=(\varepsilon_1,\varepsilon_2,...)$, we follow the same notations as previously. We pick a metric $\bar{g}\in\mathcal{U}$, let $\bar{s}$ be a nonnegative symmetric $2$-tensor with support $U$ and equal to $\bar{g}$ in an open ball $B\subset U$. For $\mu>0$ small enough, 
\begin{equation}\label{glace}
\forall t\in[0,\mu],\quad \|t.\bar{s}\|_{C^{\hat{k}}} \leq \hat{\varepsilon}/2
\end{equation} 
where the norm is computed with $\bar{g}$ and by choosing $\varepsilon_1,\varepsilon_2...$ even smaller if necessary, we can suppose that 
\begin{equation}\label{feu}
\forall t\in[0,\mu], \quad\bar{g}+t.\bar{s}\in\mathcal{U}.
\end{equation}
Consider the compact manifolds $(M_m,\gamma_m)$ introduced previously, but with $\bar{g}$ (resp. $U$, $\bar{s}$) replacing $h_{L-1}$ (resp. $B_{2r_{q_L}}(x_{q_L})$, $s_L$), so that the bumpy metrics $\gamma_m$ converge locally to $\bar{g}$. Consider an integer $\bar{p}$ so that the claim in the proof of (1) is valid. As in the proof of (1), this implies that there is $t'_\infty\in[0,\mu]$ such that $(M,\bar{g}+t'_\infty\bar{s})$ contains a finite volume connected complete embedded minimal hypersurface $\Gamma$ intersecting $U$. Since $\bar{g}+t'_\infty\bar{s}\in \mathcal{T}_\infty$ by (\ref{feu}), $\Gamma$ is closed. Then we can perturb $\bar{g}+t'_\infty\bar{s}$ conformally like in \cite[Proposition 2.3]{IrieMaNe} to get by (\ref{glace}) a metric $g\in \mathcal{U}$ satisfying $\|g-\bar{g}\|_{C^{\hat{k}}}\leq \hat{\varepsilon}$ and for which $\Gamma$ is a non-degenerate closed embedded minimal hypersurface intersecting $U$. The denseness of $\mathcal{M}_U $ in $\mathcal{U}$ is checked, which finishes the proof.

\end{proof}

We end this section with a slightly non rigorous construction of non-compact manifolds of finite volume which do not obey the Weyl law  \cite{LioMaNe}, which justify our use of a new argument in the proof of Theorem \ref{densiity} compared to \cite{IrieMaNe}.

\begin{remarque} \label{weird} By \cite{Guthwidthvolume} (see also \cite{PapaSwen,SabourauRic}), 
one can construct a sequence of spheres $S_k:=(S^3,g_k)$ of volume less than $\frac{1}{k^2}$ with first width in the sense of Almgren-Pitts equal to $1$. Then by forming the infinite connected sum of $S_i$ with thin necks, one gets a non-compact manifold of finite volume but with widths $\omega_p$ growing linearly, by a Lusternik-Schnirelmann type argument. In this construction, there is a lot a freedom in the rate of growth of $\omega_p$ by playing with the sizes of $S_k$. In particular, such a finite volume manifold generally does not satisfy a Weyl law. 
\end{remarque}

\section*{Appendix A: Degenerate stable minimal hypersurfaces} 

We collect some simple facts about the structure of neighborhoods of minimal hypersurfaces.

Let $(N,g)$ be a compact Riemannian manifold. In this Appendix, minimal hypersurfaces are smooth closed embedded. We say that a 2-sided minimal hypersurface is \textit{degenerate} if its Jacobi operator has a non-trivial kernel. If such a hypersurface is degenerate and stable, then the kernel of its Jacobi operator is spanned by a positive eigenfunction $\phi_0$. Note that for a 2-sided minimal hypersurface which is either unstable or non-degenerate stable, it is well-known that the hypersurface has a neighborhood foliated by closed leaves which, when not equal to the minimal hypersurface itself, have non-zero mean curvature vector. A similar result is true for degenerate stable minimal hypersurfaces, as we noted in \cite[Lemma 11]{AntoineYau}.

\begin{lemme} \label{degenerate}
Let $\Gamma$ be a 2-sided degenerate stable minimal hypersurface in the interior of $(N,g)$ and $\nu$ a choice of unit normal vector on $\Gamma$. Denote by $\phi_0$ a positive function in the kernel of the Jacobi operator of $\Gamma$. Then there exist a positive number $\delta_1$ and a smooth map $w:\Gamma \times (-\delta_1,\delta_1) \to \mathbb{R}$ with the following properties:
\begin{enumerate}
\item for each $x\in \Gamma$, we have $w(x,0)=0$ and $\phi_0=\frac{\partial}{\partial t}w(x,t)|_{t=0}$,
\item for each $t\in(-\delta_1,\delta_1)$, we have $\int_\Gamma (w(.,t)-t\phi_0)\phi_0 =0 $,
\item for each $t\in(-\delta_1,\delta_1)$, the mean curvature of the hypersurface 
$$\Gamma_t :=\{\exp(x,w(x,t) \nu(x)) ; x\in \Gamma\}$$
is either positive or negative or identically zero,
\item if $\Gamma_t$ is minimal for a $t\in(-\delta_1,\delta_1)$, its Morse index is at most one.
\end{enumerate}
\end{lemme}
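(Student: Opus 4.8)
The plan is to construct the map $w$ by reducing the equation ``mean curvature of the normal graph of $u$ over $\Gamma$ equals a prescribed function'' to a finite-dimensional problem via Lyapunov--Schmidt reduction, exploiting that $\Gamma$ is stable and degenerate so that the kernel of the Jacobi operator $L_\Gamma$ is one-dimensional and spanned by a positive function $\phi_0$. First I would set up the nonlinear operator $\mathcal{H}: C^{2,\alpha}(\Gamma)\to C^{0,\alpha}(\Gamma)$ sending $u$ to the mean curvature of $\Gamma_u:=\{\exp_x(u(x)\nu(x))\}$; its linearization at $0$ is $L_\Gamma=\Delta_\Gamma+(|A|^2+\Ric(\nu,\nu))$. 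Since $\Gamma$ is degenerate stable, $\ker L_\Gamma=\mathbb{R}\phi_0$ with $\phi_0>0$ (the lowest eigenvalue is $0$ with positive eigenfunction), and $L_\Gamma$ is self-adjoint, so $\mathrm{Im}\,L_\Gamma=\{f:\int_\Gamma f\phi_0=0\}$. I would write $u=t\phi_0+v$ with $\int_\Gamma v\phi_0=0$, and solve $\Pi\circ\mathcal{H}(t\phi_0+v)=0$ for $v=v(t)$ by the implicit function theorem on the orthogonal complement of $\phi_0$ (where $\Pi$ is $L^2(\phi_0\,d\mathrm{vol})$-orthogonal projection onto $\mathrm{Im}\,L_\Gamma$); this gives a smooth $v(t)$ with $v(0)=0$, $v'(0)=0$ for $|t|<\delta_1$. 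Setting $w(x,t):=t\phi_0(x)+v(t)(x)$ immediately yields (1) since $\partial_t w|_{t=0}=\phi_0$, and (2) since $\int_\Gamma(w(\cdot,t)-t\phi_0)\phi_0=\int_\Gamma v(t)\phi_0=0$ by construction.

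For (3), observe that after the reduction the full mean curvature $\mathcal{H}(w(\cdot,t))$ lies in the one-dimensional space $\mathbb{R}\phi_0$ modulo $\mathrm{Im}\,L_\Gamma$; more precisely $(\mathrm{Id}-\Pi)\mathcal{H}(w(\cdot,t))=\beta(t)\phi_0$ for a smooth scalar function $\beta(t)$, and $\Pi\mathcal{H}(w(\cdot,t))=0$. Actually I need the stronger statement that $\mathcal{H}(w(\cdot,t))$ itself is a function of fixed sign (or zero) on $\Gamma$; this is where I would invoke the maximum principle together with the structure of the reduced equation. The point is that the mean curvature function $H_t:=\mathcal{H}(w(\cdot,t))$ satisfies a linear elliptic equation of the form $L_\Gamma' H_t = (\text{nonneg.~or controlled terms})\cdot H_t$ along the foliation (differentiating the mean curvature in $t$ gives a Jacobi-type equation for $\partial_t$ of the normal displacement), so by the strong maximum principle $H_t$ cannot vanish somewhere without vanishing identically; and when it does not vanish identically it keeps a definite sign by continuity in $t$ and the fact that it has a sign for $t$ near $0$ (one can check the sign via the second-order Taylor expansion of $t\mapsto\int_\Gamma H_t\phi_0$, which is governed by the cubic term of $\mathcal{H}$, using stability to pin down the direction). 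This is essentially the argument already used in \cite[Lemma 11]{AntoineYau}, so I would cite that and only indicate the modifications.

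For (4), if $\Gamma_t$ is minimal for some $t\neq 0$, then it is a stable-or-nearly-stable minimal hypersurface $C^{2,\alpha}$-close to $\Gamma$, so its Jacobi operator $L_{\Gamma_t}$ is a small perturbation of $L_\Gamma$; since $L_\Gamma$ has exactly one nonpositive eigenvalue (namely $0$, simple), $L_{\Gamma_t}$ has at most one nonpositive eigenvalue for $\delta_1$ small by continuity of eigenvalues under $C^0$-perturbation of the potential, hence $\mathrm{index}(\Gamma_t)\leq 1$. The main obstacle I anticipate is establishing the definite-sign statement in (3) rigorously: the Lyapunov--Schmidt reduction only controls the $\phi_0$-component of $\mathcal{H}$, and upgrading ``$H_t$ is a multiple of $\phi_0$ to leading order'' to ``$H_t$ has a fixed sign pointwise'' requires combining the reduced scalar ODE for $\beta(t)$ with an elliptic maximum-principle argument and a careful Taylor expansion; the sign analysis near $t=0$ (distinguishing genuinely degenerate behavior, and checking $H_t\not\equiv 0$ for $t\neq 0$ unless a whole interval of minimal leaves occurs, which is the ``Type III'' possibility left open) is the delicate point and is exactly why the three types I, II, III are later introduced.
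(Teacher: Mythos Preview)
Your approach via Lyapunov--Schmidt reduction is correct and is precisely the argument behind \cite[Lemma 11]{AntoineYau}, which is all the paper invokes for items (1)--(3); your argument for (4) by continuity of eigenvalues is likewise the paper's.

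However, you overcomplicate item (3) and misidentify the ``main obstacle''. Once you have solved $\Pi\,\mathcal{H}(w(\cdot,t))=0$ with $\Pi$ the $L^2$-orthogonal projection onto $\mathrm{Im}\,L_\Gamma=(\ker L_\Gamma)^\perp$, the mean curvature $\mathcal{H}(w(\cdot,t))$ lies \emph{exactly} in $\ker\Pi=\ker L_\Gamma=\mathbb{R}\phi_0$, so $\mathcal{H}(w(\cdot,t))=\beta(t)\phi_0$ on the nose, not merely ``to leading order''. Since $\phi_0>0$ everywhere, the mean curvature of $\Gamma_t$ at every point has the sign of the scalar $\beta(t)$: positive, negative, or identically zero. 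No maximum principle, no elliptic equation for $H_t$, and no Taylor expansion are needed. You actually wrote down both $\Pi\mathcal{H}=0$ and $(\mathrm{Id}-\Pi)\mathcal{H}=\beta(t)\phi_0$ yourself, but then failed to combine them.

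Also, the sign analysis of $\beta(t)$ near $t=0$ (whether it vanishes only at $0$, whether intervals of minimal leaves occur, etc.) is \emph{not} part of this lemma. Item (3) only asserts that for each fixed $t$ the mean curvature has a definite sign or vanishes identically; it says nothing about which $t$ give minimal $\Gamma_t$. The Type I/II/III trichotomy and Condition [M] arise in the subsequent discussion, not in the lemma itself, so your ``delicate point'' is simply not a point you need to address here.
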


\begin{proof}
The first three items were proved in \cite{AntoineYau}. The last item follows from the fact that, since the first eigenvalue of the Jacobi operator is simple, if a sequence of connected minimal hypersurfaces $S_k$ converges smoothly to a stable minimal hypersurface, then for $k$ large $S_k$ has index at most one.
\end{proof}

If the minimal hypersurface $\Gamma$ is 1-sided, one can still apply the previous lemma in a double-cover of $N$ where $\Gamma$ lifts to a 2-sided hypersurface. If $\Gamma$ is a boundary component of $N$ then the lemma is still valid on the interior side of $\Gamma$.

\textbf{Discussion:} Note that Lemma \ref{degenerate} implies the following for a 2-sided degenerate stable minimal hypersurface $\Gamma$ (the situation is completely similar for 1-sided minimal hypersurfaces): by the first variation formula if $\Gamma$ is degenerate stable, the $n$-volume of the hypersurfaces $\Gamma_t$ is a smooth function 
$$A: (-\delta_1,\delta_1) \to \mathbb{R}$$
so that the sign of $\partial_t A$ is the sign of the mean curvature of $\Gamma_t$ (with the correct continuous choice of unit normal). This function $A$ associated to $\Gamma$ is defined for 2-sided $\Gamma$ embedded inside $\interior(N)$; if $\Gamma$ is 1-sided, then we call $A$ the analogue function associated to the double 2-sided cover and if $\Gamma$ is a boundary component of $N$, then $A$ is only defined on $[0,\delta_1)$. If the function $A$ is not strictly monotonous on any intervals of the form $(-\delta_2,0)$ or $(0,\delta_2)$ where $\delta_2\leq \delta_1$, then there are clearly an infinite sequence of $t_k\in (0,\delta_1)$ converging to $0$ so that for each $k$, $A$ restricted to an open interval containing $t_k$ achieves a maximum at $t_k$. In other words, the minimal hypersurfaces $\Gamma_{t_k}$ are saddle point minimal hypersurfaces (as defined in the introduction). If on the contrary the function $A$ is strictly monotonous on both $(-\delta_2,0)$ and $(0,\delta_2)$, then we have three possibilities: 
\begin{itemize}
\item Type I: $A$ is monotonous on $(-\delta_2,\delta_2)$, 
\item Type II: $A$ achieves a strict minimum at $0$,
\item Type III: $A$ achieves a strict maximum at $0$ (in this last case $\Gamma$ is a saddle point). 
\end{itemize}
Because we are interested in constructing saddle point minimal hypersurfaces, we introduce the following condition: we say that 
\begin{align*}
&\text{a degenerate stable minimal hypersurface $\Gamma$ }\\
&\text{(resp. the metric $g$) satisfies Condition [M]} 
\end{align*}
if, with the previous notations, the function $A$ associated to $\Gamma$ (resp. to any degenerate stable minimal hypersurface) is strictly monotonous on both $(-\delta_2,0)$ and $(0,\delta_2)$ for $\delta_2>0$ small enough. By what we just said, if a degenerate stable minimal hypersurface does not satisfy Condition [M] then there are infinitely many saddle point minimal hypersurfaces. On the other hand if a degenerate stable minimal hypersurface satisfies Condition [M], then it is either of Type I, II or III. 

Again this classification clearly extend to 1-sided hypersurfaces or boundary components. By convention a degenerate stable minimal hypersurface of Type I is always 2-sided embedded in the interior of $(N,g)$ (so for instance a boundary component can be degenerate stable of Type II or III only).

Let $\Gamma$ be stable degenerate of Type II or III. The next lemma enables to construct approximations of $g$ for which the hypersurfaces $\Gamma_t$ are non-minimal if $t\neq 0$.
\begin{lemme} \label{adeformation}
Let $(N,g)$ be a compact manifold with boundary and let $\Gamma$ be a disjoint union of degenerate stable minimal hypersurfaces of Type II or III, embedded either in $\interior(N)$ or in $\partial N$. Then there is a sequence of metrics $h^{(q)}$ converging to $g$ in the $C^\infty$-topology so that $\Gamma$ is still minimal for $h^{(q)}$ and with the previous notations, for each component $\Gamma'$ of $\Gamma$:
\begin{enumerate}
\item if $\Gamma'$ is of Type II, with respect to $h^{(q)}$, $\Gamma'$ is strictly stable, moreover for all $t\in (-\delta_2,0)\cup (0,\delta_2)$ the mean curvature vector of $\Gamma'_t$ is never zero and points towards $\Gamma'$,
\item if $\Gamma'$ is of Type III, with respect to $h^{(q)}$, $\Gamma'$ is unstable, moreover for all $t\in (-\delta_2,0)\cup (0,\delta_2)$, the mean curvature vector of $\Gamma'_t$ is never zero and points away from $\Gamma'$.
\end{enumerate}
\end{lemme}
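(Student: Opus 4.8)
The plan is to obtain $h^{(q)}$ from $g$ by a perturbation supported in a small tubular neighbourhood of $\Gamma$, designed to leave $\Gamma$ minimal while adding a small \emph{constant} to the zeroth order term of its Jacobi operator. Since $\Gamma$ is a disjoint union of finitely many components admitting pairwise disjoint tubular neighbourhoods, I would treat each component separately and with its own sign; by passing to a double cover (in the $1$-sided case) and to the one-sided Fermi neighbourhood (in the boundary case), I may assume $\Gamma'$ is connected, $2$-sided and embedded in $\interior(N)$, with positive kernel eigenfunction $\phi_0=\partial_t w(\cdot,t)|_{t=0}$ as in Lemma \ref{degenerate}. In Fermi coordinates $(x,\rho)\in\Gamma'\times(-\delta,\delta)$ for $g$, write $g=d\rho^2+g_\rho$, so that minimality of $\Gamma'$ reads $\mathrm{tr}_{g_0}\partial_\rho g_\rho|_{\rho=0}=0$, where $g_0=g\big|_{\Gamma'}$. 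The perturbation is $h^{(q)}:=g+\varepsilon_q\,\chi(\rho)\,\rho^2\,\pi^*(g\big|_{\Gamma'})$, where $\pi:\Gamma'\times(-\delta,\delta)\to\Gamma'$ is the Fermi projection, $\chi$ is a cutoff equal to $1$ near $\rho=0$ and supported in the tube, and $\varepsilon_q=+\tfrac1{nq}$ when $\Gamma'$ is of Type II, $\varepsilon_q=-\tfrac1{nq}$ when $\Gamma'$ is of Type III; then $h^{(q)}\to g$ in $C^\infty$.

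Next I would record the two effects of this perturbation. Because $\varepsilon_q\chi\rho^2\pi^*(g\big|_{\Gamma'})$ and its first $\rho$-derivative vanish along $\{\rho=0\}$, the induced metric and the second fundamental form of $\Gamma'$ are unchanged, so $\Gamma'$ remains minimal for $h^{(q)}$ and $|A_{\Gamma'}|^2$, $\Delta_{\Gamma'}$ are unaffected. The only term of the Jacobi operator that changes is $\Ric(\nu,\nu)$ along $\Gamma'$: since the curves $\{x=\mathrm{const}\}$ are unit-speed geodesics for both metrics the two Fermi charts coincide, and a Riccati-equation computation gives $\Ric_{h^{(q)}}(\nu,\nu)\big|_{\Gamma'}=\Ric_g(\nu,\nu)\big|_{\Gamma'}-\varepsilon_q\,\mathrm{tr}_{g_0}(g\big|_{\Gamma'})=\Ric_g(\nu,\nu)\big|_{\Gamma'}-n\varepsilon_q$, a constant shift. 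Hence $L_{h^{(q)}}=L_g-n\varepsilon_q=L_g\mp\tfrac1q$, which has the same eigenfunctions as $L_g$ with all eigenvalues shifted; in particular $\lambda_1(-L_{h^{(q)}})=\pm\tfrac1q$, so $\Gamma'$ is strictly stable for Type II and unstable for Type III, and $\phi_0$ is the positive first eigenfunction of $-L_{h^{(q)}}$.

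It remains to control the leaves $\Gamma'_t$ of Lemma \ref{degenerate}, which are defined as the $g$-normal graph of $w(\cdot,t)$ with $w(x,t)=t\phi_0(x)+O(t^2)$. Since the two Fermi charts coincide, $\Gamma'_t=\{\rho=w(\cdot,t)\}$, and since $\Gamma'$ is minimal for $h^{(q)}$ the mean curvature of a normal graph expands as $H_{h^{(q)}}(\Gamma'_t)=-L_{h^{(q)}}w(\cdot,t)+O(\|w(\cdot,t)\|_{C^2}^2)=-L_g w(\cdot,t)\pm\tfrac1q w(\cdot,t)+O(t^2)$. Using $L_g\phi_0=0$, hence $L_g w(\cdot,t)=O(t^2)$, this equals $\pm\tfrac{t}{q}\phi_0(x)+O(t^2)$, uniformly in $x\in\Gamma'$ by compactness. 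As $\phi_0\geq c_0>0$ on $\Gamma'$, there is $\delta_2=\delta_2(q)>0$ so that for $0<|t|<\delta_2$ the function $H_{h^{(q)}}(\Gamma'_t)$ is nowhere zero on $\Gamma'_t$ with sign $\mathrm{sign}(\pm t)$; then the convention $\vec H=-H\nu$ with $\nu=\partial_\rho$ (equivalently a first-variation-of-area check, consistent with strict stability resp.\ instability of $\Gamma'$) shows the mean curvature vector points towards $\Gamma'$ in the Type II case and away from $\Gamma'$ in the Type III case, which are conclusions (1) and (2). Finally I would note that in the $1$-sided case the perturbation, built only from the normal distance and the induced metric, is automatically invariant under the deck involution of the double cover and so descends to $N$, and in the boundary case the same computation applies verbatim on the one-sided Fermi neighbourhood while leaving the other components of $\partial N$ untouched.

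I expect the main obstacle to be arranging that the shift of the Jacobi operator is exactly a constant on $\Gamma'$ — so that the eigenfunctions, and in particular $\phi_0$, are preserved — while keeping $\Gamma'$ minimal; this is what forces the precise form $\rho^2\pi^*(g\big|_{\Gamma'})$ of the perturbation and makes the Riccati computation deliver a constant. The remaining subtlety is the bookkeeping of sign conventions (which side of $\Gamma'$ the leaf $\Gamma'_t$ lies on, and the relation between the scalar mean curvature $H$ and the mean curvature vector), needed to state "towards" versus "away" correctly; this is routine once the expansion $H_{h^{(q)}}(\Gamma'_t)=\pm\tfrac{t}{q}\phi_0+O(t^2)$ is available.
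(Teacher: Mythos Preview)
Your approach differs from the paper's: you perturb $g$ additively by $\varepsilon_q\chi(\rho)\rho^2\pi^*(g|_{\Gamma'})$ in Fermi coordinates, whereas the paper uses a conformal change $h^{(q)}=f_q\cdot g$ with $f_q$ a function of the \emph{foliation parameter} (i.e.\ constant on each leaf $\Gamma'_t$). Your construction is elegant for the stability conclusion --- the Riccati computation showing $L_{h^{(q)}}=L_g\mp 1/q$ is clean and correct --- and the paper has to argue separately (via the second derivative of $f_q$ at $0$) that $\Gamma'$ becomes strictly stable or unstable.

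However, there is a genuine gap in your control of the leaves. From $H_{h^{(q)}}(\Gamma'_t)=\pm\tfrac{t}{q}\phi_0+O(t^2)$ you conclude only for $|t|<\delta_2(q)$; since the leading term is of order $1/q$ while the remainder constant is uniform in $q$, this $\delta_2(q)$ is forced to be of order $1/q$ and shrinks to $0$. But the lemma --- and its applications, e.g.\ the sentence ``the thickness of this neighborhood $\mathcal{N}_q$ essentially does not depend on $q$'' in the proof of Proposition~\ref{nonbumpy}, and the use in the proof of Theorem~\ref{oneinfinity} where one needs every $\phi(\Gamma\times\{s\})$ with $s\neq 0$ to be non-minimal --- requires the \emph{fixed} $\delta_2$ coming from the Type~II/III classification. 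The difficulty for $t$ away from $0$ is that Condition~[M] together with Lemma~\ref{degenerate}(3) only says that each $\Gamma'_t$ has mean curvature of a definite sign \emph{or is identically minimal for $g$}; at such a minimal leaf your Taylor expansion in $t$ gives no information, and you have not shown that your perturbation pushes its mean curvature in the correct direction. The paper's conformal trick avoids this entirely: with $f_q$ constant on each leaf, the transformation law $\mathbf{A}_h=e^\varphi(\mathbf{A}_g+g\,d\varphi(\nu))$ adds to the scalar mean curvature of \emph{every} $\Gamma'_t$ a term $n\,d\varphi(\nu)$ with a strict sign for all $t\neq 0$, so even the leaves that were minimal for $g$ acquire the correct mean curvature, uniformly on $(-\delta_2,\delta_2)\setminus\{0\}$. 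To repair your argument you would need either to replace $\rho$ by the foliation parameter in your perturbation (which essentially leads back to the paper's construction), or to compute the linearisation in $\varepsilon$ of the mean curvature of each fixed leaf $\Gamma'_{t_0}$ and check its sign; neither step is present.
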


\begin{proof}
The two bullets have similar proofs. Let us check (2) for instance. Again for brevity we limit ourselves to the case where $\Gamma$ is 2-sided embeddded in the interior of $(N,g)$. Since the deformations are going to be local, we can suppose $\Gamma$ connected. Using previous notations, since $\Gamma$ is of Type III, there is a diffeomorphism $\phi$ from $\Gamma\times (-\delta_2,\delta_2)$ to a neighborhood $N_\Gamma$ of $\Gamma$ so that each $\phi(\Gamma\times \{s\})$ is either minimal or has mean curvature vector pointing away from $\Gamma$. Consider a sequence of functions $f_q:(-\delta_2,\delta_2)\to [1,2)$ increasing on $(-\delta_2,0)$, decreasing on $(0,\delta_2)$, equal to $1$ at $-\delta_2$ and $\delta_2$. We also impose that $f_q$ converges smoothly to the constant function equal to $1$. Each $f_q$ induces via $\phi$ a function on $N_\Gamma$ that we assume can be extended to a smooth function on $M$ equal to $1$ outside of $N_\Gamma$ (one might need to change a bit $f_q$ around the endpoints $-\delta_2$ and $\delta_2$). Let us still call $f_q$ this function defined on $M$, and consider the metrics $h^{(q)}:=f_q.g$. With respect to $h^{(q)}$, $\Gamma$ is still minimal, and any hypersurface $\phi(\Gamma\times\{s\})$ with $s\neq0$ has now non-zero mean curvature vector pointing away from $\Gamma$. That follows from the following general fact: let $S$ be a 2-sided embedded hypersurface in $(\interior(N),g)$ endowed with a choice of unit normal $\nu$ and let $\varphi :M\to \mathbb{R}$ be a smooth function. Consider the conformal change of metric $h:=\exp(2\varphi) g$. If $\mathbf{A}_g$ (resp. $\mathbf{A}_h$) denotes the second fundamental form of $S$ with respect to $\nu$ for $g$ (resp. $h$), one can check that
$$\mathbf{A}_h(a,b) = \exp(\varphi) (\mathbf{A}_g(a,b) + g(a,b) d\varphi(\nu)).$$
In particular if $S$ has vanishing mean curvature or if its mean curvature vector $\overrightarrow{H}$ satisfies $\langle \overrightarrow{H},\nu \rangle<0$, and if $d\varphi(\nu)>0$ then the mean curvature vector $\overrightarrow{H}_h$ of $S$ with respect to $h$ now satisfies in any case $\langle \overrightarrow{H}_h,\nu \rangle<0$.
To finish the proof of (2), we notice that if $f_q:(-\delta_2,\delta_2) \to [1,2)$ was chosen to have a strictly negative second derivative at $0$, then with respect to $h_q$, $\Gamma$ is an unstable minimal hypersurface.
\end{proof}

\section*{Appendix B: Local min-max constructions in the Almgren-Pitts' setting}


We give a review of the basic definitions from geometric measure theory and some notions of the Almgren and Pitts' theory used in the paper, particularly the local $1$-parameter min-max theory in that setting (it is essentially a mixture of \cite{P} and \cite[Theorem 1.7]{MaNeindexbound}). For a complete presentation, we refer the reader to the book of Pitts \cite{P}, to Section 2 in \cite{MaNeinfinity} and \cite[Section 3]{MaNeindexbound}.

Let $N$ be a compact connected Riemannian $(n+1)$-manifold, assumed to be isometrically embedded in $\mathbb{R}^P$. We work with the space $\mathbf{I}_k(N;\mathbb{Z}_2)$ of $k$-dimensional flat chains with coefficients in $\mathbb{Z}_2$ and with support contained in $N$, the subspace $\mathcal{Z}_k(N;\mathbb{Z}_2) \subset \mathbf{I}_k(N;\mathbb{Z}_2)$ whose elements are boundaries, and with the space $\mathcal{V}_k(N)$ of the closure, in the weak topology, of the set of $k$-dimensional rectifiable varifolds in $\mathbb{R}^P$ with support in $N$.

An integral current $T\in \mathbf{I}_k(N;\mathbb{Z}_2)$ determines an integral varifold $|T|$ and a Radon measure $\|T\|$ (\cite[Chapter 2, 2.1, (18) (e)]{P}). If $V \in \mathcal{V}_k(N)$, denote by $\|V\|$ the associated Radon measure on $N$. Given an $(n+1)$-dimensional rectifiable set $U\subset N$, if the associated rectifiable current is an integral current in $\mathbf{I}_{n+1}(N;\mathbb{Z}_2)$, it will be written as $[|U|]$. To a rectifiable subset $R$ of $N$ corresponds an integral varifold called $|R|$. 
The support of a current or a measure is denoted by $\spt$. 
The notation $\mathbf{M}$ stands for the mass of an element in $\mathbf{I}_k(N;\mathbb{Z}_2)$. On $\mathbf{I}_k(N;\mathbb{Z}_2)$ there is also the flat norm $\mathcal{F}$ which induces the so-called flat topology. 
The space $\mathcal{V}_k(N)$ is endowed with the topology of the weak convergence of varifolds. The mass of a varifold is denoted by $\mathbf{M}$. The $\mathbf{F}$-metric was defined in \cite{P} and induces the varifold weak topology on any subset of $\mathcal{V}_k(N)$ with mass bounded by a constant.

Suppose that $\partial_0 N$ and $\partial_1 N$ are disjoint closed sets (which can be empty), $\partial_0 N \cup \partial_1 N = \partial N$, and $C_0$ (resp. $C_1$) is the cycle in $\mathcal{Z}_n(N;\mathbb{Z}_2)$ which is determined by $\partial_0 N$ (resp. $\partial_1 N$). Note that $C_0=C_1=0$ if $\partial N=\varnothing$. Let $\Phi: [0,1]\to \mathcal{Z}_n(N;\mathbf{F};\mathbb{Z}_2)$ be a continuous map so that $\Phi(0)=C_0$, $\Phi(1) =C_1$. Let $\Pi$ be the class of all continuous maps $\Phi':[0,1]\to \mathcal{Z}_n(N;\mathbf{F};\mathbb{Z}_2)$ homotopic to $\Phi$ in the flat toplogy, with endpoints fixed, i.e. there is a continous map $H:[0,1]\times[0,1] \to \mathcal{Z}_n(N;\mathcal{F};\mathbb{Z}_2)$ so that $H(.,0)=\Phi(.)$, $H(.,1)=\Phi'(.)$, $H(j,s)=C_j$ for $j=0,1$ and all $s\in[0,1]$. Let $\pi^\sharp(C_0,C_1)$  be the family of such homotopy classes $\Pi$ of maps starting at $C_0$ and ending at $C_1$.

In \cite{Alm1}, Almgren describes how to associate to a continuous map $\Phi: [0,1]\to \mathcal{Z}_n(N;\mathbf{F};\mathbb{Z}_2)$ an element of $\mathbf{I}_{n+1}(N,\mathbb{Z}_2)$. Let us explain this construction. There is a number $\mu>0$ such that if $T\in \mathbf{I}_n(N,\mathbb{Z}_2)$ has no boundary and $\mathcal{F}(T)\leq \mu$, then there is an $S_T\in \mathbf{I}_{n+1}(N,\mathbb{Z}_2)$ such that $\partial S_T =T$ and $$\mathbf{M}(S_T)=\mathcal{F}(T)=\inf\{\mathbf{M}(S') ; S'\in \mathbf{I}_{n+1}(N,\mathbb{Z}_2) \text{ and } \partial S' = T \}.$$
Such an $S_T$ is called an $\mathcal{F}$-isoperimetric choice for $T$. Now let $k_0$ be large enough so that for all $i=1,...,k_0$,
$\mathcal{F}(\Phi(\frac{i}{k_0})-\Phi(\frac{i-1}{k_0}))\leq \mu$.
With the previous notation for $\mathcal{F}$-isoperimetric choices, consider the $(n+1)$-dimensional integral current
\begin{equation} \label{Almgren}
\sum_{i=1}^{k_0} S_{\Phi(\frac{i}{k_0})-\Phi(\frac{i-1}{k_0})}.
\end{equation}
This current does not depend of $k_0$ provided it is sufficiently large. 
By the interpolation formula of \cite[Section 6]{Alm1}, this sum is also invariant by homotopies. Hence when $\Phi \in\Pi\in\pi^\sharp(C_0,C_1)$, the map which associates to $\Pi$ the $(n+1)$-dimensional current (\ref{Almgren}), defined with $\Phi$, is well defined. We call this map the Almgren map and we denote it by
$$\mathcal{A} :  \pi^\sharp(C_0,C_1) \to \mathbf{I}_{n+1}(N,\mathbb{Z}_2).$$

Given $\pi^{\sharp}(C_0,C_1)$, consider the function $\mathbf{L}: \pi^{\sharp}(C_0,C_1) \to [0,\infty]$ defined by 
$$\mathbf{L}(\Pi) = \inf\{\sup_{x\in[0,1]} \mathbf{M}(\Phi(x)) ; \Phi \in \Pi\}.$$
A sequence $\{\Phi_i\}_i\subset  \Pi$ is a min-max sequence if
$$\limsup_{i\to \infty} (\sup_{x\in[0,1]} \mathbf{M}(\Phi(x))) = \mathbf{L}(\Pi).$$
The image set of $\{\Phi_i\}_i$ is 
\begin{align*}
\mathbf{\Lambda}(\{\Phi_i\}_i) = & \{ V\in \mathcal{V}_n(N) ; \exists \{i_j\}\to \infty, x_{i_j}\in[0,1],\\
& \text{ such that  } \lim_{j\to\infty}\mathbf{F}(|\Phi_{i_j}(x_{i_j})| ,V) =0 \}.
\end{align*}
If $\{\Phi_i\}_i\subset  \Pi$ is a min-max sequence, define the critical set $\mathbf{C}(S) \subset \mathcal{V}_n(N)$ of $\{\Phi_i\}_i$ as
$$
\mathbf{C}(\{\Phi_i\}_i) = \{V \in \mathbf{\Lambda}(\{\Phi_i\}_i);  \|V\|(N)= \mathbf{L}(\Pi).
$$
A min-max sequence $\{\Phi_i\}_i\subset \Pi$ such that every element of $\mathbf{C}(\{\Phi_i\}_i) $ is stationary is called pulled-tight.

We finally define the width $W$ of $(N,g)$ to be 
\begin{equation} \label{wiidth}
W:=\inf_{\mathcal{A}(\Pi)=[|N|]} \mathbf{L}(\Pi),
\end{equation}
the infimum being taken over all the possible following choices: we start with a partition $\partial N = X_1\cup X_2$ ($X_1$, $X_2$ are closed), $C_i$ is the cycle in $\mathcal{Z}_n(N;\mathbb{Z}_2)$ determined by $X_i$ ($i=1,2$), and $\Pi\in \pi_1^{\sharp}(C_0,C_1)$ satisfies 
$$\mathcal{A}(\Pi)=[|N|].$$
This width should not be confused with the first width $\omega_1(N,g)$ which is defined with relative cycles (see \cite{Gromovnonlinearspectra, Guth, MaNeinfinity, LioMaNe}).

A metric $g$ is said to be bumpy if no smooth immersed closed minimal hypersurface has a non-trivial Jacobi vector field. White showed that bumpy metrics are generic in the Baire sense \cite{Whitebumpy,Whitebumpy2}. The following theorem is essentially a consequence of the index bound of Marques-Neves \cite{MaNeindexbound}.

\begin{theo} \label{discreteminmax}

Let $(N^{n+1},g)$ be a compact manifold with boundary endowed with a bumpy metric $g$, with $2\leq n \leq 6$. Suppose that the boundary $\partial N$ is a strictly stable minimal hypersurface. Then there exists a stationary integral varifold $V$ whose support is a smooth embedded minimal hypersurface $\Sigma \subset N$ of index bounded by one, such that
$$\|V\|(N)= W.$$ 
Moreover one of the components of $\Sigma$ is contained in the interior $\interior(N) $. 
\end{theo}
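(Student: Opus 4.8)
The plan is to apply the Almgren--Pitts min-max machinery in the relative setting adapted to manifolds with (stable minimal) boundary, using the width $W$ defined in (\ref{wiidth}) as the critical value. First I would fix a partition $\partial N = X_1 \cup X_2$ and a homotopy class $\Pi \in \pi_1^\sharp(C_0,C_1)$ with $\mathcal{A}(\Pi) = [|N|]$ achieving (or nearly achieving) the infimum defining $W$; the existence of such a $\Pi$ with $\mathbf{L}(\Pi) = W$ (or approaching it) follows from the definition and a diagonal argument, and one checks $W$ is finite by exhibiting an explicit sweepout of $N$ by level sets of a Morse function. Then I would pull-tight a min-max sequence $\{\Phi_i\} \subset \Pi$ so that every element of the critical set $\mathbf{C}(\{\Phi_i\})$ is stationary; here the key point, exactly as in \cite{P} and \cite{MaNeindexbound}, is that the strict stability of $\partial N$ prevents the min-max varifold from being pushed into or absorbed by the boundary, because $W$ strictly exceeds the $n$-volume of any component of $\partial N$ (by \cite{MorganRos}) and the pull-tight deformation can be arranged to strictly decrease mass near $\partial N$. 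This gives a stationary integral varifold $V$ with $\|V\|(N) = W$.

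Next I would invoke regularity: since the metric is bumpy and $2 \le n \le 6$, the combinatorial/interpolation argument of Almgren--Pitts together with the regularity theory of Schoen--Simon (and the sweepout being $1$-parameter) shows $\spt(V)$ is a smooth closed embedded minimal hypersurface $\Sigma$, with possible multiplicities; strict stability of $\partial N$ forces any boundary-adjacent behavior to be controlled, so $\Sigma$ has no component forced to coincide with $\partial N$ with the wrong orientation. For the index bound, I would apply the index estimate of Marques--Neves \cite{MaNeindexbound} (their Deformation Theorems A, B, C, which in the bumpy case directly yield $\mathrm{index}(\Sigma) \le 1$ for a $1$-parameter min-max, since the number of parameters is one): if $\Sigma$ were $2$-unstable, one could deform the sweepouts to push their mass strictly below $W$, contradicting the definition of $W$ as an infimum over the homotopy class. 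This is the standard argument from \cite[Theorem 1.7]{MaNeindexbound}, localized to the compact manifold with stable minimal boundary.

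Finally, for the assertion that at least one component of $\Sigma$ lies in the interior $\interior(N)$: this follows from the condition $\mathcal{A}(\Pi) = [|N|]$, which guarantees the sweepout genuinely sweeps across the full $(n+1)$-dimensional volume of $N$. If every component of $\Sigma$ were contained in $\partial N$, then $\Sigma$ would be a union of boundary components, each of $n$-volume strictly less than $W$ (again by \cite{MorganRos}), and with the appropriate multiplicities the total mass could not equal $W$ unless the multiplicities conspire — but then one checks that such a varifold cannot arise as the min-max limit of a sweepout realizing $[|N|]$, because a sweepout that degenerates entirely onto $\partial N$ would have an Almgren image that is homologically trivial rel boundary, contradicting $\mathcal{A}(\Pi) = [|N|]$. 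I expect the main obstacle to be the careful treatment of the pull-tight step and the index bound near the boundary $\partial N$: one must ensure that the deformations from \cite{MaNeindexbound} can be carried out while keeping the endpoints fixed at $C_0, C_1$ and without creating mass concentration along $\partial N$, which is precisely where strict stability (as opposed to mere stability) of $\partial N$ is used. The rest is a faithful transcription of the now-standard Almgren--Pitts--Marques--Neves argument to this relative, bounded setting, as announced in Appendix B.
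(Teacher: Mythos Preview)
Your overall strategy---use \cite{MorganRos} to get $W>\max(\mathbf{M}(C_0),\mathbf{M}(C_1))$, then pull-tight and apply the Marques--Neves index bound \cite[Theorem~1.7]{MaNeindexbound}---matches the paper's. Two points, however, diverge from the paper and one of them is a genuine gap.

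\textbf{The interior component.} Your argument that some component of $\Sigma$ lies in $\interior(N)$ is not correct. You claim that if $\spt(V)\subset\partial N$ then ``a sweepout that degenerates entirely onto $\partial N$ would have an Almgren image that is homologically trivial rel boundary, contradicting $\mathcal{A}(\Pi)=[|N|]$.'' But the sweepout does not degenerate onto $\partial N$; only the min-max \emph{varifold} is supported there. A sweepout with $\mathcal{A}(\Pi)=[|N|]$ can perfectly well have its maximal-mass slices converging to a multiple of a boundary component---there is no homological obstruction, and your mass-counting remark (``unless the multiplicities conspire'') is not an argument. The paper's route is different and cleaner: the output of \cite[Theorem~1.7]{MaNeindexbound} (combined with \cite{MaNe,Zhou}) is finer than ``index $\le 1$''---it gives a component of $\spt(V)$ that is either $2$-sided with index \emph{equal to one}, or $1$-sided. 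Since each component of $\partial N$ is $2$-sided and strictly stable, such a component must lie in $\interior(N)$.

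\textbf{Passing from $\mathbf{L}(\Pi)$ to $W$.} You acknowledge that $\Pi$ may only ``nearly achieve'' $W$, but then proceed as if $\mathbf{L}(\Pi)=W$. The paper instead runs the argument for a sequence $\Pi_k$ with $\mathbf{L}(\Pi_k)\to W$, obtains varifolds $V_k$ each with an interior component as above, and passes to the limit via \cite{Sharp}. The interior component survives in the limit because strict stability of $\partial N$ (and the maximum principle) prevents a sequence of closed minimal hypersurfaces in $\interior(N)$ from collapsing onto a component of $\partial N$. This limiting step is where strict stability is actually used, not in the pull-tight or deformation-theorem arguments as you anticipated. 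Relatedly, the paper sidesteps boundary issues in the min-max itself by first embedding $(N,g)$ isometrically in a slightly larger $(\tilde{N},\tilde{g})$ with strictly mean convex boundary and a mean convex foliation of $\tilde{N}\setminus N$, so that all closed minimal hypersurfaces in $\tilde{N}$ automatically lie in $N$; this reduces the min-max to a standard setting.
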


\begin{proof}
We consider $(N,g)$ as isometrically embedded inside $(\tilde{N}, \tilde{g})$ such that $\partial \tilde{N}$ is strictly mean convex (the mean curvature vector points inwards), and $\tilde{N}\backslash N$ is foliated by strictly mean convex hypersurfaces, so that any closed minimal hypersurface embedded in $\tilde{N}$ is embedded in $N$.

Suppose that $\partial_0 N,\partial_1 N \subset N$ are disjoint closed sets, $\partial_0 N \cup \partial_1 N = \partial N$, and $C_0$ (resp. $C_1$) is the cycle in $\mathcal{Z}_n(N;\mathbb{Z}_2)$ which is determined by $\partial_0 N$ (resp. $\partial_1 N$). 

By \cite{MorganRos}, any homotopy class $\Pi \in \pi^\sharp (C_0,C_1)$ satisfies
$$ \mathbf{L}(\Pi) >\max (\mathbf{M}(C_0) , \mathbf{M}(C_1)\}.$$

Hence combining \cite[Theorem 1.7]{MaNeindexbound} and the arguments of \cite[Theorem 2.1]{MaNe} (see also \cite[Theorem 2.7]{Zhou}),
we get $V$ as in the statement but with $\|V\|(N)= \mathbf{L}(\Pi)$, and a connected component of $\spt(V)$ which either is 2-sided and has index one, or is 1-sided. This component has to be embedded in the interior of $N$. 

If we apply the previous discussion to a sequence of homotopy classes $\Pi_k$ such that $\lim_{k\to \infty} \mathbf{L}(\Pi_k)=W$, we get a sequence of varifolds $V_k$ which subsequently converge to $V$ (\cite{Sharp}) as in the statement. A connected component of $\spt(V)$ is in $\interior(N)$ because by the maximum principle, a sequence of minimal hypersurfaces in $\interior(N)$ cannot converge in the Gromov-Hausdorff topology to some components of $\partial N$, which is strictly stable.

\end{proof}

\section*{Appendix C: Facts about mean curvature flow}

We find it convenient to use the level set flow formulation of the mean curvature flow. Under the level set flow, also called ``biggest flow'', introduced by Chen-Giga-Goto \cite{CGG} and Evans-Spruck \cite{ES}, any compact set $K$ in $\mathbb{R}^{n+1}$ canonically generates a one-parameter family of compact sets $\{K_t\}_{t\geq0}$ with $K_0=K$. The set $K$ is said to be \emph{weakly} (resp. \emph{strictly}) \emph{mean convex} if $K_t\subset K$ (resp. $K_t\subset \interior(K)$) for all $t>0$. If a weakly mean convex set $K$ has $C^{1,1}$ boundary and one component $S$ of $\partial K$ is such that after any small positive time, it completely enters inside the interior of $K$, we say that $S$ is \emph{strictly mean convex}. Recall that $C^{1,1}$ hypersurfaces become instantaneously smooth under the level set flow in small times \cite[Section 3]{Whitetopology}. Moreover such a boundary component $S$ is either smooth minimal, or strictly mean convex by \cite[Theorem 8.2]{ES}.

When $X:=\partial K$ happens to be smooth, then $K$ is weakly (resp. strictly) mean convex in the previous sense if and only if the mean curvature of $X$ is nonnegative (resp. the mean curvature is nonnegative and positive somewhere). See for instance \cite{WhiteregMCF,Whitetopology}.

When $K$ is strictly mean convex, the level set flow is given by the level sets of a Lipschitz function $u$ satisfying in the viscosity sense
$$-1=|\nabla u|\divergence(\frac{\nabla u}{|\nabla u|}),$$
and $\partial K_t = \{x\in\mathbb{R}^{n+1} ; u(x)=t\}$.
In that case, there is a unique Lipschitz function $u$ giving a viscosity solution of the level set flow \cite[Theorem 7.4]{ES}, \cite{CGG} (see also \cite{CMarrivaltime}). The flow is non-fattening (the level sets have no interior) \cite[Corollary 3.3]{WhiteregMCF}. The singular set at each time $t>0$ has Hausdorff dimension at most $n-1$, in particular at all positive time $t$, $\partial K_t$ is a closed hypersurface smooth outside a subset of dimension $n-1$ \cite{WhiteregMCF} (see also \cite{CMsingularsetMCF}). Moreover for all $T>0$, $\{\partial K_t\}_{t\in [0,T)}$ forms a possibly singular mean convex foliation of $K\backslash K_T$ \cite{WhiteregMCF}. By monotonicity, the $n$-dimensional Hausdorff measure (that we usually denote by $\Vol_n$) of the set $\partial K_t$ is nonincreasing in $t$ \cite[Corollary 3.6]{WhiteregMCF}.

The previous facts extend naturally to closed ambient manifolds $(N^{n+1},g)$, $2\leq n\leq 6$, \cite{IlmanenmanifoldMCF,WhiteregMCF}. The long term behavior can be different: in general a closed mean convex non-minimal $n$-dimensional hypersurface $X_0\subset N$ evolving under the level set flow will sweep out an $(n+1)$-dimensional manifold with boundary $Y\subset N$, and $\partial Y = X_0\cup X_\infty$, where $X_\infty$ is a (possibly empty) closed smooth embedded stable minimal hypersurface. If $\{K_t\}_{t\in [0,\infty)}$ denotes the corresponding mean convex level set flow such that $K_0= Y$, each $K_t$ can be considered as an integral current in $\mathbf{I}_{n+1}(N;\mathbb{Z}_2)$ and $\{\partial K_t\}$ can be considered as a family of cycles in $\mathcal{Z}_n(N;\mathbb{Z}_2)$ continuous in the flat topology: this fact can be checked using the regularity property \cite[Theorem 5.1]{WhiteregMCF} and the coarea formula. Define $X_t$ to be $\partial K_t\backslash X_\infty$; the support of $\partial K_t$ as a cycle in $\mathcal{Z}_n(N;\mathbb{Z}_2)$ is contained in the set $X_t \cup X_\infty$.

In the situation above, the convergence of $X_t $ to $X_\infty$ is smooth and one-sheeted (resp. two-sheeted) for 2-sided (resp. 1-sided) components of $X_\infty$ \cite[Section 11]{WhiteregMCF}. By the avoidance principle (or maximum principle, see \cite[Lemma 6.3]{IlmanenmanifoldMCF}), if $K$ is a mean convex closed set generating $\{K_t\}_{t\geq0}$ then any strictly mean concave closed set $W\subset K$ satisfies $$W\subset \interior(\cap_{t\geq0}K_t).$$

\bibliographystyle{plain}
\bibliography{biblio19_08_30}

\end{document}